	\newcommand\myTime{\now}
 \theoremstyle{plain}
 \newtheorem{thm}{Theorem}[section]
 \newtheorem{prop}[thm]{Proposition} %%Delete [thm] to re-start numbering
 \newtheorem{lem}[thm]{Lemma} %%Delete [thm] to re-start numbering
 \theoremstyle{definition}
 \newtheorem{defn}[thm]{Definition}
 \theoremstyle{remark}
 \newtheorem{rem}[thm]{Remark}
\newcommand {\lpair} {\langle}
\newcommand {\rpair} {\rangle}
\newcommand{\eps}{\varepsilon} %small epsilon
\newcommand{\R}{\mathbb{R}}    %reals
\newcommand{\N}{\mathbb{N}}    %natural numbers
\newcommand{\J}{\mathbf{J}}    %momentum map
\newcommand{\tr}{\operatorname{tr}\,}
\newcommand{\Id}{\text{Id}}    % Identity
\DeclareMathOperator{\Ker}{Ker}  %Kernel
\newcommand{\restr}[1]{\,\vrule height3ex width.4pt depth1.4ex\lower1.4ex\hbox{\scriptsize $\,#1\,$}}
\newcommand{\rrestr}[1]{\,\vrule height2ex width.4pt depth0.9ex\lower0.9ex\hbox{\scriptsize $\,#1\,$}}
\newcommand{\ed}{\mathbf{d}}   % exterior diferential
\newcommand{\Ad}{\mathrm{Ad}}  % Adjoint action
\newcommand{\ad}{\mathrm{ad}}  % adjoint action
\newcommand{\D}{\mathbf{D}}    % capital derivative
\newcommand {\g}{\mathfrak{g}}
\newcommand {\h}{\mathfrak{h}}
\newcommand {\q}{\mathfrak{q}}% lie algebra q
\newcommand {\p}{\mathfrak{p}}
\renewcommand {\q}{\mathfrak{q}}
\renewcommand {\k}{\mathfrak{k}}
\renewcommand {\l}{\mathfrak{l}}
\newcommand{\s}{\mathfrak{s}}
\newcommand{\m}{\mathfrak{m}}
\newcommand{\n}{\mathfrak{n}}
\renewcommand{\o}{\mathfrak{o}}
\newcommand{\Orb}{\mathcal{O}}  %orbit
\title{The Hamiltonian Tube Of A\\ Cotangent-Lifted Action}
\author{Miguel Rodríguez-Olmos\thanks{miguel.rodriguez.olmos@ma4.upc.edu}, Miguel Teixidó-Román\thanks{miguel.teixido@ma4.upc.edu}}
\date{Department of Applied Mathematics IV\\ Technical University of Catalonia. Barcelona, Spain.}
\begin{document}
\maketitle

\begin{abstract}

The Marle-Guillemin-Sternberg (MGS) form is local model for a neighborhood of an orbit of a Hamiltonian Lie group action on a symplectic manifold. One of the main features of the MGS form is that it puts simultaneously in normal form the existing symplectic structure and  momentum map. The main drawback of the MGS form is that it does not have an explicit expression. We will obtain a MGS form for cotangent- lifted actions on cotangent bundles that, in addition to its defining features, respects the additional fibered structure present. This model generalizes previous results obtained by T. Schmah for orbits with fully-isotropic momentum. In addition, our construction is explicit up to the integration of a differential equation on $G$. This equation can be easily solved for the groups $SO(3)$ or $SL(2)$, thus giving explicit symplectic coordinates for arbitrary canonical actions of these groups on any cotangent bundle.

\vspace{4mm}
\noindent MSC 2010: 53D20; 70H33; 37J15

\end{abstract}

\tableofcontents

\section{Introduction}
The study of the local geometry of symplectic manifolds equipped with Hamiltonian group actions constitutes a field originated with the classical papers \cite{MR859857,guillemin1984normal}.
In these references the authors obtain a universal model for a tubular neighborhood of the orbit of a point under a Hamiltonian action which puts in normal form both the symplectic structure and the momentum map (Theorem \ref{thm:classicalMGS}). This model is known as the \emph{Hamiltonian tube} or \emph{Marle-Guillemin-Sternberg form} and it is the base of almost all the local studies concerning Hamiltonian actions of Lie groups on symplectic manifolds. In fact, since the decade of the 80's almost all the relevant results about the qualitative local dynamics of equivariant Hamiltonian flows have been obtained using techniques based on the Hamiltonian tube, for example \cite{MR1660367,MR1150395,montaldi1997persistence,MR1656378,MR946385,MR1958531,MR1614032,MR1690200,patrick1995relative,MR1483562,MR1885680,simo1991stability}. Moreover, the Hamiltonian tube was a major ingredient for many of the generalizations of the Marsden-Weinsten reduction scheme to singular actions like \cite{MR1486529,ortegaratiu,MR1127479,
MR1809496}.

In this paper we will focus on the cotangent bundle case. Let $T^* Q$ be a cotangent bundle equipped with its canonical symplectic structure $\omega_Q$ and let $G$ be a Lie group that acts smoothly on $Q$. The canonical lift of this action to $T^*Q$ is automatically a Hamiltonian action.
The Marle-Guillemin-Sternberg construction (Theorem \ref{thm:classicalMGS}) applied to a canonical action over a cotangent bundle gives, as for every Hamiltonian action, an equivariant local model of  $(T^*Q,\omega_Q)$
that puts in normal form both the symplectic structure and the momentum map. However, in general this model does not respect the fibration $T^*Q\to Q$. Moreover, the map given by Theorem \ref{thm:classicalMGS} is not constructive and only some of its properties are known.  In the concrete case of cotangent bundles there is a strong motivation coming from  geometric mechanics and geometric quantization that makes desirable to obtain explicit or fibrated local models.
In this paper we obtain a construction of the Hamiltonian tube for a canonical cotangent-lifted action in a cotangent bundle specially adapted to this kind of manifolds and that puts the fibration in a normal form (Theorem \ref{thm:tubegeneral}).
In other words, this assumes that the space that models locally the neighborhood of an orbit of the group in $T^*Q$ has a fibered structure $\tau:Y\rightarrow Q$
where  $U$ is a local model of the base $Q$ such that $$\mathcal{T}: Y\rightarrow T^*Q$$ is a fibered map.
Additionally, the construction of $\mathcal{T}$ will be explicit up to the integration of a differential equation on $G$. The restricted $G$-tubes (Definition \ref{def:restrictedtube}) will be the basic building blocks and are the only non-explicit part of the model. Given a Hamiltonian action the restricted $G$-tube depends only on the group $G$ and its algebraic structure. For example, for $SO(3)$ and $SL(2)$ the expressions of their restricted $G$-tubes can be obtained explicitly, see Section \ref{sec:explicitexamples}.
 For larger groups the computation will be more cumbersome but could be done with a computer algebra system. As an additional result of our construction  we obtain a fibered analogue of the  Lerman-Bates lemma \cite{MR1486529} (Proposition \ref{prop:Bates-Lerman}) that characterizes the set of points with prescribed momentum  in a neighborhood of the form $\tau^{-1}(U)$. We believe that this result can be used to study in detail the structure of singular reduction for cotangent bundles, generalizing the results of \cite{MR2271206} to non-zero momentum. This will be addressed elsewhere.

The first works studying symplectic normal forms in the specific case of cotangent bundles seem to have been \cite{phdschmah, MR2293645}. In these references T. Schmah found a Hamiltonian tube around those points $z\in T^*Q$ such that its momentum $\mu=\J(z)$ is fully-isotropic (that is, $G_\mu=G$ with respect to the coadjoint representation). One of the main differences between her construction and the classical MGS model for symplectic actions is that the one for cotangent bundles was constructive, unlike the general MGS model. The next step came with \cite{MR2421706} where it is provided a general descripion of the symplectic slice of a cotangent bundle, without the assumption $G=G_\mu$. Recently \cite{1311.7447} constructed Hamiltonian tubes for free actions of a Lie group $G$ and showed that this construction can be made explicit for $G=SO(3)$.

This paper is organized as follows: in Section \ref{sec:preliminaries} we review some background material regarding proper actions and the classical MGS model. In Section \ref{sec:symplslice} we introduce in Proposition \ref{prop:liealgsplitting} a splitting of the Lie algebra that will be needed in all the subsequent development. This splitting already appeared partially in \cite{MR2421706}  (Theorem \ref{thm:symplnorm-cotang}). In Section \ref{sec:gtubes} we introduce simple and restricted $G$-tubes (Definitions \ref{def:simpletube} and \ref{def:restrictedtube}). Simple $G$-tubes  are, up to technical details, MGS models for the lift of the left action of $G$ on itself to $T^*G$. Their existence is proved in Proposition \ref{prop:simpleGtube}.
 Restricted $G$-tubes are defined implicitly in terms of a simple $G$-tube (Proposition \ref{prop:restrtubes}) and are the technical tool that we will need later to construct the general Hamiltonian tube.

In Section \ref{sec:cotangenttubes} we construct the general Hamiltonian tube for a cotangent-lifted action in such a way that it is explicit up to a restricted $G$-tube. This general tube will be the composition of two maps. We will first construct a Hamiltonian tube around points in $T^*Q$ with certain maximal isotropy properties (Theorem \ref{thm:tube0}) and then an adaptation of the ideas of \cite{MR2293645} will be used to construct a $\Gamma$ map (Proposition \ref{prop:Gamma}). Together these two maps will give the general Hamiltonian tube in Theorem \ref{thm:tubegeneral}.
In Section \ref{sec:Bates-Lerman} we use a zero section-centered tube to construct a cotangent-bundle version of a result due to Bates and Lerman in Proposition \ref{prop:fiberedBL}. One important novelty of this lemma is that our version is global in the vertical direction in the sense that given $z\in \J^{-1}(\mu)$ we can describe, via a Hamiltonian tube, not only a set of the form $\J^{-1}(\mu)\cap U$ where $U$ is a neighborhood of $z$, but a set $\tau^{-1}(\tau(U))\cap \J^{-1}(\mu)$ with $U$ is a neighborhood of $z$. That is, with one Hamiltonian tube we can describe all the points in $\J^{-1}(\mu)$ whose projection is close enough to the projection of the center point.
Finally, in Section \ref{sec:explicitexamples} we present explicit examples of $G$-tubes for both the groups $SO(3)$ (where we recover the results of \cite{1311.7447}) and $SL(2,\R)$. In Subsection \ref{sec:so3onr3} we present an explicit Hamiltonian tube for the natural action of $SO(3)$ on $T^*\R^3$ which generalizes the final example of \cite{MR2293645} to the case $\mu\neq 0$.

\paragraph{Acknowledgements.}
The authors would like to acknowledge the financial support of the Ministerio de Ciencia e Innovaci\'on (Spain), project
MTM2011-22585 and AGAUR, project 2009 SGR:1338. M. Teixid\'o-Roma\'an also thanks the support of a FI-Agaur PhD Fellowship. M. Rodr\'{\i}guez-Olmos thanks the support of the EU-ERG grant ``SILGA".

\section{Preliminaries}
\label{sec:preliminaries}
This section collects  background material from the theory of Hamiltonian actions and MGS normal forms that will be used through this paper. Most material is standard and can be found in greater detail in several references, for instance \cite{ortegaratiu}.
\subsection{Proper actions and slices}

Let $G$ be a Lie group with Lie algebra $\g$. We will always denote by $e$ the identity element of the group and by $L_g,R_g;G\to G$ the left and right multiplications by $g$ respectively. If $G$ acts on $M$ we say that $M$ is a $G$-space. For $p\in M$ the \emph{isotropy subgroup of $p$} is $$G_p=\{g\in G\  |\  g\cdot p = p\}.$$ A map $f:M_1\to M_2$ between two manifolds endowed with $G$-actions is called \emph{$G$-equivariant} if $f(g\cdot p)=g\cdot f(p)$ for all $p\in M_1$ and $g\in G$.

An action is \emph{proper} if the map $G\times M\to M\times M$ defined by $(g,z)\mapsto (z,g\cdot z)$ is a proper map. For a proper $G$-action all the isotropy subgroups $G_z$ are compact subgroups of $G$. For any Lie group $G$ the left and right actions on itself are proper.

\label{sec:twisted}

If a compact subgroup $H\subset G$ acts on a manifold $A$ then on $G\times A$ we can consider two actions:
\begin{itemize}
 \item \emph{twisting action} of $H$: $h\cdot^T (g,a)=(gh^{-1},h\cdot a),\quad h\in H$
 \item \emph{left action} of $G$: $h\cdot^L (g,a)=(hg,a),\quad h\in G$.
\end{itemize}
If necessary we will use as above the superindexes $T$ or $L$ to indicate the $H^T$-action (twisting) or the $G^L$-action (left) on the product $G\times A$. As both actions commute  $G\times A$ supports an action of the direct product group $G^L\times H^T$.

The twisting action is free and proper and, therefore, the quotient space $(G\times A)/H^T$ is a manifold and it will be called the \emph{twisted product}. We will denote it as $G\times_H A$ and its elements will be denoted as $[g,a]_H\quad g\in G,\ a\in A$. The twisted product $G\times_H A$ admits a proper $G$-action given by $g\cdot[g',a]_H=[gg',a]_H$. In fact the twisted product is exactly the associated bundle for the principal $H$-bundle $G\to G/H$ and the $H$-manifold $A$.

The Tube Theorem \cite{MR0126506} shows that in fact every proper $G$-space is locally a twisted product. That is, if $G$ acts properly on $M$, $z\in M$ and $A$ is an $G_z$-invariant complement
 of $g\cdot z$ in $T_zM$ then there is a $G$-equivariant diffeomorphism
\begin{equation}
\mathbf{s}:G\times_{G_z} A \longrightarrow U\subset  M \label{eq:Palais}
\end{equation}
defined on a neighborhood of the zero section of the twisted product satisfying $\mathbf{s}([e,0]_{G_z})=z$ where $U$ is a $G$-invariant neighborhood of $z$.

A $G_z$-invariant complement of $\g\cdot z$ in $T_zM$ will be called a \emph{linear slice} for the $G$ action at $z\in M$.

\subsection{Hamiltonian actions and Hamiltonian tubes}
Assume  now that $G$ acts symplectically on a symplectic manifold $(M,\omega)$. A momentum map is a function $\J:M\to \g^*$ such that $$i_{\xi_M}\omega=\ed\langle\mathbf J(\cdot),\xi\rangle \quad \forall \xi \in \g$$ where $\xi_M\in \mathfrak{X}(M)$ is the fundamental vector field associated with $\xi\in \g$. If $\J$ is equivariant with respect to the coadjoint action on $\g^*$ then we will say that the action is \emph{Hamiltonian}.
If $G$ acts Hamiltonially on a symplectic manifold $(M,\omega)$ there is a symplectic version of the Tube Theorem for proper $G$-spaces, and this is precisely the content of the \emph{Marle-Guillemin-Sternberg normal form} proven by Marle, Guillemin and Sternberg in \cite{MR859857,guillemin1984normal} for compact groups and extended to proper actions of arbitrary groups in \cite{MR1486529}.

\begin{thm}[Hamiltonian Tube Theorem]
Let $(M,\omega)$ be a symplectic manifold endowed with a proper Hamiltonian action of a  Lie group $G$ with momentum map $\J:M\rightarrow \g^*$. Let $z\in M$,
 $\mu=\J(z)$, and choose a $G_z$-invariant splitting $\g_\mu=\g_z\oplus \m$. Let $N$ be a $G_z$-invariant  complement of $\g_\mu\cdot z$ in $\operatorname{Ker} T_z\J$ and $\J_N:N\rightarrow \g_z^*$ defined by $\langle\J_N(v),\xi\rangle:=\frac{1}{2}\omega(\xi\cdot v,v)$. Consider the set $Y:=G\times_{G_z}(\m^*\times N)$ equipped with the two-form
\begin{equation}
 \begin{split}
  \Omega_Y(T_{(g,\nu,v)}\pi_{{G_z}}(u_1),T_{(g,\nu,v)}\pi_{{G_z}}(u_2))=
  \langle \dot \nu_2+T_{v}\J_N(\dot v_2),\xi_1\rangle
  -\langle \dot \nu_1+T_{v}\J_N(\dot v_1),\xi_2\rangle+\\
  +\langle \nu+\J_N(v)+\mu,[\xi_1,\xi_2]\rangle+\omega(\dot v_1,\dot v_2)
 \end{split}
 \label{eq:MGSsymp}
\end{equation}

where $u_i=(T_eL_g\xi_i;\dot \nu_i,\dot v_i) \in T_{(g,\nu,v)}(G\times \m^*\times N)$ and $\pi_{G_z}:G\times(\m^*\times N)\to G\times_{G_z}(\m^*\times N)$. There is a neighborhood $Y_r$ of the zero section of $Y$ such that the restriction $(Y_r,\Omega_{Y})$ is a symplectic manifold equipped with a Hamiltonian action of $G$ (seen as a twisted product) for which the momentum map is
\begin{equation}\label{eq:MGS-momentum}
\J_{Y}[g,\nu,v]_{G_z} = \Ad_{g^{-1}}^*(\mu+\nu+\J_N(v)).
\end{equation}

Additionally, there is a map
\[\mathcal{T}: Y_r \longrightarrow M\]
such that:
\begin{itemize}
  \item $\mathcal{T}: Y_r\longrightarrow \mathcal{T}(Y_r)\subset M$ is a $G$-equivariant diffeomorphism with $\mathcal{T}([e,0,0]_{G_z})=z$.
  \item $\mathcal{T}^* \omega=\Omega_{Y}$.
\end{itemize}

The pair $(Y_r,\Omega_Y)$ is called the \emph{MGS model at $z\in M$}, the $G$-equivariant symplectomorphism $\mathcal{T}$ is called a \emph{Hamiltonian tube around $z$} and the space $N$ a \emph{symplectic slice} at $z$.
\label{thm:classicalMGS}
\end{thm}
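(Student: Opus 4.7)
The plan is to prove the theorem in two stages following the classical Marle--Guillemin--Sternberg strategy: first construct an initial $G$-equivariant diffeomorphism from $Y_r$ onto a neighborhood of the orbit $G\cdot z$ in $M$ via the Palais slice theorem, and then deform it by an equivariant Moser trick so that the resulting map $\mathcal{T}$ simultaneously pulls back $\omega$ to $\Omega_Y$ and $\J$ to $\J_Y$.

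For the first stage I would choose a $G_z$-invariant linear slice at $z$ adapted to the symplectic and momentum data. Given the splitting $\g_\mu=\g_z\oplus \m$ I would pick a $G_z$-invariant complement $\mathfrak{c}$ of $\g_\mu$ in $\g$. The orbit tangent space decomposes as $\g\cdot z=\g_z\cdot z\oplus\m\cdot z\oplus\mathfrak{c}\cdot z$, and a standard computation using $\ed\J$ and the $G$-equivariance of $\J$ identifies $\mathfrak{c}\cdot z$ with $\Ker\,T_z\J^{\,\perp_\omega}\cap (\g\cdot z)$ and shows that $\m^*$ pairs non-degenerately with $\mathfrak{c}\cdot z$ via $\omega_z$. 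Consequently $\m^*\oplus N$ can be mapped linearly and $G_z$-equivariantly onto a $G_z$-invariant complement $A$ of $\g\cdot z$ in $T_zM$ by sending $(\nu,v)$ to $\omega_z^\sharp(\nu)+v$, where $\omega_z^\sharp(\nu)$ is the unique element of a chosen complement of $\g\cdot z$ in $(\g_z\cdot z)^{\omega}$ pairing with $\mathfrak{c}\cdot z$ as prescribed by $\nu$. Composing this linear identification with the Palais tube of (\ref{eq:Palais}) yields a $G$-equivariant diffeomorphism $\mathbf{s}: Y_r\to U\subset M$ with $\mathbf{s}([e,0,0]_{G_z})=z$.

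At this point both $\mathbf{s}^*\omega$ and $\Omega_Y$ are $G$-invariant closed two-forms on $Y_r$, and $\mathbf{s}^*\J$ and $\J_Y$ are $G$-equivariant maps to $\g^*$. The choice of $A$ ensures that $\mathbf{s}^*\omega$ and $\Omega_Y$ agree pointwise on the zero section $G\cdot[e,0,0]_{G_z}$, and similarly $\mathbf{s}^*\J=\J_Y$ on the zero section. I would then invoke an equivariant relative Moser/Weinstein argument: interpolate $\omega_t=(1-t)\Omega_Y+t\,\mathbf{s}^*\omega$, write the difference as $\mathbf{s}^*\omega-\Omega_Y=\ed\alpha$ by a relative Poincar\'e lemma with $\alpha$ vanishing on the zero section, average $\alpha$ over the compact group $G_z$ (and extend $G$-invariantly via the twisted product structure) so it becomes $G$-invariant, solve $i_{X_t}\omega_t=-\alpha$, and integrate the flow to obtain a $G$-equivariant diffeomorphism $\Phi$ fixing the zero section with $\Phi^*(\mathbf{s}^*\omega)=\Omega_Y$. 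Setting $\mathcal{T}=\mathbf{s}\circ\Phi$ gives the desired symplectomorphism.

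The main obstacle is verifying that $\mathcal{T}$ intertwines momentum maps, i.e.\ $\mathcal{T}^*\J=\J_Y$. Once $\mathcal{T}^*\omega=\Omega_Y$ is established, both $\mathcal{T}^*\J$ and $\J_Y$ are momentum maps for the same Hamiltonian $G$-action on $(Y_r,\Omega_Y)$, so their difference is a $G$-invariant locally constant $\g^*$-valued function. Because they coincide on the zero section and $Y_r$ is $G$-equivariantly a bundle over this section (so each connected component meets it), the difference vanishes identically. The remaining technical content is the explicit verification that the formula (\ref{eq:MGSsymp}) defines a closed, $G_z$-basic (hence descending to $Y$) two-form, non-degenerate in a neighborhood of the zero section, and that (\ref{eq:MGS-momentum}) satisfies $i_{\xi_Y}\Omega_Y=\ed\langle\J_Y,\xi\rangle$; this is done by direct computation on representatives in $G\times(\m^*\times N)$ using the $G_z$-invariant splittings, the equivariance of $\J_N$, and the identity $[\xi_Y,\eta_Y]=-[\xi,\eta]_Y$.
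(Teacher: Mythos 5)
A preliminary remark: the paper does not actually prove Theorem \ref{thm:classicalMGS}; it quotes it from \cite{MR859857,guillemin1984normal,MR1486529}, and the closest in-paper material is the reduced-space description of $(Y_r,\Omega_Y)$ in Section \ref{sec:reduced-MGS} together with the equivariant Moser argument of Proposition \ref{prop:simpleGtube}. Your overall strategy --- a Palais tube built on a Witt--Artin type linear slice at $z$, an equivariant relative Moser deformation, and the rigidity argument that two momentum maps for the same action and form differ by a locally constant function which vanishes because the components of $Y_r$ meet the zero section --- is exactly the classical route of those references, and your second and third stages are sound in outline.

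There is, however, a genuine error in the step that the whole argument hinges on, namely the construction of the slice $A$ and the identification $\m^*\oplus N\cong A$. With $\mathfrak{c}$ a $G_z$-invariant complement of $\g_\mu$ in $\g$, one has $\omega_z(\xi\cdot z,\eta\cdot z)=\langle\mu,[\xi,\eta]\rangle$, so $\mathfrak{c}\cdot z$ is an $\omega_z$-\emph{symplectic} subspace (this is the form $\Omega^\mu$ of Proposition \ref{prop:liealgsplitting}, nondegenerate transverse to $\g_\mu$); it does not pair with $\m^*$, and indeed $\dim(\mathfrak{c}\cdot z)=\dim\g-\dim\g_\mu$ while $\dim\m^*=\dim\g_\mu-\dim\g_z$, so the claimed nondegenerate pairing is impossible in general. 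Likewise, since $\Ker T_z\J$ is the $\omega_z$-orthogonal of $\g\cdot z$, its $\omega_z$-orthogonal is $\g\cdot z$ itself, so your set ``$(\Ker T_z\J)^{\perp_\omega}\cap(\g\cdot z)$'' is all of $\g\cdot z$, not $\mathfrak{c}\cdot z$; and $(\g_z\cdot z)^{\omega}=T_zM$ because $\g_z\cdot z=0$, so the receptacle in which you choose $\omega_z^\sharp(\nu)$ is not pinned down. The correct statement is that $\g_\mu\cdot z\cong\m$ is isotropic, $\omega_z$-orthogonal to $\mathfrak{c}\cdot z$ and to $N$, and pairs nondegenerately with a $G_z$-invariant isotropic complement $W$ chosen inside the $\omega_z$-orthogonal of $\mathfrak{c}\cdot z\oplus N$ (Lemma 7.1.2 of \cite{ortegaratiu}), giving $W\cong\m^*$ via $w\mapsto\omega_z(\,\cdot\,\cdot z,w)$ on $\m$; only with this choice does $\mathbf{s}^*\omega$ agree with $\Omega_Y$ along the zero section, which is precisely the hypothesis your relative Moser step needs, so as written that step has nothing to start from. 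Once this is repaired, two further points you defer can be handled more cleanly than by brute force: closedness and nondegeneracy of $\Omega_Y$ near the zero section and the fact that \eqref{eq:MGS-momentum} is a momentum map all follow from the identification \eqref{eq:Lmap} of $(Y_r,\Omega_Y)$ with the Marsden--Weinstein reduction of $(G\times(\g_\mu^*)_r\times N,\omega_{T_\mu}+\omega_N)$ at zero momentum of the twisting action; and in the Moser step no averaging-plus-extension is really needed, because the fiberwise radial retraction of $G\times_{G_z}(\m^*\times N)$ onto its zero section commutes with the full $G$-action, so the primitive produced by its homotopy operator is automatically $G$-invariant.
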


Note that whereas in the Palais model \eqref{eq:Palais} the twisted product depends only on a $G_z$-invariant complement to $\g\cdot z$ in $T_z M$ in the Hamiltonian tube the normal form depends on a $G_z$-invariant complement $N$  of $\g_\mu\cdot z$ in $\operatorname{Ker} T_z\J$ and on the complement $\m$ of $\g_z$ in $\g_\mu$.
Note also that $N$, equipped with the restriction of $\omega(z)$ is a symplectic linear space supporting a linear Hamiltonian representation of $G_z$ which admits $\J_N$ as equivariant momentum map.
In addition, since $\mathcal{T}$ is a symplectomorphism equivariant with respect to the Hamiltonian actions of $G$ on $Y_r$ and $M$, by general geometric arguments we have
$$\J_Y=\J\circ\mathcal{T}$$
when the above expression is well defined.

\subsection{The MGS model as a reduced space}
\label{sec:reduced-MGS}

We can interpret the symplectic form $\Omega_Y$  in \eqref{eq:MGSsymp} as the reduced symplectic form for a more basic structure. In this section we are going to recall some well-known facts about this interpretation of the MGS model that will be used throughout the paper.
Let $G$ be a Lie group, $\mu\in \g^*$ and $K\subset G_\mu$ a compact subgroup. Since $K$ is compact we can choose a $K$-invariant complement of $\g_\mu$ in $\g$ and this choice induces a $K$-equivariant linear inclusion $\iota:\g_\mu^*\to \g^*$.
 Consider the product $T_\mu:=G\times \g_\mu^*$ and the map $T_\mu\rightarrow T^*G$ given by  $(g,\nu)\mapsto T_eL_{g^{-1}}^*(\mu+\iota(\nu))\in T^*G$.
 With this map we can  pull-back the canonical symplectic form of $T^*G$
 obtaining the  two-form $\omega_{T_\mu}$ given by
 \begin{equation*}\omega_{T_\mu}(g,\nu)(v_1,v_2)%((\xi_1,\dot \nu_1),(\xi_2,\dot \nu_2))
 =\lpair\dot \nu_2,\xi_1\rpair-\lpair\dot \nu_1,\xi_2\rpair
  +\lpair \mu+\iota(\nu),[\xi_1,\xi_2]\rpair, %\label{eq:Tmu-symplectic}
  \end{equation*}
  where $v_i=(T_eL_g\xi_i,\dot \nu_i)\in T_{(g,\nu)}G\times \g_\mu^*$. This form satisfies $\omega_{T_\mu}=-\ed \theta_{T_\mu}$ where
 \begin{equation}
\theta_{T_\mu}(g,\nu)(v_1) = \lpair \mu+\iota(\nu),\xi_1\rpair  . \label{eq:Tmu-potential}  \end{equation}

It can be checked that, for any $g\in G$ the two-form  $\omega_{T_\mu}(g,0)$ is non-degenerate and, therefore there is  an open $K$-invariant neighborhood $(\g_\mu^*)_r$ of $0\in \g_\mu^*$ such that $(G\times (\g_\mu^*)_r,\omega_{T_\mu})$ is a symplectic space (see Proposition 7.2.2 of \cite{ortegaratiu}).
 Let $(N,\omega_N)$ be a  symplectic linear space with a $K$-Hamiltonian linear action with momentum map \begin{equation}\label{linearmomentum}\langle\J_N(v),\xi\rangle:=\frac{1}{2}\omega_N(\xi\cdot v,v).\end{equation} The product $Z:=G\times((\g_\mu^*)_r\times N)$ equipped with the two-form $\omega_{T_\mu}+\omega_N$ is a symplectic space and the natural $G^L$ and $K^T$-actions are free and Hamiltonian with momentum maps
 \begin{equation}\mathbf{K}_{K^T}(g,\nu,v)=-\nu\rrestr{\k}+\J_N(v),\quad \text{and}\quad\mathbf{K}_{G^L}(g,\nu,v)=\Ad_{g^{-1}}^*\nu. \label{eq:Zmomentummaps}\end{equation}
 By the Marsden-Weinstein reduction procedure \cite{MR0402819} the quotient $\mathbf{K}_{K^T}^{-1}(0)/K^T$ is a symplectic manifold.  Since the $G^L$ and $K^T$ actions commute then the induced $G$-action on this quotient is also Hamiltonian.

Let now $\m$ be a $K$-invariant complement of $\k=\text{Lie}(K)$ in $\g_\mu$. There are small enough open neighborhoods $\m^*_r$ and $N_r$ of the origin in $\m^*$ and $N$ such that $\nu+\J_N(v)\in (\g_\mu^*)_r$ for every  $\nu\in \m^*_r$ and  $v\in N_r$. In this setting
\begin{align}
L:G\times_K((\m^*)_r\times N_r) &\longrightarrow \mathbf{K}_{K^T}^{-1}(0)/K^T  \label{eq:Lmap}\\
[g,\nu,v]_K &\longmapsto [g,\nu+\J_N(v),v]_K \nonumber
\end{align}
is a well-defined $G$-equivariant symplectomorphism between the MGS model $Y_r=G\times_K(\m_r^*\times N_r)$ equipped with the symplectic form \eqref{eq:MGSsymp} and the reduced space $\mathbf{K}_{K^T}^{-1}(0)/K^T$ equipped with its canonical reduced symplectic form.

\section{The symplectic slice for cotangent-lifted actions}
\label{sec:symplslice}
If $G$ acts on a manifold $Q$ then the natural lift of the action to $T^*Q$ will be called the \emph{cotangent-lifted action}. The cotangent-lifted action of a proper action is again proper and it is Hamiltonian with respect to the canonical symplectic form with momentum map
\begin{equation}\langle\mathbf J_{T^*Q}(q,p),\xi\rangle =\langle p,\xi_Q(q)\rangle \label{eq:TQmomentum}\end{equation}

Fix $z=(q,p)\in T^*Q$ with $\mu:=\J_{T^*Q}(z)$. By the equivariance of the cotangent bundle projection $\tau:T^*Q\to Q$ and of $\J_{T^*Q}$ we have   $G_z\subset G_q$ and $G_z\subset G_\mu$. Moreover, from \eqref{eq:TQmomentum} we see that $\mu$ is annihilated by any $\xi\in \g_q$. Graphically, the several groups involved can be put into lattice form as
\[\xymatrixrowsep{1pc}\xymatrix{
      & G &  \\
G_\mu \ar[ur]  &   & G_q \ar[ul] \\
      & G_q\cap G_\mu \ar[ul] \ar[ur] & \\
      & G_z\ar[u] & \\}\]
where each arrow represents an inclusion.
As we shall see, one of the main problems of building a Hamiltonian tube for a cotangent-lifted action is that the relationship between the subgroups $G_q$, $G_z$ and $G_\mu$ can be complicated in general. In this section we will first introduce a splitting of $\g$ that has good properties with respect to $\mu$ and $G_q$ and then will use this splitting to restate the characterization of the symplectic slice for cotangent-lifted actions obtained in \cite{MR2421706}.
\subsection{A Lie algebra decomposition}
We start by giving a useful invariant splitting of $\g$ that will be the starting point of many of the constructions we will make in the next sections.

\begin{prop}

 Let $G$ be a Lie group, $H$ a compact subgroup and $\mu\in \g^*$ with $[\h,\h]\in \langle\mu\rangle^\circ$. Let $\Omega^\mu$ be the bilinear form on $\g$ given by $\Omega^\mu(\xi_1,\xi_2)=-\langle \mu,[\xi_1,\xi_2]\rangle$. Then there is a $H_\mu$-invariant splitting \begin{equation}\g=\g_\mu\oplus \o \oplus \l\oplus \n \label{eq:g-splitting}\end{equation} satisfying
 \begin{enumerate}
  \item $\h=\h_\mu\oplus \l$.
  \item $\Omega^\mu\restr {\o}$ is non-degenerate.
  \item $\l$ and $\n$ are $\Omega^\mu$-isotropic subspaces and $\Omega^\mu\restr {\l\oplus \n}$ is non-degenerate.
      \item $\Omega^\mu(\xi_1,\xi_2)=0\quad\text{if}\quad \xi_1\in\o,\,\xi_2\in\l\oplus\n$.
 \end{enumerate}
\label{prop:liealgsplitting}
\end{prop}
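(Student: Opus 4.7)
The plan is to induce a symplectic structure on $V := \g/\g_\mu$ from $\Omega^\mu$, perform an equivariant isotropic-to-Lagrangian extension there, and lift the resulting decomposition back to $\g$. First, I would observe that the radical of $\Omega^\mu$ is exactly $\g_\mu$, since $\xi\in\g_\mu$ is equivalent to $\langle\mu,[\xi,\cdot]\rangle=0$. Hence $\Omega^\mu$ descends to a non-degenerate skew form $\bar\Omega^\mu$ on $V$. The subgroup $H_\mu$ is closed in the compact $H$, hence compact, and it preserves $\mu$, $\g_\mu$, and therefore both $\Omega^\mu$ and $\bar\Omega^\mu$; so $V$ is a symplectic $H_\mu$-representation. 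The hypothesis $[\h,\h]\subset\langle\mu\rangle^\circ$ translates into $\Omega^\mu|_{\h\times\h}=0$, i.e. $\h$ is $\Omega^\mu$-isotropic. Using an $H_\mu$-invariant inner product on $\h$ (available by compactness), pick an $H_\mu$-invariant complement $\l$ of $\h_\mu$ in $\h$. Since $\l\cap\g_\mu=\l\cap\h_\mu=0$, the projection $\l\hookrightarrow V$ is injective onto an $H_\mu$-invariant isotropic subspace $\bar\l$.

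The main step is the equivariant extension of $\bar\l$ inside $V$. Fix an $H_\mu$-invariant inner product on $V$ and let $N_0$ be the orthogonal complement of $\bar\l^{\bar\Omega^\mu}$; it is $H_\mu$-invariant, and since $N_0\cap\bar\l^{\bar\Omega^\mu}=0$, the pairing $\bar\Omega^\mu|_{\bar\l\times N_0}$ is non-degenerate, but $N_0$ need not be isotropic. To correct this, identify $\bar\l\cong N_0^{\ast}$ via $\bar\Omega^\mu$ and define $\phi\colon N_0\to\bar\l$ by the equation $\bar\Omega^\mu(\phi(n),n')=-\tfrac{1}{2}\bar\Omega^\mu(n,n')$. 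Expanding $\bar\Omega^\mu(n+\phi(n),n'+\phi(n'))$ and using $\bar\Omega^\mu|_{\bar\l\times\bar\l}=0$ shows that the graph $\bar\n:=\{n+\phi(n):n\in N_0\}$ is isotropic, still paired non-degenerately with $\bar\l$, and $H_\mu$-invariant (because $\bar\Omega^\mu$ is). Setting $\bar\o:=(\bar\l\oplus\bar\n)^{\bar\Omega^\mu}$, standard symplectic linear algebra gives $V=\bar\l\oplus\bar\n\oplus\bar\o$ with $\bar\o$ symplectic and $\bar\Omega^\mu$-orthogonal to $\bar\l\oplus\bar\n$, and $\bar\o$ inherits $H_\mu$-invariance.

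Finally, I would lift back to $\g$. Choose an $H_\mu$-invariant inner product on $\g$ and let $\g_\mu^{\perp}$ be the orthogonal complement of $\g_\mu$; the projection $\g_\mu^{\perp}\to V$ is an isomorphism of $H_\mu$-representations. Let $\n,\o\subset\g_\mu^{\perp}$ be the preimages of $\bar\n,\bar\o$; they are $H_\mu$-invariant, and a short check (using $\l\cap\g_\mu=0$, $\n\oplus\o\subset\g_\mu^{\perp}$, and the decomposition of $V$) shows $\l\cap(\n\oplus\o)=0$. A dimension count then yields the required splitting $\g=\g_\mu\oplus\o\oplus\l\oplus\n$. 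Properties (1)–(4) transfer directly from the analogous statements in $V$, because $\Omega^\mu$ coincides with the pullback of $\bar\Omega^\mu$ on any subspace of $\g$ transverse to the radical $\g_\mu$.

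The step I expect to be the main obstacle is the equivariant isotropic correction in the second paragraph: in the non-equivariant setting one picks any complement of $\bar\l^{\bar\Omega^\mu}$, but making that complement $H_\mu$-invariant \emph{and} isotropic forces the explicit graph-of-$\phi$ construction, and it is precisely the compactness of $H$ (hence of $H_\mu$) that allows all the averaging arguments and inner-product choices to be made equivariantly.
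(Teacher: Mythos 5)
Your argument is correct, and it reaches the splitting by a route that differs from the paper's in how $\n$ and $\o$ are produced, though the overall strategy is the same: reduce to equivariant linear symplectic algebra on the non-degenerate part of $\Omega^\mu$ (you work on the quotient $\g/\g_\mu$, the paper on the metric complement $\g_\mu^\perp$, which are isomorphic $H_\mu$-representations), with compactness supplying all invariant inner products. The paper sets $\l:=\h\cap\g_\mu^\perp$, defines $\o$ \emph{explicitly} as the set of $\lambda\in\g_\mu^\perp\cap\h^\perp$ with $\langle\ad^*_\lambda\mu,\eta\rangle=0$ for all $\eta\in\h$, proves $\l^\omega=\o\oplus\l$ and that $\l$ is Lagrangian in the symplectic subspace $\o^\omega$, and then obtains $\n$ by invoking the existence of an $H_\mu$-invariant Lagrangian complement (Lemma 7.1.2 of \cite{ortegaratiu}). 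You instead take $\l$ to be an invariant complement of $\h_\mu$ inside $\h$ (which makes item 1 immediate), construct the dual isotropic $\n$ by hand through the graph-of-$\phi$ correction --- in effect reproving the invariant-Lagrangian-complement lemma the paper cites --- and only afterwards define $\o$ as the $\Omega^\mu$-orthogonal of $\l\oplus\n$. Your route is more self-contained and the isotropic correction is explicit; the paper's route yields $\o$ by a concrete formula with geometric content (it is the symplectic slice at $\mu$ for the $H$-action on the coadjoint orbit $\Orb_\mu$, as the remark after the proposition explains), which is the description reused later, whereas your $\o$ is characterized only abstractly --- both versions satisfy properties 1--4 as stated. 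The one place your write-up is terse is the non-degeneracy of the pairing of $\bar\l$ with $N_0$: the condition $N_0\cap\bar\l^{\bar\Omega^\mu}=0$ gives non-degeneracy in the $N_0$ slot directly, and you should add the dimension count $\dim N_0=\dim V-\dim\bar\l^{\bar\Omega^\mu}=\dim\bar\l$ to get it in the $\bar\l$ slot; with that line added the proof is complete.
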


\begin{proof}
As $H$ is  compact  we can endow $\g$ with a $\Ad_H$-invariant metric.  The two-form $\Omega^\mu$ restricted to $\g_\mu^\perp$ is non-degenerate because if
$\xi\in \Ker \Omega^\mu\restr{\g_\mu^\perp}$ then
$\lpair \mu,[\xi,\eta]\rpair=0$ for all $\eta\in\g_\mu^\perp$ but if now $\eta\in \g_\mu$ then $0=\lpair \ad^*_\eta \mu,\xi\rpair=-\lpair \mu,[\xi,\eta]\rpair=\lpair \ad^*_{\xi} \mu,\eta\rpair$ for any $\eta$ in $\g$ but this implies that $\ad^*_\xi\mu=0$ and as $\xi\in \g_\mu^\perp$ then $\xi=0$. Denote by $\omega=\Omega^\mu \restr{\g_\mu^\perp}$ the restriction. The form $\omega$ is  symplectic  on $\g_\mu^\perp$.

Define now $\l:=\h\cap \g_\mu^\perp$ and \[\o=\{\lambda\in \g_\mu^\perp\cap {\h}^\perp\subset \g \mid \langle\ad^*_\lambda\mu,\eta\rangle=0\quad \forall \eta\in {\h}\}.\]
If $\xi\in \g_\mu^\perp$ is $\omega$-orthogonal to $\l$ then it must lie in $\o\oplus \l$ because $\xi$ can be decomposed as $\xi=\xi_1+\xi_2$ with $\xi_1\in \h\cap \g_\mu^\perp=\l$ and $\xi_2\in \h^\perp \cap \g_\mu^\perp$ but then as $\langle\mu,[\xi_2,\eta]\rangle=\langle\mu,[\xi,\eta] \rangle=0$ for any $\eta\in \h$
then $\xi_2\in \o$. That is, $\l^\omega \subset \o\oplus \l$. Conversely, if $\xi\in \o$ then by definition of $\o$ $\xi\in \l^\omega$, and if $\xi\in \h$ for any $\eta \in \l$ we have $\langle \mu,[\xi,\eta]\rangle=0$ because $\l\in\h$ and $\mu\in [\h,\h]^\circ$ so $\xi\in \l^\omega$ and, therefore, $\l^\omega = \o\oplus \l$.

Let $\xi\in \o\cap {\o}^\omega$.  Noting that $\xi \in \l^\omega$ we have
$\xi\in {\o}^\omega\cap \l^\omega=(\o\oplus \l)^\omega=(\l^\omega)^\omega=\l$
but as $\o\cap \l=0$ this implies that $\xi=0$, thus the restriction $\omega\restr{\o}$  is non-degenerate.

To build the space $\n$ we will need a preliminary standard result in linear algebra.
\begin{lem}
Let $A,B,C\subset E$ be three linear subspaces of a linear space $E$ such that $A\subset B$ and $A\cap C=0$. Then \[B\cap (C\oplus A)=(B\cap C)\oplus A\]
\end{lem}
Note that $\l\subset \o^\omega$ and as $\lpair \mu,[\xi,\eta]\rpair=0$ for any $\xi,\eta \in \l$, then $\l$ is an isotropic subset of the symplectic subspace $\o^\omega$, but in fact
\begin{align*}
\l^\omega\cap \o^\omega=\o^\omega\cap (\o\oplus \l)=(\o^\omega\cap\o)\oplus \l=\l
\end{align*}
where we applied the previous lemma with $A=\l$, $B=\o^\omega$ and $C=\o$. This implies that $\l$ is actually a Lagrangian subspace of $\o^\omega$ and it is clearly $H_\mu$-invariant. By Lemma 7.1.2 of \cite{ortegaratiu}
there must exists a $H_\mu$-invariant complement $\n\subset \g_\mu^\perp$ of $\l$. That is, we have $\g=\g_\mu\oplus \o\oplus \l\oplus \n$ and with respect to this splitting $\Omega^\mu$ block diagonalizes as
\begin{equation*}\Omega^\mu=\begin{bmatrix}
   0 & 0 & 0 & 0\\
   0 & \Omega\restr{\o} & 0 & 0 \\
   0 & 0 & 0 & * \\
   0 & 0 & * & 0
  \end{bmatrix}
\end{equation*}
where the entries $*$ will not be important in our discussion.
\end{proof}
\begin{rem}
The subspace $\o$ was introduced in \cite{MR2421706} by a different procedure. In that work $\o$ was constructed as a symplectic slice for the $H$-action on the coadjoint orbit $\Orb_\mu\subset \g^*$.  In fact the subspaces $\l$ and $\n$ can be understood as part of a Witt-Artin decomposition $T_\mu\Orb_\mu=\l\cdot \mu\oplus\n\cdot \mu\oplus \o\cdot \mu$ (see \cite{ortegaratiu}).
Note that as vector spaces supporting a $H_\mu$-action both $\l$ and $\n$ are isomorphic to the quotient $\h/\h_\mu$ and $\o$ is isomorphic to $\h^{\Omega_\mu}/(\Ker \Omega_\mu + \h)$.
\end{rem}

\subsection{The symplectic slice}

The first step towards the construction of a Hamiltonian tube for cotangent-lifted actions is to describe the symplectic slice at a point $z=(q,p)\in T^*Q$. This was done in  \cite{MR2293645} under the assumption $G_q\subset G_\mu$. Later, in \cite{MR2421706} the symplectic  slice for the general case was worked out.
Before stating the result we will introduce some new notation that will be used throughout the paper.  Let $V$ be a linear space supporting a linear representation of a group $G$ and let  $a\in V$ and $b\in V^*$. The \emph{diamond product} (see \cite{MR1627802}) $a \diamond \alpha \in \g^*$ is defined as \[\langle a\diamond\alpha,\eta\rangle:=\langle\alpha,\eta \cdot a\rangle \]
 for all $\eta\in \g$. With this notation the cotangent lift of a linear $G$-action on $V$ has momentum map $\J(a,b)=a\diamond b \in \g^*$. Note that if we consider $G=SO(3)$ acting on $\R^3$ the diamond product is just the usual cross product. If $\h\subset \g$ is a subspace then $a\diamond_\h b:=(a\diamond b)\restr{\h}\in \h^*$.
 If $\h$ is the Lie algebra of a subgroup $H\subset G$, $a\diamond_\h b$ is the momentum map for the $H$-action on $T^*V$ induced by  restriction of the original $G$-action, which is in turn the same as the lift of the restricted $H$-action on $V$. In this context, the next result gives a characterization of the symplectic slice for a cotangent-lifted action.

\begin{thm}[Theorem 6.1 of \cite{MR2421706}]
 Let $z=(q,p)\in T^*Q$ and $S$ be a $G_q$-invariant complement of $\g\cdot q$ in $T_qQ$.
 Define $H:=G_q$, $\mu:=\J(z)$, $\alpha:=z\rrestr{S}\in S^*$,  $B:=(\h_\mu\cdot \alpha)^\circ \subset S$. Let $\o$ be as defined in \eqref{eq:g-splitting}

The symplectic slice
$N$ at $z$ is linearly and $G_z$-equivariantly symplectomorphic to the product $\o\times T^*B$ equipped with the symplectic form
 \begin{equation}\Omega_N((\lambda_1,v_1,w_1)(\lambda_2,v_2,w_2))=-\langle \mu,[\lambda_1,\lambda_2]\rangle+\langle w_2,v_1\rangle -\langle w_1,v_2\rangle \label{eq:symplslice}\end{equation}
 the corresponding momentum map for the linear $G_z$-action is
 \[\J_N(\lambda,(a,\beta))=\frac{1}{2}\lambda\diamond_{\g_z}\ad^*_\lambda\mu+a\diamond_{\g_z} \beta.\]

\label{thm:symplnorm-cotang}
\end{thm}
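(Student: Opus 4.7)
The plan is to compute the symplectic slice in the standard local model for $T^*Q$ near $z$, using the splitting of $\g$ from Proposition \ref{prop:liealgsplitting}. First I would apply the Palais tube \eqref{eq:Palais} to identify a neighborhood of $q\in Q$ with a neighborhood of $[e,0]_H$ in $G\times_H S$. This induces a $G$-equivariant identification of a neighborhood of $T^*_q Q$ in $T^*Q$ with the corresponding neighborhood in $T^*(G\times_H S)$, and the latter is realized as the symplectic reduction at $0$ of $T^*G\times T^*S$ by the cotangent-lifted twist $H$-action, exactly as in Section \ref{sec:reduced-MGS}. In left trivialization, the lift of $z$ to $T^*G\times T^*S$ is the point $(e,\mu,0,\alpha)$, with $\alpha=z\rrestr{S}$ and $\mu=\J(z)$ (recall $\mu\rrestr{\h}=0$, so $\mu+\alpha$ represents $z$ consistently under the reduction).

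Next I would compute $T_z(T^*Q)$ in this model: tangent vectors are represented by $(\xi,\dot\nu,\dot s,\dot\beta)\in\g\oplus\g^*\oplus S\oplus S^*$, taken modulo the $H^T$-action and restricted to $\ker T\mathbf{K}_{H^T}$. The infinitesimal $G$-action gives $\eta\cdot z$ corresponding to $(\eta,0,0,0)$, so from \eqref{eq:TQmomentum} the kernel condition $T_z\J(\xi,\dot\nu,\dot s,\dot\beta)=0$ becomes a linear constraint on $(\xi,\dot s,\dot\beta)$ expressible in terms of $\mu$ and $\alpha$. Now I would use the splitting $\g=\g_\mu\oplus\o\oplus\l\oplus\n$ together with $\h=\h_\mu\oplus\l$ from Proposition \ref{prop:liealgsplitting} to identify the contributions: the $\g_\mu$-part of $\xi$ gives $\g_\mu\cdot z$ (which we are quotienting out), the $\n$-part is killed because $\n\cdot z$ already has a nontrivial $\dot\nu$-projection outside $\ker T_z\J$, the $\l$-part is redundant modulo the twist $H^T$-action, and what survives from $\g$ is exactly $\o$. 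For the $S\oplus S^*$ part, the momentum condition forces $\dot s\diamond_{\h_\mu}\alpha + \alpha\diamond_{\h_\mu}\dot\beta=0$; solving this modulo the $\h_\mu$-twist action produces precisely $T^*B$ with $B=(\h_\mu\cdot\alpha)^\circ\subset S$. Assembling these factors gives the $G_z$-equivariant identification $N\cong\o\times T^*B$.

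With the identification in place, the symplectic form and momentum map can be read off by restricting the canonical structures on $T^*G\times T^*S$ to the slice. The $\omega_{T_\mu}$-contribution from Section \ref{sec:reduced-MGS}, restricted to the pure group factor $\o$ (on which only the coadjoint-orbit term survives since $\dot\nu$-variations vanish), produces the first summand $-\langle\mu,[\lambda_1,\lambda_2]\rangle$ of \eqref{eq:symplslice}; the canonical form on $T^*S$ restricts to $\langle w_2,v_1\rangle-\langle w_1,v_2\rangle$ on $T^*B$; and the mixed $\o$-$T^*B$ block vanishes because $\o$ was built to be $\Omega^\mu$-orthogonal to $\l\oplus\n$ and, by construction, acts trivially on $\alpha$ modulo $B$. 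Finally the momentum map formula follows by substituting $v=(\lambda,a,\beta)$ into the general formula \eqref{linearmomentum} for a linear Hamiltonian action and splitting the action of $\xi\in\g_z$ into its coadjoint action on the $\o$-summand (yielding $\tfrac12\lambda\diamond_{\g_z}\ad^*_\lambda\mu$) and its cotangent-lifted action on $T^*B$ (yielding $a\diamond_{\g_z}\beta$).

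The main obstacle will be step two: carefully bookkeeping the interaction between $\g_\mu$ and $\h$, since in general $\h\not\subset\g_\mu$ and $\g_\mu\not\subset\h$. The splitting in Proposition \ref{prop:liealgsplitting} is tailored precisely for this: the subspace $\o$ captures the "non-isotropy" part of $\g$ whose infinitesimal action lies in $\ker T_z\J$ but not in $\g_\mu\cdot z$, while $\l\oplus\n$ absorbs the directions killed by the twist quotient and by the momentum constraint on the base side. Once this is organized, verifying $G_z$-equivariance is immediate (each summand is manifestly $G_z$-invariant by construction of the splitting), and the symplectic form computation reduces to a finite check on four pairs of vectors. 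This also aligns with Theorem 6.1 of \cite{MR2421706}, which may be quoted directly after checking that our $\o$ matches the coadjoint-orbit slice used there.
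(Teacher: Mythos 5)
First, note that the paper does not prove this statement: it is quoted verbatim as Theorem 6.1 of \cite{MR2421706} (where the subspace $\o$ is obtained as a symplectic slice for the $H$-action on the coadjoint orbit $\Orb_\mu$, cf.\ the remark following Proposition \ref{prop:liealgsplitting}), so your proposal has to be judged on its own. Your overall strategy --- pass to the cotangent-lifted Palais model, realize $T^*(G\times_H S)$ as the reduction at zero of $T^*G\times T^*S$ by the lifted $H^T$-action, lift $z$ to $(e,\mu,0,\alpha)$, and exhibit a $G_z$-invariant complement of $\g_\mu\cdot z$ in $\operatorname{Ker}T_z\J$ adapted to the splitting of Proposition \ref{prop:liealgsplitting} --- is sound and is in the spirit of the cited reference. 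The problem is that your second step, which you yourself identify as the main obstacle, is wrong as written, and it is exactly where the content of the theorem lies.

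Concretely: (i) the linearization of $\J_{H^T}(g,\nu,a,b)=-\nu\rrestr{\h}+a\diamond_\h b$ at $(e,\mu,0,\alpha)$ is $-\dot\nu\rrestr{\h}+\dot a\diamond_\h\alpha$; your condition ``$\dot s\diamond_{\h_\mu}\alpha+\alpha\diamond_{\h_\mu}\dot\beta=0$'' does not typecheck ($\alpha\in S^*$ cannot sit in the left slot of the diamond) and the $\dot b$-contribution is $0\diamond_\h\dot b=0$, so the constraint couples the base variable $\dot a$ to the group variables $(\xi,\dot\nu)$, not to $\dot\beta$; in particular ``solving it modulo the $\h_\mu$-twist'' does not by itself yield $T^*B$. (ii) Your reason for keeping $\o$ and discarding $\n$ is internally inconsistent: a pure group direction $(\lambda,0,0,0)$ with $\lambda\in\o$ fails to lie in $\operatorname{Ker}T_z\J$ for exactly the same reason as one with $\lambda\in\n$, since $\ad^*_\lambda\mu\neq 0$ in both cases. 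The $\o$-summand of $N$ consists of the lifted vectors $(\lambda,\ad^*_\lambda\mu,0,0)$, which are admissible precisely because $(\ad^*_\lambda\mu)\rrestr{\h}=0$ is the defining property of $\o$; the $\n$-directions are not thrown away but reappear as the compensating components $\xi_\n(\dot a)\in\n$, determined through the isomorphism $\n\to\l^*$, $\zeta\mapsto\ad^*_\zeta\mu\rrestr{\l}$, that must accompany a base direction $\dot a\in B$ with $\dot a\diamond_\l\alpha\neq 0$ to stay tangent to $\J_{H^T}^{-1}(0)\cap\operatorname{Ker}T_z\J$. The factor $B^*$ arises because, for $\eta\in\h_\mu$, the twist action identifies $[(\eta,0,0,0)]$ with $[(0,0,0,\eta\cdot\alpha)]$, so quotienting by $\g_\mu\cdot z$ removes exactly $\h_\mu\cdot\alpha$ from the $S^*$-directions. (iii) The phrase ``$\o$ acts trivially on $\alpha$ modulo $B$'' has no meaning, since $\o\not\subset\h$ and does not act on $S$ at all; the vanishing of the mixed block is instead a computation on the representatives above using item 4 of Proposition \ref{prop:liealgsplitting} and the isotropy of $\n$. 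With these corrections the restriction of $\omega_{T^*G}\oplus\omega_{T^*S}$ does give \eqref{eq:symplslice}, and your final step (reading off $\J_N$ from \eqref{linearmomentum}) is fine; alternatively, you may simply quote Theorem 6.1 of \cite{MR2421706}, as the paper does, after matching the two constructions of $\o$.
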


\section{$G$-tubes}
\label{sec:gtubes}

In this section we will define both simple and restricted $G$-tubes. These maps will be the building blocks needed to find an explicit Hamiltonian tube for cotangent-lifted actions.

\subsection{Simple $G$-tubes}
From now on we will identify $TG$ with $G\times \g$ and $T^*G$ with $G\times \g^*$ using left trivializations
\begin{align*}
G\times \g & \longrightarrow TG & G\times \g^* & \longrightarrow T^*G\\
(g,\xi) &\longmapsto T_eL_g(\xi)   &(g,\nu) &\longmapsto T_e^*L_{g^{-1}}(\nu).
\end{align*}
 Combining them we can trivialize $T(T^*G)\cong G\times \g\times \g^*\times \g^*$.

We will need the following well-known properties of the symplectic structure and the cotangent-lifted actions of $G$ on $T^*G$ (see  \cite{MR515141})
\begin{prop}
Let $G$ act on itself by the left multiplication action and by cotangent lifts on $T^*G$ then we have
 \begin{itemize}
  \item \emph{Symplectic structure}: Let $u_i:=(\xi_i,\beta_i)\in T_{(g,\nu)}T^*G$ with $i=1,2$, the canonical one-form of $T^*G$ is \begin{equation}\theta_{T^*G}(u_1)=\lpair \nu,\xi_1\rpair\label{eq:oneform}\end{equation} and the canonical symplectic form $\omega_{T^*G}=-\ed\theta_{T^*G}$ is
  \begin{equation}
  \omega_{T^*G}(u_1,u_2)=\lpair\beta_2,\xi_1\rpair-\lpair\beta_1,\xi_2\rpair
  +\lpair \nu,[\xi_1,\xi_2]\rpair . \label{eq:twoform}
 \end{equation}
  \item \emph{Cotangent-lifted left multiplication}: The $G$-action given by $h\cdot ^L(g,\nu)=(hg,\nu)$ has as infinitesimal generator $\eta_{T^*G}^L(g,\nu)=(\Ad_{g^{-1}}\eta,0)$ and  is Hamiltonian with momentum map $\J_L(g,\nu)=\Ad_{g^{-1}}^* \nu$.

  \item \emph{Cotangent-lifted right multiplication}: The $G$-action given by $h\cdot ^R(g,\nu)=(gh^{-1},\Ad_{h^{-1}}^*\nu)$ has as infinitesimal generator $\eta_{T^*G}^R(g,\nu)=(-\eta,-\ad^*_\eta \nu)$ and  is Hamiltonian with momentum map $\J_R(g,\nu)=-\nu$.
 \end{itemize}
\label{prop:T*G}
\end{prop}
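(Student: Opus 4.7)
The plan is to verify each of the three claims by direct computation in the left trivializations $TG\cong G\times\g$ and $T^*G\cong G\times\g^*$.

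For the canonical one-form $\theta_{T^*G}$, I would use its intrinsic definition $\theta_{T^*G}|_p(w)=\langle p,T_p\pi(w)\rangle$, where $\pi:T^*G\to G$. A point $(g,\nu)\in G\times\g^*$ corresponds to $T^*_eL_{g^{-1}}\nu\in T^*_gG$, and a trivialized tangent vector $u_1=(\xi_1,\beta_1)$ has base component $T_eL_g\xi_1\in T_gG$; therefore $\theta_{T^*G}(u_1)=\langle T^*_eL_{g^{-1}}\nu,\,T_eL_g\xi_1\rangle=\langle\nu,\xi_1\rangle$, which is \eqref{eq:oneform}. To obtain \eqref{eq:twoform} I would compute $-d\theta_{T^*G}$ via Cartan's formula on the extensions $V_i(g,\nu)=(\xi_i,\beta_i)$ that are left-invariant on the $G$-factor and constant on $\g^*$. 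On such extensions $\theta(V_j)=\langle\nu,\xi_j\rangle$ depends only on $\nu$, so $V_i(\theta(V_j))=\langle\beta_i,\xi_j\rangle$; left-invariance on $G$ together with constancy on $\g^*$ gives $[V_1,V_2]=([\xi_1,\xi_2],0)$, hence $\theta([V_1,V_2])=\langle\nu,[\xi_1,\xi_2]\rangle$. Assembling these through $-d\theta(V_1,V_2)=-V_1(\theta(V_2))+V_2(\theta(V_1))+\theta([V_1,V_2])$ reproduces \eqref{eq:twoform} exactly.

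For the infinitesimal generators I would differentiate the action curves. For the left action, $\frac{d}{dt}\big|_0(\exp(t\eta)g,\nu)=(T_eR_g\eta,0)$, and left-trivializing $T_eR_g\eta$ via $T_gL_{g^{-1}}$ gives $\Ad_{g^{-1}}\eta$, whence $\eta^L_{T^*G}(g,\nu)=(\Ad_{g^{-1}}\eta,0)$. For the right action, $\frac{d}{dt}\big|_0 g\exp(-t\eta)=-T_eL_g\eta$ trivializes to $-\eta$, and $\frac{d}{dt}\big|_0\Ad^*_{\exp(-t\eta)}\nu=-\ad^*_\eta\nu$, giving $\eta^R_{T^*G}(g,\nu)=(-\eta,-\ad^*_\eta\nu)$.

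Finally, for each candidate momentum map I would verify the defining identity $i_{X}\omega_{T^*G}=d\langle\J,\eta\rangle$ pointwise. For $\J_L(g,\nu)=\Ad^*_{g^{-1}}\nu$, evaluating the right-hand side on a trivialized tangent $(\xi,\beta)$ uses $g(t)=g\exp(t\xi)+O(t^2)$, hence $\frac{d}{dt}\big|_0\Ad_{g(t)^{-1}}\eta=-[\xi,\Ad_{g^{-1}}\eta]$, producing $\langle\beta,\Ad_{g^{-1}}\eta\rangle-\langle\nu,[\xi,\Ad_{g^{-1}}\eta]\rangle$, which matches $\omega_{T^*G}((\Ad_{g^{-1}}\eta,0),(\xi,\beta))$ by \eqref{eq:twoform}. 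For $\J_R(g,\nu)=-\nu$ the computation is shorter: $d\langle\J_R,\eta\rangle(\xi,\beta)=-\langle\beta,\eta\rangle$, and inserting $(-\eta,-\ad^*_\eta\nu)$ and $(\xi,\beta)$ into \eqref{eq:twoform} the $\langle\nu,[\eta,\xi]\rangle$ and $\langle\ad^*_\eta\nu,\xi\rangle$ terms cancel, leaving $-\langle\beta,\eta\rangle$. The only real obstacle here is the sign bookkeeping around the adjoint-derivative identities; the proof is otherwise standard, which is why \cite{MR515141} is cited rather than reproduced.
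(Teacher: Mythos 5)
Your computations are correct, and they follow the standard route: the paper itself gives no proof of Proposition \ref{prop:T*G}, delegating it to \cite{MR515141}, and your verification (intrinsic definition of $\theta_{T^*G}$, Cartan's formula on trivialization-constant extensions, differentiation of the action curves, and the pointwise check of $i_{\eta_{T^*G}}\omega_{T^*G}=\ed\langle\J,\eta\rangle$) is exactly the calculation that citation stands for; in particular your sign bookkeeping for $\Ad_{g(t)^{-1}}$ and the cancellation $\langle\ad^*_\eta\nu,\xi\rangle=\langle\nu,[\eta,\xi]\rangle$ are right. The only small omission is that, with the paper's convention that ``Hamiltonian'' includes equivariance of the momentum map, you should also record the one-line checks $\J_L(h\cdot^L(g,\nu))=\Ad^*_{h^{-1}}\J_L(g,\nu)$ and $\J_R(h\cdot^R(g,\nu))=\Ad^*_{h^{-1}}\J_R(g,\nu)$.
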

Notice that if we think of  $\g^*$ as a manifold endowed with the $G$-action $g\cdot \nu=\Ad^*_{g^{-1}}\nu$ then the above left and right actions of $G$ on $T^*G$ are exactly the actions $G^L$ and $G^T$ on the product manifold $G\times \g^*$ (see Subsection \ref{sec:twisted}).

We  now define simple $G$-tubes.
\begin{defn}
 Let $H$ be a compact subgroup of $G$ and $\mu\in \g^*$. Given a splitting
 $\g=\g_\mu\oplus \q$ invariant under the $H_\mu$-action, a simple $G$-tube is a map
 \begin{equation*} \Theta:G\times U \subset G\times (\g_\mu^*\times \q)\longrightarrow G\times \g^*\cong T^*G %\label{eq:gmutube}
 \end{equation*}
 such that:
 \begin{enumerate}
  \item $U$ is a connected $H_\mu$-invariant neighborhood of 0 in $\g_\mu^*\times \q$.
  \item $\Theta$ is a $G^L$-equivariant diffeomorphism onto $\Phi(G\times U)$ satisfying $\Theta(e,0)=(e,\mu)$.
  \item Let $u_i:=(\xi_i,\dot\nu_i,\dot\lambda_i)\in T_{(g,\nu,\lambda)} G\times \g_\mu^*\times \q $ with $i=1,2$, then
  \begin{equation}\begin{split}
  (\Theta^*\omega_{T^*G})(u_1,u_2)=
  \lpair \dot\nu_2,\xi_1\rpair
  -\lpair \dot \nu_1, \xi_2\rpair
  +\lpair \nu+\mu,[\xi_1, \xi_2]\rpair-\lpair\mu,[\dot\lambda_1,\dot\lambda_2]\rpair
 \end{split} \label{eq:simpletube2form}\end{equation}

  \item $\Theta$ is $H_\mu^T$-equivariant.
  \item $T_{(e,0,0)}\Theta(\xi,\dot \nu,\dot \lambda)=(\xi+\dot \lambda;\dot \nu+\ad^*_{\dot \lambda} \mu)\in T_{(e,0)}(T^*G)$.
 \end{enumerate}
\label{def:simpletube}
\end{defn}
If $\q$ is defined as above, note that the symplectic slice for the cotangent-lifted left multiplication of $G$ on $T^*G$ at $(e,\mu)\in T^*G$ is precisely $\q$. Indeed, as $T_{(e,\mu)} \J_L(e,\mu)\cdot (\xi,\dot \nu)=-\ad^*_{\xi}\mu+\dot \nu$ then a complement to $\g_\mu\cdot(e,\mu)$ can be chosen to be the space $\{(\xi,\ad^*_{\xi}\mu) \mid \xi \in \q\}$, and using \eqref{eq:twoform}, this linear space is symplectomorphic to $(\q,\Omega^\mu\rrestr{\q})$.
According to Theorem \ref{thm:classicalMGS}, the MGS model at $(e,\mu)\in T^*G$ for the free cotangent-lifted left multiplication of $G$ on $T^*G$ will be of the form $G\times \g_\mu^*\times \q$ and, in this case, the symplectic form \eqref{eq:MGSsymp} is precisely the one given by\eqref{eq:simpletube2form}. In other words, a simple $G$-tube is a Hamiltonian tube for $T^*G$ at $(e,\mu)$ (properties 1--3) but we further require $H_\mu^T$-equivariance and a prescribed property on its linearization (properties 4--5).

The next result ensures the existence of simple $G$-tubes. The idea is that an equivariant version of the Moser trick can be used to construct them. This part follows closely Theorem 6 in \cite{MR1486529} and Theorem 7.3.1 in \cite{ortegaratiu}. We are going to apply Moser's trick  to an explicit, well-behaved, family of symplectic potentials.
\begin{prop}[Existence of simple $G$-tubes]
Given a $H_\mu$-invariant splitting $\g=\g_\mu\oplus \q$ there exists a $H_\mu$-invariant open neighborhood $D$ of $0\in \g_\mu^*\times \q$ and a simple $G$-tube  $\Theta:G\times D\subset G\times \g_\mu^* \times \q \to G\times \g^*$.
\label{prop:simpleGtube}
\end{prop}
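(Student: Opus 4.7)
The plan is to build $\Theta$ as a composition $\Theta_0\circ\phi_1$ of an explicit equivariant candidate $\Theta_0$ with a corrective diffeomorphism $\phi_1$ obtained from an equivariant Moser argument. Let $\iota:\g_\mu^*\hookrightarrow\g^*$ denote the $H_\mu$-equivariant inclusion dual to the projection $\g\to\g_\mu$ with kernel $\q$, and define
$$
\Theta_0:G\times \g_\mu^*\times\q\longrightarrow G\times\g^*\cong T^*G,\qquad
\Theta_0(g,\nu,\lambda)=\bigl(g\exp(\lambda),\ \Ad^*_{\exp(\lambda)}(\mu+\iota(\nu))\bigr).
$$
A routine check shows that $\Theta_0$ is a $G^L$-equivariant local diffeomorphism near $G\times\{0\}$, is $H_\mu^T$-equivariant because $\Ad^*_h\mu=\mu$ and $\Ad_h\q=\q$ for every $h\in H_\mu$, sends $(e,0,0)$ to $(e,\mu)$, and has differential at $(e,0,0)$ exactly that prescribed by property (5) of Definition \ref{def:simpletube}. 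Hence $\Theta_0$ already satisfies properties (1), (2), (4) and (5); only property (3), i.e.\ $\Theta^*\omega_{T^*G}=\Omega$, must still be arranged.

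Introduce the $G^L\times H_\mu^T$-invariant symplectic potentials
$$
\theta_0:=\Theta_0^*\theta_{T^*G},\qquad
\theta_\Omega(g,\nu,\lambda)(\xi,\dot\nu,\dot\lambda):=\langle\mu+\iota(\nu),\xi\rangle+\frac{1}{2}\langle\mu,[\lambda,\dot\lambda]\rangle,
$$
which satisfy $-d\theta_0=\Theta_0^*\omega_{T^*G}=:\omega_1$ and $-d\theta_\Omega=\Omega=:\omega_0$ (invariance of $\theta_0$ holds because the canonical 1-form of $T^*G$ is preserved by every cotangent-lifted action). An explicit computation using the linearization of $\Theta_0$ at $(e,0,0)$ together with \eqref{eq:twoform} shows that $\omega_0$ and $\omega_1$ coincide as bilinear forms on $T_{(e,0,0)}$, and hence, by $G^L$-invariance, on the whole zero section $G\times\{0\}$. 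Applying the radial homotopy operator in the contractible transverse direction $\g_\mu^*\times\q$ to $\omega_1-\omega_0$, and then averaging over the compact group $H_\mu$, yields a $G^L\times H_\mu^T$-invariant 1-form $\alpha$ on an invariant neighborhood of $G\times\{0\}$ with $d\alpha=\omega_1-\omega_0$ and vanishing to second order along $G\times\{0\}$.

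With this $\alpha$ in hand, the Moser interpolation $\omega_t:=\omega_0+t\,d\alpha$ is symplectic for all $t\in[0,1]$ on a sufficiently small $G^L\times H_\mu^T$-invariant neighborhood of $G\times\{0\}$, and the equivariant time-dependent vector field $X_t$ determined by $\iota_{X_t}\omega_t=-\alpha$ inherits the second-order vanishing of $\alpha$ along the zero section. Consequently its flow $\phi_t$ is defined up to $t=1$ on a smaller invariant neighborhood $G\times D$, fixes $G\times\{0\}$ pointwise, and has identity derivative everywhere on $G\times\{0\}$; Moser's identity gives $\phi_1^*\omega_1=\omega_0$. Setting
$$
\Theta:=\Theta_0\circ\phi_1:G\times D\longrightarrow T^*G,
$$
we obtain a $G^L$- and $H_\mu^T$-equivariant diffeomorphism onto its image with $\Theta(e,0,0)=(e,\mu)$, $\Theta^*\omega_{T^*G}=\Omega$, and the prescribed linearization at $(e,0,0)$ (since $T_{(e,0,0)}\phi_1=\mathrm{Id}$ and $T_{(e,0,0)}\Theta_0$ is the map in property (5)). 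All five conditions of Definition \ref{def:simpletube} are thereby verified.

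The main technical obstacle is the equivariant construction of $\alpha$ with a second-order zero along the whole orbit $G\times\{0\}$: since $G$ is neither compact nor contractible, Poincaré's lemma cannot be applied on all of $G\times U$ and must instead be carried out transversely, exploiting the $G^L$-invariance of $\omega_1-\omega_0$ to reduce to the vector space $\g_\mu^*\times\q$ and then averaging over the compact group $H_\mu$ to restore $H_\mu^T$-equivariance. The second-order vanishing of $\alpha$, which the radial homotopy operator delivers automatically when applied to a 2-form vanishing along the submanifold, is precisely what guarantees that the prescribed linearization of $\Theta_0$—and hence property (5)—is preserved under composition with $\phi_1$.
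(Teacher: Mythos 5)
Your proposal is correct and follows essentially the same route as the paper's proof: the same explicit candidate $F(g,\nu,\lambda)=(g\exp(\lambda),\Ad^*_{\exp(\lambda)}(\mu+\nu))$ corrected by an equivariant Moser deformation on the model side, with the key point in both cases being that the Moser vector field vanishes (to second order) along the zero section $G\times\{0\}$, which yields a domain of the form $G\times D$ despite $G$ being non-compact and preserves the prescribed linearization. The only difference is how the Moser data are produced -- the paper interpolates two explicit invariant potentials, choosing $\theta_Y$ with the exact term $\lpair\mu,\dot\lambda\rpair$ precisely so that the difference of potentials vanishes along $G\times\{0\}$, while you build a primitive of $\omega_1-\omega_0$ via the fiberwise radial homotopy operator (which is already $G^L\times H_\mu^T$-equivariant, so the averaging step is harmless but redundant); both variants are sound.
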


\begin{proof}
As a first approximation we will consider the map
\begin{align}
F: G\times \g_\mu^*\times \q & \longrightarrow G\times \g^* \label{eq:defF}\\
(g,\nu,\lambda) & \longmapsto (g \exp(\lambda),\Ad_{ \exp(\lambda)}^*(\nu+\mu)) \nonumber
\end{align}
defined only for $\lambda$ small enough so that it is contained in the injectivity domanin of the group exponential $\exp:\g\to G$. The map $F$ is $G^L$-equivariant and also $H_\mu^T$-equivariant because $$F(g'g,\nu,\lambda)=(g'g\exp(\lambda),\Ad_{\exp(\lambda)}(\nu+\mu)),\quad\text{and}$$
\begin{eqnarray*}F(gh^{-1},\Ad^*_{h^{-1}}\nu,\Ad_h\lambda) & = & (gh^{-1}\exp(\Ad_h \lambda),\Ad_{\exp(\Ad_h \lambda)}^* (\Ad^*_{h^{-1}}\nu+\mu)\\ & = & (g\exp(\lambda)h^{-1},\Ad_{h^{-1}}^*\Ad_{ \exp(\lambda)}^*(\nu+\mu)). \end{eqnarray*}
Consider now the one-form on $G\times (\g_\mu^*\times \q)$ given by $\theta_Y(g,\nu,\lambda)(\xi,\dot \nu,\dot \lambda)=\lpair \nu+\mu,\xi\rpair+\frac{1}{2}\lpair \mu,\ad_\lambda \dot\lambda\rpair+\lpair \mu,\dot\lambda\rpair$. It is clearly $G^L$-invariant and  $H_\mu^T$-invariant because
\[\begin{split}
   \theta_Y(gh^{-1},\Ad^*_{h^{-1}}\nu,\Ad_h\lambda)(\Ad_h \xi,\Ad^*_{h^{-1}} \dot \nu,\Ad_h \dot \lambda)= \\
   =\lpair\Ad^*_{h^{-1}}(\nu)+\mu,\Ad_h\xi\rpair+\frac{1}{2}\lpair \mu,[\Ad_h\lambda, \Ad_h\dot\lambda]\rpair+\lpair \mu,\Ad_h\dot\lambda\rpair= \theta_Y(g,\nu,\lambda)(\xi,\dot \nu,\dot \lambda).
  \end{split}
\]
Let $u_i:=(\xi_i,\dot\nu_i,\dot\lambda_i)\in T_{(g,\nu,\lambda)} (G\times \g_\mu^*\times \q )$ with $i=1,2$ note that $(-\ed \theta_Y)(u_1,u_2)$ is the right-hand side of equation \eqref{eq:simpletube2form}.
Consider now the family of $G^L\times H_\mu^T$-invariant one-forms
$$\theta_t=tF^*\theta_{T^*G}+(1-t)\theta_Y$$ and define $\omega_t:=-\ed \theta_t$. Using \eqref{eq:twoform} and $T_{(e,0,0)} F(\xi,\dot \nu,\dot \lambda)=(\xi+\dot \lambda,\dot \nu+\ad^*_{\dot \lambda} \mu)$ it can be checked that
\[(-\ed\theta_t)(g,0,0)(\xi_1,\dot \nu_1,\dot \lambda_1)(\xi_2,\dot \nu_2,\dot \lambda_2)=\lpair \dot \nu_2,\xi_1\rpair-\lpair \dot \nu_1,\xi_2\rpair+\lpair \mu,[\xi_1,\xi_2]\rpair-\lpair \mu,[\dot \lambda_1,\dot \lambda_2]\rpair,\]
but this two-form is non-degenerate because it corresponds precisely to  $\Omega_Y$ of Theorem \ref{thm:classicalMGS}. This implies that Moser's equation $i_{X_t}\omega_t=\frac{\partial \theta_t}{\partial t}$ defines a time-dependent vector field $X_t$ on an open set $G\times V\subset G\times \g_\mu^*\times \q$. If $\Psi_t$ is the local flow of  $X_t$ then $\Psi_t^*\omega_t=\omega_0$ (see Theorem 7.3.1 of \cite{ortegaratiu} for technical details).
As $\theta_t$ and $-\ed\theta_t$ are $G^L\times H_\mu^T$ invariant differential forms, then the vector field $X_t$ is $G^L\times H_\mu^T$ invariant and, therefore, the local flow $\Psi_t$ is $G^L\times H_\mu^T$-equivariant for any $t$. Note that $\theta_Y(g,0,0)=\lpair \mu,\xi \rpair+\lpair \mu,\dot \lambda\rpair$ and $F^*\theta_{T^*G}(g,0,0)=\lpair \mu,\xi\rpair + \lpair \mu,\dot \lambda\rpair$. This implies that $\frac{\partial\theta_t}{\partial t}\rrestr{(g,0,0)}=0$ and $X_t(g,0,0)=0$ so $\Psi_t(g,0,0)=(g,0,0)$ for any $t\in \R$ and then there is a $H_\mu$-invariant open set $U\subset V$  such that $\Psi_1$ is a diffeomorphism with domain $G\times U$.

The simple $G$-tube will then be the composition $\Theta=F\circ \Psi_1:G\times U\longrightarrow T^*G$. It is $G^L\times H_\mu^T$-equivariant and it satisfies $\omega_Y=\omega_0=\Psi_1^*\omega_1=\Psi_1^*F^*\omega_{T^*G}=\Theta^*\omega_{T^*G}$ and $\Theta(e,0)=(e,\mu)$.
Let $\Psi_t$ be the local flow of $X_t$ and $\eta_t$ be any time-dependent tensor field then
\begin{equation}\frac{d}{dt}\Psi_t^*\eta_t=\Psi_t^*\left(\mathcal{L}_{X_t}\eta_t+\frac{d}{dt}\eta_t\right).\label{eq:dt_time_flow}\end{equation}
This expression can be used to compute $T_{(e,0,0)} \Theta$. To do so
let $Y$ be any time-independent vector field on $G\times \g_\mu^* \times \q$ not vanishing at $(e,0,0)$. As $X_t$ vanishes at $(e,0,0)$ then $\mathcal{L}_{X_t} Y \restr{(e,0,0)}=0$. Setting $\eta_t=Y$ in \eqref{eq:dt_time_flow} it gives $\frac{d}{dt}\Psi_t^* Y=0$ but this implies $T_{(e,0,0)}\Psi_1=\text{Id}$ and, therefore, $T_{(e,0,0)}\Theta (\xi,\dot \nu,\dot \lambda)=(\xi+\dot \lambda,\dot \nu+\ad^*_{\dot \lambda} \mu)$. That is, $\Theta$ satisfies all the five required conditions for a simple $G$-tube.
\end{proof}

The main shortcoming with the previous existence result is that, as happens with Theorem \ref{thm:classicalMGS}, it does not
 produce an explicit map and it relies on the integration of a time-dependent field. However, we will see in Section \ref{sec:explicitexamples} that in some particular cases we can explicitly describe these objects. Nevertheless, using momentum maps we can still find a simpler expression for the simple $G$-tube $\Theta$. Decompose $\Theta$ as $$\Theta(g,\nu,\lambda)=(A(g,\nu,\lambda),B(g,\nu,\lambda))\in G\times \g^*.$$ The property of $G^L$-equivariance
 implies that $A(g,\nu,\lambda)=gA(e,\nu,\lambda)$. As $\Theta(e,0,0)=(e,\mu)$ then $A(e,0,0)=e$ and  $B(e,0,0)=\mu$.

Using Section \ref{sec:reduced-MGS} we have that the product $G\times \g_\mu^*\times \q$ is equipped with $G^L$ and $H_\mu^T$ Hamiltonian actions with momentum maps $\mathbf{K}_{G^L}$ and $\mathbf{K}_{H_\mu^T}$  respectively (see \eqref{eq:Zmomentummaps}). We also have $G^L$ and $H_\mu^T$ Hamiltonian actions on $G\times\g^*$ and their momentum maps are $\J_{G^L}$ and $\J_{H_\mu^T}$ (see Proposition \ref{prop:T*G}). As the difference between two momentum maps is a locally constant function and both $\J_{G^L}$ and $\mathbf{K}_{G^L}$ are equivariant then$\J_{G^L}\circ \Theta=\mathbf{K}_{G^L}$, that is
\[\Ad^*_{A(g,\nu,\lambda)^{-1}}B(g,\nu,\lambda)=\Ad^*_{g^{-1}}(\nu+\mu)\]
so $B(g,\nu,\lambda)=\Ad^*_{A(g,\nu,\lambda)}\Ad^*_{g^{-1}}(\nu+\mu)=\Ad^*_{g^{-1}A(g,\nu,\lambda)}(\nu+\mu)=\Ad^*_{A(e,\nu,\lambda)}(\nu+\mu)$.
If we denote $E(\nu,\lambda)=A(e,\nu,\lambda)$ then we can write
\begin{align}
\Theta:G\times \g_\mu\times \q &\longrightarrow T^*G \nonumber\\
(g,\nu,\lambda)& \longmapsto (gE(\nu,\lambda),\Ad^*_{E(\nu,\lambda)}(\nu+\mu)). \label{eq:simpleGtube-E}
\end{align}
Therefore, a simple $G$-tube is determined by a function $E:U\subset \g_\mu^*\times \q\to G$. Rewriting Definition \ref{def:simpletube} in terms of the function $E$ we could obtain necessary and sufficient conditions for $E$.

\begin{rem}
Note that if $\g=\g_\mu$, which is the hypothesis used in \cite{MR2293645}, then $\q=0$ and the shifting map
 \begin{align*}
  G\times \g^* &\longrightarrow G\times \g^*\\
  (g,\nu) &\longmapsto (g,\nu+\mu)
 \end{align*}
is a simple $G$-tube.
\label{rem:isotropicmu}
\end{rem}

\begin{rem}
As $\Theta$ is $H_\mu^T$-equivariant, the momentum preservation argument gives
\begin{equation}\J_{H_\mu^T}(\Theta(g,\nu,\lambda))=\mathbf{K}_{H_\mu^T}(g,\nu,\lambda)=-\nu\rrestr{\h_\mu}+\frac{1}{2}\lambda\diamond_{\h_\mu}\ad^*_\lambda\mu. \label{eq:simpletubeH_mu_momentum}\end{equation}
That is, we have the condition $(\Ad^*_{E(\nu,\lambda)}(\nu+\mu))\rrestr{\h_\mu}=\nu\rrestr{\h_\mu}-\frac{1}{2}\lambda\diamond_{\h_\mu}\ad^*_\lambda\mu$. This property will be  useful later during the proof of Proposition \ref{prop:fiberedBL}.
\end{rem}

\subsection{Restricted $G$-tubes}

If $G$ acts freely on $Q$ we will see in Section \ref{sec:general-tube} that the simple $G$-tube is enough to construct explicitly the Hamiltonian tube for $T^*Q$ but for non-free actions we will need to adapt a simple $G$-tube to the corresponding isotropy subgroup, the result being the restricted $G$-tube.
 \begin{defn}
Given an adapted splitting $\g=\g_\mu\oplus \o\oplus\l\oplus \n$ as in Proposition \ref{prop:liealgsplitting}
 a restricted $G$-tube is a map
 \begin{equation*} \Phi:G\times U \subset G\times \g_\mu^*\times \o\times \l^* \longrightarrow T^*G %\label{eq:grestr}
 \end{equation*} such that:
 \begin{enumerate}
  \item $U$ is a connected $H_\mu$-invariant neighborhood of 0 in $\g_\mu^*\times \o\times \l^*$.
  \item $\Phi$ is a $G^L\times H_\mu^T$-equivariant diffeomorphism between $G\times U$ and $\Phi(G\times U)$ such that  $\Phi(e,0,0;0)=(e,\mu)$.
  \item Let $u_i:=(\xi_i,\dot\nu_i,\dot\lambda_i,\dot \eps_i)\in T_{(g,\nu,\lambda,\eps)} G\times \g_\mu^*\times \o\times \l^* $ with $i=1,2$, then $\Phi^*\omega_{T^*G}$ is
  \begin{equation}\begin{split}
  \omega_\text{restr}(u_1,u_2)=
  \lpair \dot \nu_2,\xi_1\rpair
  -\lpair \dot \nu_1,\xi_2\rpair
  +\lpair \nu+\mu,[\xi_1,\xi_2]\rpair-\lpair\mu,[\dot\lambda_1,\dot\lambda_2]\rpair
 \end{split}.\label{eq:restr2form}\end{equation}
  \item $\J_R(\Phi(g,\nu,\lambda,\eps))\restr{\l}=-\eps$ for any $(g,\nu,\lambda,\eps)$ where $\J_R$ is the momentum map for the $G^R$-action on $T^*G$ (see Proposition \ref{prop:T*G}).
 \end{enumerate}
\label{def:restrictedtube}
\end{defn}

If we are given a simple $G$-tube $\Theta$ then we can build a restricted $G$-tube $\Phi$ solving a non-linear equation. In fact, the restricted $G$-tube will be of the form $\Phi(g,\nu,\lambda,\eps)=\Theta(g,\nu,\lambda+\zeta(\nu,\lambda,\eps))$ for some map $\zeta:\g_\mu^*\times \o\times \l^*\to \n$. This is the main idea behind the following result.

\begin{prop}[Existence of restricted $G$-tubes]
Given an adapted splitting $\g=\g_\mu\oplus \o\oplus\l\oplus \n$ as in Proposition \ref{prop:liealgsplitting}
there is an $H_\mu$-invariant open neighborhood $D$ of $0\in \g_\mu^*\times \o\times \l^*$ and a restricted $G$-tube $\Phi:G\times D\rightarrow T^*G$.
\label{prop:restrtubes}
\end{prop}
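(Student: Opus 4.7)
The plan is to construct $\Phi$ as a correction of the simple $G$-tube $\Theta$ provided by Proposition \ref{prop:simpleGtube}, applied with the $H_\mu$-invariant complement $\q:=\o\oplus\l\oplus\n$. Writing $\Theta$ in the normalized form \eqref{eq:simpleGtube-E}, namely $\Theta(g,\nu,\xi)=(gE(\nu,\xi),\Ad^*_{E(\nu,\xi)}(\nu+\mu))$, I would seek $\Phi$ of the form
\[
\Phi(g,\nu,\lambda,\eps) \;:=\; \Theta\bigl(g,\,\nu,\,\lambda+\zeta(\nu,\lambda,\eps)\bigr)
\]
for a smooth $H_\mu$-equivariant map $\zeta:U\to\n$ with $\zeta(0,0,0)=0$. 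Under this ansatz condition (4) of Definition \ref{def:restrictedtube} translates into the single implicit equation
\[
F(\nu,\lambda,\eps;\zeta) \;:=\; \bigl(\Ad^*_{E(\nu,\lambda+\zeta)}(\nu+\mu)\bigr)\big|_\l \;=\; \eps,
\]
and the only nontrivial task is to solve this equation for $\zeta$.

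I would then apply the implicit function theorem at the origin. The consistency $F(0,0,0;0)=\mu|_\l=0$ holds in the working context, since $\l\subset\h$ and $\mu$ annihilates $\h$ for the cotangent-lifted actions to which the proposition will be applied. Property (5) of a simple $G$-tube, rewritten in terms of $E$, gives $T_{(0,0)}E(\dot\nu,\dot\xi)=\dot\xi$, whence a direct computation yields
\[
\partial_\zeta F(0,0,0;0)\cdot\dot\zeta \;=\; (\ad^*_{\dot\zeta}\mu)\big|_\l \;=\; -\Omega^\mu(\dot\zeta,\cdot)\big|_\l,\qquad \dot\zeta\in\n.
\]
By Proposition \ref{prop:liealgsplitting}(3), $\l$ and $\n$ are complementary Lagrangian subspaces of the symplectic space $(\l\oplus\n,\Omega^\mu)$, so this map is an isomorphism $\n\to\l^*$ and the IFT produces a unique smooth $\zeta$ defined on an open neighborhood of $0$ in $\g_\mu^*\times\o\times\l^*$. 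The $H_\mu$-equivariance of $\zeta$ is automatic: using $\Ad^*_h\mu=\mu$ for $h\in H_\mu$, the $H_\mu^T$-equivariance of $\Theta$ (hence of $E$), and the $H_\mu$-invariance of $\l$, one checks $F(\Ad^*_{h^{-1}}\nu,\Ad_h\lambda,\Ad^*_{h^{-1}}\eps;\Ad_h\zeta)=\Ad^*_{h^{-1}}F(\nu,\lambda,\eps;\zeta)$, and local uniqueness in the IFT then transfers the equivariance to $\zeta$. Shrinking if necessary gives the required $H_\mu$-invariant domain $D$.

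With $\zeta$ in hand, conditions (1), (2) and (4) of the definition follow immediately from the corresponding properties of $\Theta$ and the construction of $\zeta$. For condition (3) I would write $\Phi=\Theta\circ\Xi$ with $\Xi(g,\nu,\lambda,\eps)=(g,\nu,\lambda+\zeta(\nu,\lambda,\eps))$ and pull back \eqref{eq:simpletube2form}. The only delicate term is
\[
-\langle\mu,[\dot\lambda_1+D\zeta_1,\dot\lambda_2+D\zeta_2]\rangle,
\]
in which the two cross terms vanish by Proposition \ref{prop:liealgsplitting}(4) (since $\dot\lambda_i\in\o$ and $D\zeta_i\in\n$) and the pure-$\n$ term vanishes because $\n$ is $\Omega^\mu$-isotropic by Proposition \ref{prop:liealgsplitting}(3). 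Only $-\langle\mu,[\dot\lambda_1,\dot\lambda_2]\rangle$ survives, yielding exactly \eqref{eq:restr2form}. The diffeomorphism claim reduces via the inverse function theorem to injectivity of $T_0\Xi$ on the factor $\g_\mu^*\times\o\times\l^*$: the $\o$- and $\n$-components of $\lambda+D\zeta(\dot\eps)$ separate independently in the direct sum $\q=\o\oplus\l\oplus\n$, and $\partial_\eps\zeta(0)=(\partial_\zeta F(0;0))^{-1}$ is an isomorphism by the previous paragraph. The main obstacle is precisely this symplectic-pullback verification: the restriction $\zeta\in\n$ is forced so that the correction lives in the Lagrangian partner of $\l$ and simultaneously in the $\Omega^\mu$-orthogonal complement of $\o$, so all three transversality properties of Proposition \ref{prop:liealgsplitting} are required at once to leave the bracket term in \eqref{eq:simpletube2form} undisturbed.
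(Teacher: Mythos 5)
Your construction is essentially the paper's own proof: the paper likewise takes $\q=\o\oplus\l\oplus\n$, seeks $\Phi(g,\nu,\lambda;\eps)=\Theta(g,\nu,\lambda+\zeta(\nu,\lambda;\eps))$ with $\zeta$ valued in $\n$ and determined by the $\l$-momentum condition of Definition \ref{def:restrictedtube}, and inverts the corresponding map by the inverse function theorem using exactly your linearization (property 5 of the simple $G$-tube, the isomorphism $\n\to\l^*$, $\dot\zeta\mapsto \ad^*_{\dot\zeta}\mu\rrestr{\l}$, coming from the complementary isotropic pair $\l,\n$) and the same orthogonality properties of the adapted splitting to verify the pullback of \eqref{eq:simpletube2form}. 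Your explicit consistency check that $\mu\rrestr{\l}=0$ (valid in the cotangent-bundle setting because $\mu$ annihilates $\h=\g_q$) is in fact slightly more careful than the paper, which uses this fact tacitly when asserting $\Phi(e,0,0;0)=(e,\mu)$.
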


\begin{proof}

Define $\q=\o\oplus\l \oplus \n$. Using Proposition \ref{prop:simpleGtube} there exists a simple $G$-tube $\Theta$ defined on the symplectic space $Y:=G\times U\subset G\times (\g_\mu^*\times \q)$ with symplectic form $\omega_Y$ \eqref{eq:simpletube2form}. As $U\subset \g_\mu^*\times \q$ is a neighborhood of $0$ there are $H_\mu$-invariant neighborhoods of the origin $(\g_\mu^*)_r\subset \g_\mu^*$, $\o_r\subset \o$ and $\n_r\subset \n$ such that $(\g_\mu^*)_r\times(\o_r+\n_r)\subset U$.
Consider now the map
\begin{align*}
\iota_W:W=G\times ((\g_\mu^*)_r\times \o_r\times \n_r)& \longrightarrow Y=G\times U\subset G\times (\g_\mu^*\times \q) \\
(g,\nu,\lambda,\zeta) &\longmapsto (g,\nu,\lambda+\zeta)
\end{align*}
This map is a $G^L\times H_\mu^T$-equivariant  embedding.
By the properties of the adapted splitting (see Proposition \ref{prop:liealgsplitting}) $\Omega^\mu(\lambda,\zeta)=0$ if $\lambda\in \o$ and $\zeta\in\n$, therefore,
  \[\begin{split}
  (\iota_W^*\omega_Y)(u_1,u_2)=
  \lpair \dot \nu_2,\xi_1\rpair
  -\lpair \dot \nu_1,\xi_2\rpair
  +\lpair \nu+\mu,[\xi_1,\xi_2]\rpair-\lpair\mu,[\dot\lambda_1,\dot\lambda_2]\rpair
 \end{split}\]
where $u_i:=(\xi_i,\dot\nu_i,\dot\lambda_i,\dot \zeta_i)\in T_{(g,\nu,\lambda,\zeta)} G\times \g_\mu^*\times \o\times \n$ with $i=1,2$.
In order to obtain the restricted $G$-tube we will need to impose the relationship between $\eps$ and $\J_R$. To do so define the map
\begin{align*}
\psi: W &\longrightarrow G\times \g_\mu^*\times \o \times \l^* \\
(g,\nu,\lambda,\zeta) &\longmapsto (g,\nu,\lambda;-\J_R(\Theta(g,\nu,\lambda+\zeta))\rrestr{\l}).
\end{align*}
Note that this map is $G^L\times H_\mu^T$-equivariant because $$(g'g,\nu,\lambda;-\J_R(\Theta(g'g,\nu,\lambda+\zeta))\rrestr{\l})= (g'g,\nu,\lambda;-\J_R(g'\Theta(g,\nu,\lambda+\zeta))\rrestr{\l})=(g'g,\nu,\lambda;-\J_R(\Theta(g,\nu,\lambda+\zeta))\rrestr{\l})$$ and \begin{align*}
\psi(h\cdot^T (g,\nu,\lambda,\zeta)) &=\left(gh^{-1},\Ad^*_{h^{-1}}\nu,\Ad_h\lambda;-\J_R(\Theta(h\cdot^T(g,\nu,\lambda+\zeta))\rrestr{\l}\right)\\
&=\left(gh^{-1},\Ad^*_{h^{-1}}\nu,\Ad_h\lambda;-\J_R(h\cdot^T\Theta((g,\nu,\lambda+\zeta))\rrestr{\l}\right)\\
&=\left(gh^{-1},\Ad^*_{h^{-1}}\nu,\Ad_h\lambda;-\Ad^*_{h^{-1}}\J_R(\Theta((g,\nu,\lambda+\zeta))\rrestr{\l}\right)\\
&=h\cdot^T\left(g,\nu,\lambda;-\J_R(\Theta(g,\nu,\lambda+\zeta))\rrestr{\l}\right).
\end{align*}
Moreover, if we endow $G\times \g_\mu^*\times \o \times \l^*$ with the two-form \eqref{eq:restr2form} then $\psi^*\omega_\text{restr}=\iota_W^*\omega_Y$. We will now check that $\Psi$ is invertible. Let $v:=(\xi,\dot \nu,\dot \lambda,\dot \zeta)\in T_{(e,0,0,0)}G\times \g_\mu^*\times \o \times \n$, then
\begin{align*}
 T_{(e,0,0,0)}\psi \cdot v &= \left(\xi,\dot\nu,\dot\lambda;-T_{(e,0,0)}\left(\J_R\rrestr{\l}\circ \Theta\right)\cdot(\xi,\dot\nu,\dot\lambda+\dot\zeta)\right) \\
 &= \left(\xi,\dot\nu,\dot\lambda;-T_{(e,0)}\left(\J_R\rrestr{\l}\right)\cdot(\xi+\dot\lambda+\dot\zeta,\dot\nu+\ad^*_{\dot\lambda+\dot\zeta}\mu)\right) \\
 &= \left(\xi,\dot\nu,\dot\lambda;-(\dot\nu+\ad^*_{\dot\lambda+\dot\zeta}\mu)\rrestr{\l}\right)\\
 &= \left(\xi,\dot\nu,\dot\lambda;-(\ad^*_{\dot\zeta}\mu)\rrestr{\l}\right)
\end{align*}
where we have used the expression for $T_{(e,0,0)}\Theta$ given in Definition \ref{def:simpletube} and that $\ad^*_{\dot\lambda}\mu\rrestr{\l}=0$ since $\o$ and $\l$ are $\Omega^\mu$-orthogonal (see Proposition \ref{prop:liealgsplitting}).

  Since $\l$ and $\n$ are complementary isotropic subspaces, the map $\sigma:\n\to \l^*$ given by $\sigma(\zeta)= \ad^*_\zeta \mu \restr{\l}$ is a linear $H_\mu$-equivariant isomorphism.
This implies that $T_{(e,0,0,0)}\psi$ is invertible. By the Inverse Function Theorem
there is a neighborhood of $(e,0,0,0)\in G\times \g_\mu^*\times \o \times \l^*$ on which $\psi^{-1}$ is well defined. Due to $G^L\times H_\mu^T$ equivariance of $\Psi$
this neighborhood must be of the form $G\times D$ with $D\subset \g_\mu^*\times \o \times \l^*$ a $H_\mu$-invariant neighborhood of zero.

Note that the composition $\Theta\circ\iota_W\circ \psi^{-1}$ is a restricted $G$-tube because it satisfies $$(\Theta\circ\iota_W\circ \psi^{-1})^*\omega_{T^*G}=(\iota_W\circ\psi^{-1})^*\omega_Y=\omega_\text{restr},$$ it is $G^L\times H_\mu^T$-equivariant (because it is the composition of $G^L\times H_\mu^T$-equivariant maps), the origin $(e,0,0,0)$ is mapped to $(e,\mu)\in T^*G$, and it is a diffeomorphism onto its image (because it is a composition of diffeomorphisms onto its images). Finally, if $(g,\nu,\lambda,\eps)=\psi(g,\nu,\lambda,\zeta)$ then $\J_R(\Theta(g,\nu,\lambda+\zeta))\rrestr{\l}=-\eps$, that is $(\J_R\rrestr{\l}\circ \Theta\circ\iota_W\circ \psi^{-1})(g,\nu,\lambda,\eps)=-\eps$, which is the condition needed for a restricted $G$-tube.

To sum up, the composition $\Phi:=\Theta\circ\iota_W\circ \psi^{-1}:G\times D \to T^*G$ is a restricted $G$-tube. This map can also be written as
\begin{align}
\Phi: G\times D\subset G\times \g_\mu^*\times \o \times \l^* &\longrightarrow T^*G \label{eq:implicitPhi}\\
(g,\nu,\lambda;\eps) &\longmapsto \Theta(g,\nu,\lambda+ \zeta(\nu,\lambda;\eps)) \nonumber
\end{align}
where $\zeta:D\subset \g_\mu^*\times \o\times \l^*\to \n$ is determined by the equation $\J_R\rrestr{\l}(\Phi(g,\nu,\lambda+\zeta,\eps))=-\eps$.

\end{proof}

\section{Cotangent bundle Hamiltonian tubes}
\label{sec:cotangenttubes}

Let $G$ be a Lie group acting properly on $Q$, and fix $z\in T^*Q$. In this section we will construct a Hamiltonian tube for the cotangent-lifted action of $G$ on $T^*Q$ around $z$ that will be explicit except for the computation of a restricted $G$-tube. This Hamiltonian tube will be a generalization of the construction  in \cite{MR2293645} under the hypothesis $G_\mu=G$.

\subsection{Cotangent-lifted Palais model}
We will first reduce the problem on $T^*Q$ to a problem on $T^*(G\times_H S)$. This first simplification is already discussed in \cite{MR2293645}.
Recall the well-known regular reduction theorem for cotangent bundles at zero momentum \cite{MR0448428}.

\begin{thm}[Regular cotangent reduction at zero] Let $G$ act freely and properly by cotangent lifts on $T^*Q$ with momentum map $\J$. Denote by $\pi_G$ the projection $Q\to Q/G$ and consider the map $\varphi:\J^{-1}(0)\to T^*(Q/G)$ defined by $\lpair \varphi(z), T\pi_G(v)\rpair=\lpair z,v\rpair$ for every $z\in T^*_qQ$ and $v\in T_qQ$. Let $\pi_0$ and $\iota$ by the natural projection $\pi_0:\J^{-1}(0)\to \J^{-1}(0)/G$ and the inclusion  $\iota:\J^{-1}(0)\to T^*Q$.
The map $\varphi$ is a $G$-invariant surjective submersion that induces a symplectomorphism
\[\bar\varphi:\J^{-1}(0)/G\longrightarrow T^*(Q/G)\]
where $\J^{-1}(0)/G$ is endowed with the unique symplectic form $\omega_0$ satisfying and $\pi_0^*\omega_0=\iota^*\omega_{T^*Q}$.
\label{thm:zerocotangentred}
\end{thm}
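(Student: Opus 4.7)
The natural strategy is to prove the symplectomorphism claim through the tautological one-forms: I will establish $\varphi^*\theta_{Q/G}=\iota^*\theta_Q$, after which the symplectic statement is automatic by taking exterior derivatives and invoking the defining property of $\omega_0$. Before that I would verify the geometric assertions on $\varphi$. To see it is well-defined, fix $z\in \J^{-1}(0)\cap T_q^*Q$ and suppose $v_1,v_2\in T_qQ$ project to the same tangent vector in $Q/G$. Freeness of the action makes $\xi\mapsto \xi_Q(q)$ injective with image $\g\cdot q=\ker T_q\pi_G$, so $v_1-v_2=\xi_Q(q)$ for a unique $\xi\in\g$, and then \eqref{eq:TQmomentum} gives $\langle z,v_1-v_2\rangle=\langle \J(z),\xi\rangle=0$. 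The $G$-invariance $\varphi(g\cdot z)=\varphi(z)$ is immediate from the equivariance of $\pi_G$, since $T\pi_G(g^{-1}\cdot w)=T\pi_G(w)$ for any $w\in T_{g\cdot q}Q$.

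Next I would show $\varphi$ is a surjective submersion and that the quotient map $\bar\varphi$ is a diffeomorphism. For surjectivity, given $\alpha\in T_{[q]}^*(Q/G)$, the pullback $\pi_G^*\alpha\in T_q^*Q$ annihilates $\g\cdot q=\ker T_q\pi_G$, hence lies in $\J^{-1}(0)$ and satisfies $\varphi(\pi_G^*\alpha)=\alpha$; this explicit local section also proves the submersion property. The fibers of $\varphi$ coincide with $G$-orbits: if $\varphi(z_1)=\varphi(z_2)$, their projections to $Q$ lie in the same orbit, so by freeness we can move $z_1$ by the unique group element equalizing base points; then $z_1,z_2\in T_q^*Q$ with $\langle z_1-z_2,v\rangle=\langle\varphi(z_1)-\varphi(z_2),T\pi_G(v)\rangle=0$ for every $v\in T_qQ$, forcing $z_1=z_2$. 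Consequently $\varphi$ factors through a smooth bijection $\bar\varphi:\J^{-1}(0)/G\to T^*(Q/G)$, which is a diffeomorphism because $\varphi$ is a submersion and $\pi_0$ is a surjective submersion.

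Finally I would perform the tautological-form computation. Write $\tau$ and $\tau'$ for the cotangent bundle projections of $Q$ and $Q/G$, respectively. The commutativity $\tau'\circ\varphi=\pi_G\circ\tau\circ\iota$ and the defining property of $\varphi$ yield, for $z\in\J^{-1}(0)$ and $v\in T_z\J^{-1}(0)$,
\[
(\varphi^*\theta_{Q/G})_z(v)=\langle \varphi(z),T\pi_G\,T\tau\,T\iota(v)\rangle=\langle z,T\tau\,T\iota(v)\rangle=(\iota^*\theta_Q)_z(v).
\]
Taking minus the exterior derivative gives $\varphi^*\omega_{Q/G}=\iota^*\omega_Q=\pi_0^*\omega_0$. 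Since $\varphi=\bar\varphi\circ\pi_0$ and $\pi_0$ is a surjective submersion (so its pullback is injective on forms), this forces $\bar\varphi^*\omega_{Q/G}=\omega_0$, which is exactly the symplectomorphism claim. The one nontrivial step is verifying that the fibers of $\varphi$ are single $G$-orbits so that $\bar\varphi$ is a bijection; the rest reduces to a one-line computation exploiting the tautological nature of $\theta_Q$ and $\theta_{Q/G}$.
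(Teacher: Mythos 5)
The paper does not prove this statement at all: it is quoted as the classical regular cotangent reduction theorem at zero momentum and attributed to \cite{MR0448428}, so there is no in-paper argument to compare against. Your proof is the standard one and is essentially correct: well-definedness of $\varphi$ from $\ker T_q\pi_G=\g\cdot q$ together with $\J(z)=0$, $G$-invariance from equivariance of $\pi_G$, identification of the fibers of $\varphi$ with $G$-orbits, and above all the tautological-form identity $\varphi^*\theta_{Q/G}=\iota^*\theta_Q$, from which $\bar\varphi^*\omega_{Q/G}=\omega_0$ follows by exterior differentiation and injectivity of $\pi_0^*$ on forms.

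Two points are glossed over and would need a sentence each in a complete write-up. First, smoothness of $\varphi$ and the submersion property: the assignment $\alpha\mapsto\pi_G^*\alpha$ is a section of $\varphi$ only after choosing a local section $\sigma$ of the principal bundle $Q\to Q/G$ to fix the base point, and one then uses $G$-invariance of $\varphi$ to transport surjectivity of $T\varphi$ from points on the image of that section to every point of the corresponding $G$-orbit; this also supplies the smoothness of $\varphi$ via the local trivialization $T^*Q\rrestr{\pi_G^{-1}(U)}\cong T^*(U\times G)$. Second, you implicitly use that $\J^{-1}(0)$ is a submanifold and that $\J^{-1}(0)/G$ carries a smooth structure with $\pi_0$ a surjective submersion; for a free and proper lifted action $0$ is a regular value of $\J$ and the quotient is a manifold, so this is standard, but it is the hypothesis that makes the phrase ``unique symplectic form $\omega_0$ with $\pi_0^*\omega_0=\iota^*\omega_{T^*Q}$'' meaningful and should be acknowledged. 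With these remarks added, the argument is a complete and correct proof of the cited theorem.
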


Let $G$ be a Lie group and $H$ a compact subgroup that acts linearly on the linear space $S$. Let $S_r$ be a $H$-invariant open neighborhood of $0\in S$. We will consider the symplectic space $T^*(G\times S_r)$ that can be identified with $G\times \g^*\times S_r\times S^*$ using the left-trivialization of $G$ and the linear structure of $S$. In Section \ref{sec:twisted} we introduced the $G^L$ and $H^T$ actions on the space $G\times S_r$. These actions can be lifted to Hamiltonian actions on $T^*(G\times S_r)$. More explicitly, using Proposition \ref{prop:T*G} and the diamond notation we have
\begin{itemize}
 \item cotangent-lifted $G^L$-action: $g'\cdot^L (g,\nu,a,b)=(g'g,\nu,a,b)$ with momentum map $$\J_{G^L}(g,\nu,a,b)=\Ad^*_{g^{-1}}\nu.$$
 \item cotangent-lifted $H^T$-action: $h\cdot^T (g,\nu,a,b)=(gh^{-1},\Ad^*_{h^{-1}} \nu, h\cdot a,h\cdot b)$ with momentum map $$\J_{H^T}(g,\nu,a,b)=-\nu\rrestr{\h}+a\diamond b.$$
\end{itemize}

Theorem \ref{thm:zerocotangentred} applied to $G\times S_r$ with the $H^T$-action gives the diagram
\begin{equation}\xymatrix{
 \J^{-1}_{H^T}(0) \ar@{^{(}->}[r] \ar[dr]^\varphi \ar[d] & T^*(G\times S_r)  \\
 \J^{-1}_{H^T}(0)/H^T \ar[r]^{\bar\varphi} & T^*(G\times_H S_r)
 } \label{eq:varphi-Z}
\end{equation}
but, since $\J_{H^T}^{-1}(0)$ is $G^L$-invariant the quotient $\J_{H^T}^{-1}(0)/H^T$ supports a $G$-action that it can be checked to be Hamiltonian with momentum map $\J_{\text{red}}([g,\nu,a,b]_{H^T})=\Ad_{g^{-1}}^* \nu$. That is, $\bar\varphi$ is in fact a $G$-equivariant symplectomorphism and $\varphi$ is a $G^L$-equivariant surjective submersion.

Consider now a Lie group $G$ acting properly on a general manifold $Q$ and by cotangent lifts on $T^*Q$ with momentum map $\J_{T^*Q}$. Our goal is to construct a Hamiltonian tube around an arbitrary point $z\in T^*Q$. For that, let $q=\tau(z)$ where $\tau:T^*Q\to Q$ is the projection, define $H:=G_q$ and consider a linear slice  at $q$, that is a $H$-invariant complement $S$ to $\g\cdot q$ in $T_qQ$. Using Palais' model \eqref{eq:Palais} there is a $H$-invariant neighborhood $S_r\subset S$ and a $G$-equivariant diffeomorphism
\(\mathbf{s}:G\times_{H} S_r\longrightarrow U\subset T^*Q\) satisfying $\mathbf{s}([e,0]_{H})=q$. As $\mathbf{s}$ is a diffeomorphism the cotangent lift $T^*\mathbf{s}^{-1}:T^*(G\times_H S_r)\longrightarrow \pi^{-1}(U)\subset Q$ 
is a $G$-equivariant symplectomorphism onto $T^*U\cong \tau^{-1}(U)\subset T^*Q$.

If we denote  $\alpha=z\rrestr{S}$ and $\mu=\J_{T^*Q}(z)$, then $T^*\mathbf{s}^{-1}(\varphi(e,\mu,0,\alpha))=z$ since $\tau(\varphi(e,\mu,0,\alpha))=[e,0]_H=\mathbf{s}^{-1}(q)$,
and as any $v\in T_qQ$ can be decomposed as $v=\xi\cdot q+\dot a$ with $\xi\in \g$
and $\dot a\in S$ then

\begin{align*}
 \langle T_{q}^*\mathbf{s}^{-1}(\varphi(e,\mu,0,\alpha)),v\rangle &= \langle \varphi(e,\mu,0,\alpha),T_q \mathbf{s}^{-1}\cdot v\rangle= \langle \varphi(e,\mu,0,\alpha),(\xi,\dot a)\rangle= \\
 &=\langle(\mu,\alpha),(\xi,\dot a)\rangle=\langle \mu,\xi\rangle + \langle \alpha,\dot a\rangle= \\
 &=\langle \J_{T^*Q}(z),\xi\rangle +\langle z,\dot a\rangle = \langle z,\xi\cdot q\rangle + \langle z,\dot a\rangle =\langle z,v\rangle.
\end{align*}

Therefore, from now on we will assume without loss of generality  $Q=G\times_H S_r$ and  $z=\varphi([e,\mu,0,\alpha]_{H})$ with $\mu\in \g^*$ and $\alpha\in S^*$. Moreover, this simplification is explicit up to the exponential of a metric.

The $G$-isotropy of $z$ is $G_z=H_\alpha \cap G_\mu$ since $G_z=G_{[e,\mu,0,\alpha]_H}=H_{(\mu,\alpha)}=H_\mu\cap H_\alpha=G_\mu \cap H_\alpha$. If $\p$ is a $H_\mu$-invariant complement of $\h_\mu$ in $\g_\mu$, $\s$ is a complement of $\g_z$ in $\h_\mu$ and $B=(\h_\mu\cdot \alpha)^\circ\subset S$ then, using Theorems \ref{thm:symplnorm-cotang} and \ref{thm:classicalMGS}, and the adapted splitting of Proposition \ref{prop:liealgsplitting}, the Hamiltonian tube around $z$ has to be a map of the form
\begin{equation}\label{eq:Talpha}\mathcal{T}:G\times_{G_z}\big(\underbrace{(\s^*\oplus \p^*)}_{\m^*}\times \underbrace{\o\times B\times B^*}_N\big)\longrightarrow T^*(G\times_H S).\end{equation}
The first difficulty that we find is that the MGS model is a $G_z$-quotient but the target space is a $H$-quotient. For this reason, instead of constructing directly the tube we are going to split it as the composition of two maps: one that goes from an $H_\mu$-quotient to an $H$-quotient (essentially this is done by a restricted $G$-tube)
 and another that goes from a $G_z$-quotient to an $H_\mu$-quotient. We will carefully explain this process in the following sections.

\subsection{The $\alpha=0$ case}

In this section we will construct a Hamiltonian around  a point of the form $z_0=\varphi(e,\mu,0,0)\in T^*(G\times_H S)$ which is explicit up to a restricted $G$-tube. Notice that $G_{z_0}=G_\mu\cap H_\alpha=H_\mu$ and by Theorem \ref{thm:symplnorm-cotang} and the adapted splitting of Proposition \ref{prop:liealgsplitting} the symplectic slice is $N_0=\o\times S\times S^*$ with symplectic form \eqref{eq:symplslice}. Then the map \eqref{eq:Talpha} reduces in this case to
\[\mathcal{T}_0:G\times_{H_\mu}(\underbrace{\p^*}_{\m^*}\times \underbrace{\o\times S\times S^*}_{N_0})\longrightarrow T^*(G\times_H S)\]
where $G\times_{H_\mu}(\p^*\times \o\times S\times S^*)$ is equipped with the symplectic form \eqref{eq:MGSsymp}.
As the general Hamiltonian tube will eventually factor through   $\mathcal{T}_0$ we will need to ensure that its domain is large enough, and in particular it should contain all the points of the form $[e,0,0,0,b]_{H_\mu}$ for all $b\in S^*$.

\begin{thm}\label{thm:tube0}
Consider the point $z_0=\varphi(e,\mu,0,0)\in T^*(G\times_H S)$. Let $\g=\g_\mu\oplus \o\oplus\l\oplus \n$ be an adapted splitting in the sense of Proposition \ref{prop:liealgsplitting} and let $\Phi:G\times U_\Phi \longrightarrow T^*G$ be an associated restricted $G$-tube.
In this context, there are $H_\mu$-invariant open neighborhoods of zero: $\p^*_r\subset \p^*$, $\o_r\subset \o$ and an $H$-invariant open neighborhood of zero $\h_r^*\subset \h^*$ such that the map
\begin{align}
\mathcal{T}_0:  G\times_{H_\mu}\left(\p^*_r\times \o_r\times (T^*S)_r\right) &\longrightarrow T^*(G\times_H S) \label{eq:alpha0tube}\\
[g,\nu,\lambda;a,b]_{H_\mu} &\longmapsto \varphi(\Phi(g,\tilde \nu,\lambda;a\diamond_\l b);a,b) \nonumber
\end{align}
is a Hamiltonian tube around the point $z_0$, where \[\tilde \nu=\nu+\underbrace{\frac{1}{2}\lambda \diamond_{\h_\mu} \ad_\lambda^*\mu+a\diamond_{\h_\mu} b}_{\J_{N_0}(\lambda,a,b)}\] and $(T^*S)_r:=\{(a,b)\in T^*S \mid a\diamond_\h b\in \h_r^*\}$.
\end{thm}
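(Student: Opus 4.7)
The strategy is to lift $\mathcal{T}_0$ through the cotangent reduction $\varphi$ of diagram \eqref{eq:varphi-Z}. Define
\[\widetilde{\mathcal{T}}_0:G\times\bigl(\p^*_r\times\o_r\times(T^*S)_r\bigr)\longrightarrow T^*(G\times S),\quad (g,\nu,\lambda,a,b)\longmapsto \bigl(\Phi(g,\tilde\nu,\lambda;a\diamond_\l b),\,a,\,b\bigr),\]
so that $\mathcal{T}_0\circ\pi_{H_\mu}=\varphi\circ\widetilde{\mathcal{T}}_0$, where $\pi_{H_\mu}$ is the $H_\mu^T$-quotient projection. I first need $\widetilde{\mathcal{T}}_0$ to take values in $\J_{H^T}^{-1}(0)$. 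Writing $\Phi(g,\tilde\nu,\lambda;a\diamond_\l b)=(g_\Phi,\nu_\Phi)$, the condition $-\nu_\Phi\rrestr{\h}+a\diamond b=0$ decouples along $\h=\h_\mu\oplus\l$. On $\l$ this is immediate from Definition \ref{def:restrictedtube}(4) with $\eps=a\diamond_\l b$. On $\h_\mu$ one applies the simple-tube momentum identity \eqref{eq:simpletubeH_mu_momentum} to $\Theta(g,\tilde\nu,\lambda+\zeta)=\Phi(g,\tilde\nu,\lambda,\eps)$; the cross-terms arising from expanding $(\lambda+\zeta)\diamond_{\h_\mu}\ad^*_{\lambda+\zeta}\mu$ all vanish by a short Jacobi argument using $\Omega^\mu(\o,\n)=0$, $\Omega^\mu(\n,\n)=0$ (Proposition \ref{prop:liealgsplitting}) and $\ad^*_{\h_\mu}\mu=0$. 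Substituting $\tilde\nu=\nu+\J_{N_0}(\lambda,a,b)$ and using that $\nu\in\p^*\subset\h_\mu^\circ$ together with $\mu\rrestr{\h}=0$ (which follows from $\xi_Q([e,0]_H)=0$ for $\xi\in\h$ and \eqref{eq:TQmomentum}), this collapses to $\nu_\Phi\rrestr{\h_\mu}=a\diamond_{\h_\mu}b$, as required.

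Well-definedness on the $H_\mu^T$-quotient follows from the $H_\mu^T$-equivariance of $\Phi$ combined with the equivariance of the diamond product, which makes both $\tilde\nu$ and $a\diamond_\l b$ transform coadjointly; since $H_\mu\subset H$, this equivariance is absorbed by $\varphi$. Then $\mathcal{T}_0([e,0,0;0,0]_{H_\mu})=\varphi(\Phi(e,0,0;0),0,0)=\varphi(e,\mu,0,0)=z_0$ by Definition \ref{def:restrictedtube}(2), while $G$-equivariance is inherited from the $G^L$-equivariance of $\Phi$ and $\varphi$. For the symplectic identity, Theorem \ref{thm:zerocotangentred} gives $\varphi^*\omega_{T^*(G\times_H S)}=\omega_{T^*(G\times S)}\rrestr{\J_{H^T}^{-1}(0)}$, so it suffices to show $\widetilde{\mathcal{T}}_0^*\omega_{T^*(G\times S)}=\pi_{H_\mu}^*\Omega_Y$. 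Splitting $\omega_{T^*(G\times S)}=\omega_{T^*G}+\omega_{T^*S}$, the $T^*S$-part trivially contributes $\lpair\dot b_2,\dot a_1\rpair-\lpair\dot b_1,\dot a_2\rpair$; by property (3) of the restricted $G$-tube \eqref{eq:restr2form}, the $T^*G$-part pulls back to
\[\lpair\dot{\tilde\nu}_2,\xi_1\rpair-\lpair\dot{\tilde\nu}_1,\xi_2\rpair+\lpair\tilde\nu+\mu,[\xi_1,\xi_2]\rpair-\lpair\mu,[\dot\lambda_1,\dot\lambda_2]\rpair.\]
Since $\dot{\tilde\nu}=\dot\nu+T_{(\lambda,a,b)}\J_{N_0}(\dot\lambda,\dot a,\dot b)$, adding the two contributions and expanding $\Omega_{N_0}$ via \eqref{eq:symplslice} reproduces the MGS form \eqref{eq:MGSsymp} exactly, with $\m^*=\p^*$ and $N=N_0=\o\times T^*S$.

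Finally, at $[e,0,0,0,0]_{H_\mu}$ the differential of $\mathcal{T}_0$ is injective (using property (5) of the simple tube, inherited by $\Phi$, and the identification of $\ker T\pi_{H_\mu}$ with the $H_\mu^T$-orbit direction), and a dimension count based on $\dim\g=\dim\g_\mu+\dim\o+2\dim\l$ confirms that both source and target have dimension $\dim T^*(G\times_H S)$; hence $\mathcal{T}_0$ is a local symplectic diffeomorphism at the base point, and $G$-equivariance extends this to a $G$-invariant neighborhood of the orbit. For the specified domain $\p^*_r\times\o_r\times(T^*S)_r$, the only constraint is that $(\tilde\nu,\lambda,a\diamond_\l b)$ lie in the domain $U_\Phi$ of $\Phi$: $\p^*_r$ and $\o_r$ control the $\g_\mu^*$ and $\o$ inputs directly, while imposing $a\diamond_\h b\in\h_r^*$ for a small $H$-invariant $\h_r^*\subset\h^*$ simultaneously controls the $\h_\mu^*$-contribution $a\diamond_{\h_\mu}b$ entering $\tilde\nu$ and the $\l^*$-input $a\diamond_\l b$. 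The main technical obstacle is the $\h_\mu$-momentum cancellation used in the first paragraph: the non-explicit $\n$-shift $\zeta$ hidden in $\Phi$ only disappears once one fully exploits the orthogonality of the adapted splitting, which is precisely why Proposition \ref{prop:liealgsplitting} is formulated with these properties.
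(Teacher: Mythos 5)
Your first two paragraphs (the lift $\widetilde{\mathcal{T}}_0$, the check that it lands in $\J_{H^T}^{-1}(0)$ by splitting along $\h=\h_\mu\oplus\l$ and using Definition \ref{def:restrictedtube}(4) together with \eqref{eq:simpletubeH_mu_momentum}, and the pullback computation reproducing \eqref{eq:MGSsymp}) are sound and match the paper's Step 1, which organizes the same facts through the reduction diagram of Section \ref{sec:reduced-MGS} and the map $L$ of \eqref{eq:Lmap}. The genuine gap is in your last paragraph: you conclude only that $\mathcal{T}_0$ is a local symplectic diffeomorphism near the central orbit and that equivariance extends injectivity to \emph{some} $G$-invariant neighborhood, and you treat the condition $a\diamond_\h b\in\h_r^*$ as nothing more than a bookkeeping device ensuring $(\tilde\nu,\lambda,a\diamond_\l b)\in U_\Phi$. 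But the content of the theorem is precisely that the domain can be taken to be $G\times_{H_\mu}(\p^*_r\times\o_r\times(T^*S)_r)$ with $(T^*S)_r=\{(a,b)\mid a\diamond_\h b\in\h_r^*\}$, a set that is \emph{unbounded} in the fiber direction (it contains every point $[e,0,0,0,b]_{H_\mu}$, $b\in S^*$); this is what the subsequent $\Gamma$-shift in Theorem \ref{thm:shift} (which sends $b\mapsto b+\alpha$) and the fibered Bates--Lerman Proposition \ref{prop:fiberedBL} require. Local injectivity near the orbit $G\cdot[e,0,0,0,0]_{H_\mu}$ plus properness of the action only yields injectivity on a set of the form $G\times_{H_\mu}(\p^*_{\mathrm{inj}}\times\o_{\mathrm{inj}}\times(T^*S)_{\mathrm{inj}})$ with $(T^*S)_{\mathrm{inj}}$ a small neighborhood of $0$, and points with large $b$ lie far from that orbit, so global injectivity on $(T^*S)_r$ does not follow from your argument.

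The missing ingredient is the paper's Step 3: one chooses $\h_r^*$ via the openness of the homogeneous momentum map (Proposition \ref{prop:openmomentum}, Montaldi--Tokieda) so that every value $a\diamond_\h b\in\h_r^*$ is also attained by a pair $(\alpha_n,\beta_n)$ in a relatively compact set $U_1\times U_2\subset (T^*S)_{\mathrm{inj}}$, and then runs a sequence argument to show that $\mathcal{T}_0\circ\pi_{H_\mu}$ restricted to $G\times\p^*_{\mathrm{inj}}\times\o_{\mathrm{inj}}\times(T^*S)_r$ is proper onto its image; being a proper local homeomorphism, $\mathcal{T}_0$ is a covering map onto its image, and since $\varphi(e,\mu,0,0)$ has a single preimage (forced by $a=b=0$ in \eqref{eq:alpha0tube} and the already-established injectivity on the small set), the covering is trivial and $\mathcal{T}_0$ is a global diffeomorphism onto its image. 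Without this properness/covering argument (or a substitute for it), your proof establishes a Hamiltonian tube only on a small neighborhood of the orbit, i.e.\ essentially the statement already guaranteed by Theorem \ref{thm:classicalMGS}, not the fiber-global statement of Theorem \ref{thm:tube0}; note also that, contrary to your closing remark, the $\h_\mu$-momentum cancellation is the routine part, while the enlargement of the domain in the $T^*S$ direction is where the real work lies.
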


\begin{proof}

If we assume the existence of $(\p^*)_r,\o_r$ and $\h_r$ such that the map $\mathcal{T}_0$ is well defined then it follows from the properties of $\Phi$ that
\[\mathcal{T}_0([e,0,0;0,0]_{H_\mu})=\varphi(\Phi(e,0,0;0);0,0)=\varphi(e,\mu;0,0)\] and by the $G$-equivariance of $\Phi$ it is also clear that
\begin{align*}
\mathcal{T}_0(g'\cdot [g,\nu,\lambda;a,b]_{H_\mu})&=\mathcal{T}_0([g'g,\nu,\lambda;a,b]_{H_\mu})=\varphi(\Phi(g'g,\tilde \nu,\lambda;a\diamond_\l b);a,b)=\\
&=\varphi(g'\cdot \Phi(g,\tilde \nu,\lambda;a\diamond_\l b);a,b)=g'\cdot \varphi(\Phi(g,\tilde \nu,\lambda;a\diamond_\l b);a,b)=\\ &=g'\cdot \mathcal{T}_0([g,\nu,\lambda;a,b]_{H_\mu}).
\end{align*}

We will divide the rest of the proof in three steps. In the first one we prove that there is a set  $G\times_{H_\mu}\left(\p^*_\text{dom}\times \o_\text{dom} \times (T^*S)_\text{dom}\right)$ such that the map $\mathcal{T}_0$ is well defined, it pulls-back the natural symplectic form of $T^*(G\times_H S)$ to the MGS form $G\times_{H_\mu}\left(\p^*_\text{dom}\times \o_\text{dom} \times (T^*S)_\text{dom}\right)$ and it is a local diffeomorphism. In the second one we will show that it is injective in a certain subset and in the third we will prove that it is a diffeomorphism onto its image.

\paragraph{1- $\mathcal{T}_0$ is a local symplectomorphism:\\}

Let $N_0=\o\times S \times S^*$ be the symplectic slice at $z_0=\varphi(e,\mu,0,0)$.
As in Section \ref{sec:reduced-MGS}, there must be an $H_\mu$-invariant neighborhood $(\g_\mu^*)_r$ such that the product $Z:=G\times (\g_\mu^*)_r\times (\o\times S\times S^*)$ with   $\omega_Z:=\omega_{T_\mu}+\Omega_{N_0}$ is a symplectic manifold with $G^L$ and $H_\mu^T$ Hamiltonian actions with momentum maps $\mathbf{K}_{G^L}$ and $\mathbf{K}_{H_\mu^T}$ (see \eqref{eq:Zmomentummaps}).

We will now use the restricted $G$-tube (see Definition \ref{def:restrictedtube}) $\Phi:G\times U_\Phi\subset G\times \g_\mu^*\times \o\times \l^*\to T^*G$
to relate $Z$ with $T^*(G\times S)$. As $\Phi$ is only defined on $G\times U_\Phi$
we will define the open set $$D:=\{(\nu,\lambda,a,b)\mid (\nu,\lambda,a\diamond_\l b)\in U_\Phi,\quad \nu\in (\g_\mu^*)_r\}\subset \g_\mu^*\times \o\times S\times S^*$$ and the map
\begin{align}
f:G\times D &\longrightarrow T^*G \times T^*S  \nonumber\\
(g,\nu,\lambda,a,b) &\longmapsto (\Phi(g,\nu,\lambda,a\diamond_\l b),a,b). \label{eq:f-function}
\end{align}
The pullback of $\omega_{T^*(G\times S)}$ by $f$ is $\omega_Z$, because
\begin{align*}
(f^*\omega_{T^*(G\times S)})(u_1,u_2) &=(\Phi^*\omega_{T^*G})(g,\nu,\lambda,a\diamond_\l b)(v_1,v_2)+\omega_{T^*S}(a,b)(w_1,w_2)=\\
 &= \underbrace{\lpair \dot \nu_2,\xi_1\rpair
  -\lpair \dot \nu_1,\xi_2\rpair
  +\lpair \nu+\mu,[\xi_1,\xi_2]\rpair}_{\omega_{T_\mu}}-\underbrace{\lpair\mu,[\dot\lambda_1,\dot\lambda_2]\rpair+\lpair \dot b_2,\dot a_1\rpair -\lpair b_1,\dot b_1\rpair}_{\Omega_{N_0}}
\end{align*}
where $u_i=(\xi_i,\dot \nu_i,\dot \lambda_i,\dot a_i,\dot b_i)\in T_{(g,\nu,\lambda,a,b)} (G\times D)$.

Note that on $G\times D$ there is a $G^L\times H_\mu^T$ action but on $T^*G\times T^*S\cong G\times \g^*\times S\times S^*$ there is a $G^L\times H^T$ action. As the map $f$ is $G^L\times H_\mu^T$-equivariant it preserves the $H_\mu$-momentum, that is, $\mathbf{K}_{H_\mu^T}=\J_{H_\mu^T}\circ f$. In particular $f(\mathbf{K}_{H_\mu^T}^{-1}(0))\subset \J_{H_\mu^T}^{-1}(0)$. However, the $\l$-momentum property (see Definition \ref{def:restrictedtube}) of restricted $G$-tubes allows us to improve this since for any $\xi\in \l$
\begin{align*}
  \langle\J_{H^T}(f(g,\nu,\lambda;a,b)),\xi \rangle &= \langle \J_R(\Phi(g,\nu,\lambda;a\diamond_\l b))+a\diamond_\h b,\xi\rangle=\\
  &= \langle \J_R(\Phi(g,\nu,\lambda;a\diamond_\l b))\rrestr{\l}+a\diamond_\l b,\xi\rangle= \\
  &=\langle -a\diamond_\l b+a\diamond_\l b,\xi\rangle=0.
\end{align*}
This means that $f$ can be restricted to a map
\begin{equation*}\widetilde{f}:\mathbf{K}_{H_\mu^T}^{-1}(0)\longrightarrow \J^{-1}_{H^T}(0) %\label{eq:f-restriction}
\end{equation*}
and this is the key condition that will allow us to relate the $H_\mu$-quotient $G\times_{H_\mu} \left(\p^*\times N_0\right)$ with the $H$-quotient $\J_{H^T}^{-1}(0)/H^T\cong T^*(G\times_H S)$.
 To do so consider the diagram
 \[\xymatrix{
  & G\times D \ar[r]^{f} & T^*(G\times S) & \\
 G\times \p^*_\text{dom}\times \o_\text{dom}\times (T^*S)_\text{dom} \ar[r]^(.6){l} \ar[d] & *+<10pt>{\mathbf{K}^{-1}_{H_\mu^T}(0)} \ar[r]^{\widetilde{f}} \ar[d] \ar[dr] \ar@{^{(}->}[u] & *+<10pt>{\J^{-1}_H(0)} \ar[d] \ar@{^{(}->}[u] \ar[dr]^\varphi \\
  G\times_{H_\mu} \left(\p^*_\text{dom}\times \o_\text{dom}\times (T^*S)_\text{dom}\right) \ar[r]^(.6)L \ar@/_22pt/[rrr]^{\mathcal{T}_0} & \mathbf{K}^{-1}_{H_\mu^T}(0)/H_\mu^T \ar[r]^{F} &  \J^{-1}_{H^T}(0)/H^T \ar[r]^{\overline{\varphi}} & T^*(G\times_H S)
 }\]

Composing $\widetilde{f}$ with the projection by $H^T$ in the target we get a smooth map $\mathbf{K}_{H_\mu^T}^{-1}(0)\longrightarrow \J^{-1}_{H^T}(0)/H^T$
which is $G^K$-equivariant and $H_\mu^T$-invariant and so it induces the smooth mapping
\[F: \mathbf{K}_{H_\mu^T}^{-1}(0)/H_\mu^T\longrightarrow \J^{-1}_{H^T}(0)/H^T.\]

If $\mathbf{K}_{H_\mu^T}^{-1}(0)/H_\mu^T$ is endowed with the reduced form $(\omega_Z)_\text{red}$ and $\J^{-1}_{H^T}(0)/H^T$ with $(\omega_{T^*(G\times S)})_\text{red}$ then $F^*(\omega_{T^*(G\times S)})_\text{red}=(\omega_Z)_\text{red}$ because $f^*\omega_{T^*(G\times S)}=\omega_Z$. In particular $F$ is an immersion.
Also, as the $H_\mu^T$-action on $G\times D$ is free
 \begin{align*}
 \dim{\mathbf{K}^{-1}_{H_\mu^T}(0)/H_\mu^T}&=\dim \mathbf{K}^{-1}_{H_\mu^T}(0)- \dim \h_\mu=\dim (G\times (\g_\mu^*\times\o))+2\dim S-2\dim \h_\mu \\
 &=\dim \g +\dim \g_\mu +\dim \o -2\dim \h_\mu +2\dim S\\
 &=2\dim \p +2\dim \o+\dim \l +\dim \n +2\dim S\\
 &=2\dim \p +2\dim \o+2\dim \l +2\dim S\\
 &=2(\dim \g-\dim \h+\dim S).
 \end{align*}
Analogously,
  \begin{align*}
 \dim{\J^{-1}_{H}(0)/H^T}&=\dim \J^{-1}_{H^T}(0)- \dim \h=2(\dim \g-\dim \h+\dim S).
 \end{align*}
This implies that $F$ is a local diffeomorphism because is an immersion between spaces of the same dimension.

By continuity we can choose $H_\mu$-invariant neighborhoods of the origin $\p^*_\text{dom}\subset \p^*$, $\o_\text{dom}\subset \o$ and an $H$-invariant neighborhood of the origin $\h_\text{dom}^*\subset \h^*$ such that $(\nu+\underbrace{\frac{1}{2}\lambda \diamond_{\h_\mu} \ad_\lambda^*\mu+a\diamond_{\h_\mu} b}_{\J_{N_0}},\lambda,a,b)\in D$ for any $\nu\in \p^*_\text{dom}$, $\lambda\in \o_\text{dom}$ and $a,b\in T^*S$ with $a\diamond_\h b\in \h_\text{dom}^*$. The map $$L:G\times_{H_\mu} \left(\p^*_\text{dom}\times \o_\text{dom}\times (T^*S)_\text{dom}\right) \to \mathbf{K}^{-1}_{H_\mu^T}(0)/H^T$$
 given by $[g,\nu,\lambda,a,b]_{H_\mu}\mapsto [g,\nu+\J_{N_0}(\lambda,a,b),\lambda,a,b]_{H_\mu}$ is well defined and, as in \eqref{eq:Lmap}, $L^*(\omega_Z)_\text{red}=\Omega_Y$. The conclusion of this first step is that the composition $\mathcal{T}_0:=\bar{\varphi}\circ F \circ L$ is then a local diffeomorphism that pulls-back the canonical form of $T^*(G\times_H S)$ to the MGS form on the set $G\times_{H_\mu} \left(\p^*_\text{dom}\times \o_\text{dom}\times (T^*S)_\text{dom}\right)$.

\paragraph{2- $\mathcal{T}_0$ is locally injective\\}

As $\mathcal{T}_0:G\times_{H_\mu} \left(\p^*_\text{dom}\times \o_\text{dom}\times (T^*S)_\text{dom}\right)\to T^*(G\times_H S)$
 is a local diffeomorphism, there is a neighborhood of $[e,0,0,0,0]_{H_\mu}$ such that $\mathcal{T}_0$ is injective on it. Using that $\mathcal{T}_0$ is $G$-equivariant
  and that the action is proper this neighborhood can be chosen to be
   $G$-invariant (see for example the proof of Theorem 2.3.28 in \cite{ortegaratiu}). That is, $\mathcal{T}_0$ will be injective when restricted to the set $G\times_{H_\mu}\left(\p^*_\text{inj}\times \o_\text{inj}\times (T^*S)_\text{inj}\right)$ where $\p^*_\text{inj}\subset \p^*_\text{dom}$, $\o_\text{inj}\subset \o_\text{dom}$ are $H_\mu$-invariant neighborhoods and $(T^*S)_\text{inj}$ is an $H$-invariant neighborhood of 0 on $(T^*S)_\text{dom}$. Note that we can not ensure that  $(T^*S)_\text{inj}$ will be big enough to contain all the points of the form $(0,b)\in T^*S$. This issue will be addressed in the next step.

\paragraph{3- $\mathcal{T}_0$ is injective \\}

In this step, we will define an open set $(T^*S)_r\subset (T^*S)_\text{dom}$ such that the restriction $$\mathcal{T}_0:G\times_{H_\mu} \left(\p^*_\text{inj}\times \o_\text{inj}\times (T^*S)_r\right) \longrightarrow \mathcal{T}_0 \left( G\times_{H_\mu} \left(\p^*_\text{inj}\times \o_\text{inj}\times (T^*S)_r\right)\right)$$ is a proper map

The key result that we will use
to prove the properness of $\mathcal{T}_0$ is  the following topological result.
\begin{prop}[Lemma 5 of \cite{MR1958531}]
Let $H$ be a Lie group acting on a symplectic vector space $(W,\omega_W)$ denote by $\J:W\to \h^*$  the associated homogeneous momentum map \eqref{linearmomentum}. Then $\J$ is $H$-open relative to its image. That is, if $U$ is an $H$-invariant open set of $W$ then $\J(U)$ is an $H$-invariant open set of the topological space $\J(W)\subset \h^*$.
\label{prop:openmomentum}
\end{prop}

Let $U_1\subset S$ and $U_2\subset S^*$ be $H$-invariant neighborhoods of the origin such that $\overline{U_1\times U_2}\subset (T^*S)_\text{inj}$.
Using Proposition \ref{prop:openmomentum} there is $\h_r^*$ an open neighborhood of $0\in \h^*$ such that
\[\h_r^*\cap (S\diamond_\h S^*)= U_1\diamond_\h U_2\subset \h^*.\]
In this setting, define $(T^*S)_r:=\{(a,b)\in T^*S\mid a\diamond_\h b\in \h_r^*\}$.
 From the first part of the proof we have the following commutative diagram
\[\xymatrix{
G\times \p^*_\text{dom}\times \o_\text{dom}\times (T^*S)_\text{dom} \ar[d]^{\pi_{H_\mu}} \ar[r]^(.6){\widetilde{f}\circ l}                &\J^{-1}_{H^T}(0) \ar[d]^{\overline{\varphi}\circ \pi_H} \\
G\times_{H_\mu} \left(\p^*_\text{dom}\times \o_\text{dom}\times (T^*S)_\text{dom}\right) \ar[r]^(.6){\mathcal{T}_0}    &T^*(G\times_H S) \\
}\]
The problem is that $\widetilde{f}\circ l$ is an injective embedding but it is not clear if it is proper.
We will now show that $\mathcal{T}_0\circ \pi_{H_\mu}$ is a proper map onto its image when restricted to $G\times \p^*_\text{inj}\times \o_\text{inj}\times (T^*S)_r$. To do so let $x_n=(g_n,\nu_n,\lambda_n;a_n,b_n)$ be a sequence in $G\times \p^*_\text{inj}\times \o_\text{inj}\times (T^*S)_r$ such that
\[\mathcal{T}_0(\pi_{H_\mu}(x_n)) \longrightarrow \mathcal{T}_0(\pi_{H_\mu}(\bar{g},\bar{\nu},\bar{\lambda};\bar{a},\bar{b}))\]
with $(\bar{g},\bar{\nu},\bar{\lambda};\bar{a},\bar{b})\in G\times \p^*_\text{inj}\times \o_\text{inj}\times (T^*S)_r$. We will construct a subsequence $\{x_{\sigma_3(n)}\}\subset \{x_n\}$ which is convergent on $G\times \p^*_\text{inj}\times \o_\text{inj}\times (T^*S)_r$.

The map $\overline{\varphi}\circ \pi_H: \J^{-1}_{H^T}(0) \to T^*(G\times_H S)$ is proper because it is a composition of an homeomorphism and the projection by a compact group. Since $\mathcal{T}_0\circ\pi_{H_\mu}=\overline{\varphi}\circ \pi_{H}\circ \widetilde{f}\circ l$ then there is a increasing map $\sigma_1:\N \to \N$ such that the sequence $\{(\widetilde{f}\circ l)(x_{\sigma_1(n)})\}_n$ converges in $\J^{-1}_{H^T}(0)\subset T^*(G\times S)$ (we are just taking a subsequence). But then by uniqueness of the limit there is $h\in H$ such that
\[(\widetilde{f}\circ l)(x_{\sigma_1(n)}) \longrightarrow h\cdot^T ((\widetilde{f}\circ l)(\bar{g},\bar{\nu},\bar{\lambda};\bar{a},\bar{b})),\]
but using the expression of $f$ \eqref{eq:f-function} this implies that $a_{\sigma_1(n)}\to h\cdot \bar{a}$ and $b_{\sigma_1(n)}\to h\cdot \bar{b}$.
By the definition of $(T^*S)_r$
we can choose for each $n$ a pair $(\alpha_n,\beta_n)\in U_1\times U_2$ satisfying \[\alpha_n\diamond_\h \beta_n=a_n\diamond_\h b_n.\]
Since $U_1\times U_2$ is a relatively compact subset of $(T^*S)_\text{inj}$ we can find an increasing map $\sigma_2:\N \to \N$ such that $\sigma_2(\N)\subset \sigma_1(\N)$ and $(\alpha_{\sigma_2(n)},\beta_{\sigma_2(n)}) \to (\alpha_\infty,\beta_\infty)$ but then $(\widetilde{f}\circ l)(g_{\sigma_2(n)},\nu_{\sigma_2(n)},\lambda_{\sigma_2(n)},\alpha_{\sigma_2(n)},\beta_{\sigma_2(n)})$ converges and
\[(\widetilde{f}\circ l)(g_{\sigma_2(n)},\nu_{\sigma_2(n)},\lambda_{\sigma_2(n)},\alpha_{\sigma_2(n)},\beta_{\sigma_2(n)})
\longrightarrow
h\cdot ^T\left((\widetilde{f}\circ l)(\bar{g},\bar{\nu},\bar{\lambda},h^{-1}\alpha_\infty,h^{-1}\beta_\infty)\right).\]

As $(g_{\sigma_2(n)},\nu_{\sigma_2(n)},\lambda_{\sigma_2(n)},\alpha_{\sigma_2(n)},\beta_{\sigma_2(n)})$ lies in $G\times \p^*_\text{inj}\times \o_\text{inj}\times (T^*S)_\text{inj}$ by the facts that $\mathcal{T}_0$ restricted to $G\times \p^*_\text{inj}\times \o_\text{inj}\times (T^*S)_\text{inj}$ is a diffeomorphism and $\pi_{H_\mu}$ is proper then there is an increasing map $\sigma_3:\N\to \N$ with $\sigma_3(\N)\subset \sigma_2(\N)$
such that $(g_{\sigma_3(n)},\nu_{\sigma_3(n)}, \lambda_{\sigma_3(n)}, \alpha_{\sigma_3(n)}, \beta_{\sigma_3(n)})$ converges in $G\times\p^*_\text{inj}\times \o_\text{inj}\times (T^*S)_\text{inj}$. Therefore, $\{x_{\sigma_3(n)}\}$ is a convergent sequence on $G\times \p^*_\text{inj}\times \o_\text{inj}\times (T^*S)_r$.

This proves that $\mathcal{T}_0\circ \pi_{H_\mu}:G\times \p^*_\text{inj}\times \o_\text{inj}\times (T^*S)_r\to (\mathcal{T}_0\circ \pi_{H_\mu})(G\times \p^*_\text{inj}\times \o_\text{inj}\times (T^*S)_r)$ is proper. But since $\pi_{H_\mu}:G\times \p^*_\text{inj}\times \o_\text{inj}\times (T^*S)_r\to G\times_{H_\mu} \left(\p^*_\text{inj}\times \o_\text{inj}\times (T^*S)_r\right)$ is surjective and continuous this implies that $\mathcal{T}_0:G\times_{H_\mu} \left(\p^*_\text{inj}\times \o_\text{inj}\times (T^*S)_r\right)\longrightarrow \mathcal{T}_0\big( G\times_{H_\mu} \left(\p^*_\text{inj}\times \o_\text{inj}\times (T^*S)_r\right) \big)$ is a proper map.

As $\mathcal{T}_0$ is a local homeomorphism which is also proper it is a covering map. Then if $\varphi(e,\mu,0,0)$ has only one preimage, this implies that the covering map is in fact everywhere injective and therefore a global diffeomorphism. But if $\mathcal{T}_0([g,\nu,\lambda,a,b]_{H_\mu})=\varphi(e,\mu,0,0)$ then it is clear from the expression of $\mathcal{T}_0$ \eqref{eq:alpha0tube}
 that $a=b=0$ and then $[g,\nu,\lambda,0,0]_{H_\mu}\in G\times_{H_\mu} \left(\p^*_\text{inj}\times \o_\text{inj}\times (T^*S)_\text{inj}\right)\subset G\times_{H_\mu}  \left(\p^*_\text{inj}\times \o_\text{inj}\times (T^*S)_r\right)$. Therefore, by injectivity  we have $[g,\nu,\lambda,0,0]_{H_\mu}=[e,0,0,0,0]_{H_\mu}$. To sum up, the restricted map
\begin{align*}\mathcal{T}_0:G\times_{H_\mu}  \left(\p^*_\text{inj}\times \o_\text{inj}\times (T^*S)_r\right) &\longrightarrow \mathcal{T}_0(G\times_{H_\mu}  \left(\p^*_\text{inj}\times \o_\text{inj}\times (T^*S)_r\right))\subset T^*(G\times_H S) \\
[g,\nu,\lambda;a,b]_{H_\mu} &\longmapsto \varphi(\Phi(g,\tilde \nu,\lambda;a\diamond_\l b);a,b)
\end{align*}
where $\tilde \nu=\nu+\J_{N_0}(\lambda,a,b)=\nu+\frac{1}{2}\lambda \diamond_{\h_\mu} \ad_\lambda^*\mu+a\diamond_{\h_\mu} b$,
 is a bijection.

\end{proof}

\subsection{The $\Gamma$ map}

In this section we are going to introduce the $\Gamma$ map, a technical tool used in \cite{MR2293645} to build the Hamiltonian cotangent tube when $G=G_\mu$. Here we will use it as the final step towards generalizing the previous tube at $\alpha=0$ to the general case $\alpha\neq 0$.

Let $\varphi(e,\mu,0,\alpha)\in T^*(G\times_H S)$ and define $K=H_\mu \cap H_\alpha$. Recall that in \eqref{eq:Talpha} we defined $B:=(\h_\mu\cdot \alpha)^\circ\subset S$ and a $K$-invariant complement $\s$ of $\k$ in $\h_\mu$. As $K$ is compact we can choose a $K$-invariant splitting $S=B\oplus C$ inducing  the $K$-invariant splitting $S^*=B^*\oplus C^*$.

However the previous splitting of $S^*$ is not in principle  $H_\mu$-invariant. The following technical result studies how it behaves with respect to the  infinitesimal $H_\mu$-action on $S$. The next result is a straightforward generalization to the case $\g_\mu \neq \g$ of Lemmas 27 and 28 of \cite{MR2293645}.

\begin{prop}
In the above situation:
\begin{itemize}
 \item If $a\in B$, $c\in C$ and $b\in B^*$ then\begin{equation*}
  (a+c)\diamond_{\h_\mu} (\alpha+b)=a \diamond_{\h_\mu} b + c\diamond_\s (\alpha+b).
 \end{equation*}

 \item There is a $K$-invariant neighborhood $(B^*)_r$ of the origin in $B^*$ and a $K$-equivariant map \[\Gamma:\s^*\times (B^*)_r\longrightarrow S\] defined by
\begin{equation*}
 \lpair \Gamma(\nu;b),\xi\cdot(\alpha+b)+\beta\rpair=\lpair \nu,\xi\rpair \quad \forall \beta\in B^*,\ \forall \xi\in \s. %\label{eq:gammadef}
\end{equation*}
That is, $\Gamma$ satisfies $\Gamma(\nu;b)\diamond_\s(b+\alpha) = \nu$ and $\Gamma(\nu;b)\in C$ for any $\nu\in \s^*$ and $b\in (B^*)_r$.
\end{itemize}
\label{prop:Gamma}
\end{prop}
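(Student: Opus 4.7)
The plan is to treat both parts as linear algebra exercises on the data $(S,\alpha,\h_\mu=\k\oplus\s,S=B\oplus C)$, leaning on the defining identity $\langle\alpha,\xi\cdot a\rangle=0$ for all $a\in B$ and $\xi\in\h_\mu$ (i.e.\ the definition of $B=(\h_\mu\cdot\alpha)^\circ$), together with the $K$-invariance of the splittings $\h_\mu=\k\oplus\s$ and $S=B\oplus C$.

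For the first identity I would view both sides as elements of $\h_\mu^*$, with the right-hand summand $c\diamond_\s(\alpha+b)$ interpreted as an element of $\h_\mu^*$ by extending by zero on $\k$ using $\h_\mu=\k\oplus\s$. Then I would pair with an arbitrary $\xi\in\h_\mu$ and split into the cases $\xi\in\k$ and $\xi\in\s$. For $\xi\in\k$ one has $\xi\cdot\alpha=0$ (because $\k\subset\h_\alpha$) and $\xi\cdot c\in C$ (by $K$-invariance of $C$); since $b\in B^*$ vanishes on $C$, the only surviving term on each side is $\langle b,\xi\cdot a\rangle$. For $\xi\in\s$ the defining property of $B$ kills the cross term $\langle\alpha,\xi\cdot a\rangle$, and the remaining three terms of the left-hand side rearrange as $\langle b,\xi\cdot a\rangle+\langle\alpha+b,\xi\cdot c\rangle$, which is exactly what the right-hand side evaluates to.

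For the second part I would observe that the condition $\langle\Gamma(\nu;b),\xi\cdot(\alpha+b)+\beta\rangle=\langle\nu,\xi\rangle$ must hold for all $\beta\in B^*$, which forces $\langle\Gamma(\nu;b),\beta\rangle=0$ for every $\beta\in B^*$ and hence $\Gamma(\nu;b)\in C$ (the annihilator of $B^*\subset S^*$ inside $S$). Setting $\beta=0$ then reduces the condition to $\Gamma(\nu;b)\diamond_\s(\alpha+b)=\nu$, so the existence of $\Gamma$ is equivalent to inverting, for $b$ small, the linear map $\Psi_{\alpha+b}:C\to\s^*$ defined by $c\mapsto c\diamond_\s(\alpha+b)$.

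To run this inversion at $b=0$ I would first count dimensions: $\dim C=\dim S-\dim B=\dim(\h_\mu\cdot\alpha)=\dim\h_\mu-\dim(\h_\mu)_\alpha=\dim\s$, so only injectivity of $\Psi_\alpha$ needs to be verified. If $c\in C$ satisfies $c\diamond_\s\alpha=0$, then $\langle\alpha,\xi\cdot c\rangle=0$ for $\xi\in\s$, and this equation already holds for $\xi\in\k\subset\h_\alpha$; hence $c\in(\h_\mu\cdot\alpha)^\circ=B$, and since $B\cap C=0$ this forces $c=0$. Continuity of the determinant then extends the isomorphism to a neighborhood of $0\in B^*$, which I would average under the compact $K$-action to obtain a $K$-invariant $(B^*)_r$, and define $\Gamma(\nu;b):=\Psi_{\alpha+b}^{-1}(\nu)$, smooth in both arguments. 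The $K$-equivariance of the diamond product combined with $k\cdot\alpha=\alpha$ yields $\Psi_{\alpha+k\cdot b}(k\cdot c)=k\cdot\Psi_{\alpha+b}(c)$, which immediately transfers to $K$-equivariance of $\Gamma$. The only real care is in the bookkeeping of the extension and restriction conventions in part one; no serious technical obstacle is expected.
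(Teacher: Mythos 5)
Your proof is correct and is essentially the argument the paper has in mind: the paper gives no proof of Proposition \ref{prop:Gamma}, deferring to Lemmas 27 and 28 of \cite{MR2293645}, and your two steps --- pairing the diamond identity separately with $\k$ and with $\s$ (using $\k\subset\h_\alpha$, the invariance of $C$, and the defining property of $B$), and then inverting $c\mapsto c\diamond_\s(\alpha+b)$, which is bijective at $b=0$ by the count $\dim C=\dim\h_\mu-\dim\k=\dim\s$ together with $\Ker\Psi_\alpha\subset B\cap C=0$ --- are exactly the ``straightforward generalization'' alluded to there. Two cosmetic remarks: the set of $b$ for which $c\mapsto c\diamond_\s(\alpha+b)$ is invertible is already $K$-invariant (since $K$ fixes $\alpha$ and the diamond product is equivariant), so no averaging or extra shrinking is needed; and identifying the defining pairing condition with $\Gamma(\nu;b)\diamond_\s(b+\alpha)=\nu$ is a sign-convention matter that the statement itself already asserts, so no gap arises from it.
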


With the notation that we have already introduced the symplectic slice at $\varphi(e,\mu,0,0)$ is $N_0=\o\times T^*S$ whereas the symplectic slice at $\varphi(e,\mu,0,\alpha)$ is $N_\alpha=\o \times T^*B$ (see Theorem \ref{thm:symplnorm-cotang}). The abstract MGS models at $\varphi(e,\mu,0,0)$ and $\varphi(e,\mu,0,\alpha)$ are $G\times_{H_\mu}(\p^*\times N_0)$ and $G\times_{K}(\s^*\oplus \p^*\times N_\alpha)$ respectively. The next result shows that $\Gamma$ can be used to build a well-behaved map between both spaces.

\begin{thm}
In the above context there is an open $K$-invariant neighborhood  $W$ of zero in $(\s^*\oplus \p^*)\times \o \times B \times B^*$ such that the $G$-equivariant map
\begin{align*}
\mathcal{F}: G\times_{K} W &\longrightarrow G\times_{H_\mu}(\p^*\times \o \times S \times S^*) \\
[g,\nu_\s+\nu_\p,\lambda,a,b]_{K} &\longmapsto [g,\nu_\p,\lambda,\widetilde{a},b+\alpha]_{H_\mu},
\end{align*}
where $\widetilde{a}=a+\Gamma(\nu_\s-a\diamond_\s b-\frac{1}{2}\lambda\diamond_\s\ad^*_\lambda \mu;b)$, is a local symplectomorphism.
\label{thm:shift}
\end{thm}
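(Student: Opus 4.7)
The plan is to work at the level of the pre-quotient map
\[
\widetilde{\mathcal{F}}: G\times W \longrightarrow G\times \p^*\times \o\times S\times S^*,\qquad (g,\nu_\s+\nu_\p,\lambda,a,b)\longmapsto (g,\nu_\p,\lambda,\widetilde a, b+\alpha),
\]
and split the argument into three parts: (i) verify the $K$-equivariance of $\widetilde{\mathcal{F}}$ so that it descends to $\mathcal{F}$, (ii) show that $\mathcal{F}$ pulls the MGS two-form on $G\times_{H_\mu}(\p^*\times N_0)$ back to the MGS two-form on $G\times_K((\s^*\oplus\p^*)\times N_\alpha)$, and (iii) conclude that $\mathcal{F}$ is a local diffeomorphism by combining non-degeneracy of the two MGS forms with a dimension count. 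Once (ii) is established, non-degeneracy automatically forces $T\mathcal{F}$ to be injective, and since
\[
\dim\bigl(G\times_{H_\mu}(\p^*\times N_0)\bigr)-\dim\bigl(G\times_K((\s^*\oplus\p^*)\times N_\alpha)\bigr) = 2\dim C-2\dim\s = 0
\]
(using $\dim C=\dim S-\dim B=\dim(\h_\mu\cdot\alpha)=\dim\s$ because $\k=\h_\mu\cap\h_\alpha$ is exactly the kernel of the infinitesimal action of $\h_\mu$ on $\alpha$), $T\mathcal{F}$ is an isomorphism at every point of $W$.

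For (i), $K$-equivariance reduces, after the obvious twisting-action book-keeping on each factor, to the $K$-equivariance of the shift by $\alpha$ (automatic because $\alpha$ is $K$-fixed) and of $\Gamma$ (the content of Proposition \ref{prop:Gamma}); combined with tautological $G^L$-equivariance this gives a well-defined smooth map $\mathcal{F}$ between twisted products. The key identity that drives part (ii) is the algebraic consequence of the defining property of $\Gamma$ stated in Proposition \ref{prop:Gamma}. Using that $a\in B=(\h_\mu\cdot\alpha)^\circ$ forces $a\diamond_{\s}\alpha=0$ together with $\Gamma(\nu;b)\diamond_\s(\alpha+b)=\nu$, one checks
\[
\widetilde a\diamond_\s(\alpha+b)\,+\,\tfrac{1}{2}\lambda\diamond_\s\ad^*_\lambda\mu \;=\; \nu_\s,
\]
so that the $\h_\mu$-valued momentum $\J_{N_0}(\lambda,\widetilde a,b+\alpha)$ decomposes as $\nu_\s\in\s^*$ on the $\s$-part plus the $N_\alpha$-momentum on the $\k$-part. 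Concretely, $\J_{N_0}(\lambda,\widetilde a,b+\alpha)\rrestr{\k}=\J_{N_\alpha}(\lambda,a,b)$ and the corresponding piece picks up $\nu_\s$ on the $\s^*$ side.

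For (ii), I would substitute the formula for $\widetilde{\mathcal{F}}$ into the expression \eqref{eq:MGSsymp} for the target MGS form, then expand using $d\widetilde a=da+d\Gamma$. The terms decompose naturally into four pieces: the $G\times\p^*$ piece reproduces the corresponding block of the domain MGS form verbatim; the $-\lpair\mu,[\dot\lambda_1,\dot\lambda_2]\rpair$ contribution from $\o$ matches; the $T^*S$-part of $\Omega_{N_0}$ evaluated on $(\dot{\widetilde a}_i,\dot b_i)$ produces the $T^*B$ canonical part of $\Omega_{N_\alpha}$ plus a remainder involving $\Gamma$; finally, the shift of $\mu+\nu$ by $\J_{N_0}$ inside \eqref{eq:MGSsymp} produces an $\s^*$-bracket term that, after using the key identity above, cancels the remainder produced by $d\Gamma$. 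This cancellation is exactly what converts the $H_\mu$-momentum shift on the target side into the $K$-momentum shift plus the $\nu_\s$-coupling on the source side.

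The main obstacle is this bookkeeping of terms in the symplectic pullback computation: one has to handle derivatives of $\Gamma$ carefully, use the bilinear identity derived from Proposition \ref{prop:Gamma} (and its linearization in the direction of $b$), and exploit that $\s\subset\h_\mu$ with $\h_\mu=\k\oplus\s$ so that the $\h_\mu$-bracket terms split cleanly. Once the identity $\widetilde a\diamond_\s(\alpha+b)+\tfrac12\lambda\diamond_\s\ad^*_\lambda\mu=\nu_\s$ is in hand, the remaining work is a direct though somewhat lengthy comparison of the two MGS expressions, after which (iii) is immediate, and shrinking $W$ if necessary (to ensure the image lands in the domain of $\mathcal{T}_0$) concludes the proof.
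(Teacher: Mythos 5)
Your proposal is correct, but it takes a genuinely different route from the paper. You verify the pullback identity directly on the two MGS models, expanding \eqref{eq:MGSsymp} along the explicit formula for $\mathcal{F}$; the paper instead realizes both models as Marsden--Weinstein quotients (Section \ref{sec:reduced-MGS}), lifts the map upstairs to $f:Z_\alpha\to Z_0$, $(g,\nu,\lambda,a,b)\mapsto(g,\nu,\lambda,a+\Gamma(\eta;b),b+\alpha)$, where checking that $f^*\theta_{Z_0}$ is a potential for $\omega_{Z_\alpha}$ is a one-line computation, shows via the momentum computation \eqref{eq:diamondGamma} that $f$ maps $\mathbf{M}_{K^T}^{-1}(0)$ into $\mathbf{K}_{H_\mu^T}^{-1}(0)$, descends to the quotients, finishes exactly as you do (immersion plus dimension count), and finally composes with the symplectomorphisms $L_\alpha$ and $R_0$ of type \eqref{eq:Lmap} to land on the stated formula. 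The two key facts are the same in both treatments: your identity $\widetilde a\diamond_\s(\alpha+b)+\tfrac12\lambda\diamond_\s\ad^*_\lambda\mu=\nu_\s$, equivalently $\J_{N_0}(\lambda,\widetilde a,b+\alpha)=\nu_\s+\J_{N_\alpha}(\lambda,a,b)$ in $\h_\mu^*=\s^*\oplus\k^*$ (this is \eqref{eq:diamondGamma}), and the vanishing of pairings of $B^*$ against values and derivatives of $\Gamma$, since $\Gamma$ takes values in $C$ and $B^*$ annihilates $C$ under $S^*=B^*\oplus C^*$. In fact the bookkeeping is simpler than you anticipate: no cross-cancellation between the $d\Gamma$ remainder and the term $\lpair \nu+\J_{N_0}+\mu,[\xi_1,\xi_2]\rpair$ is needed, because the $d\Gamma$ contribution to the $T^*S$ block dies outright against $\dot b\in B^*$, while the momentum identity, holding as an identity of functions on the whole domain, handles every term containing $\J_{N_0}$ or $T\J_{N_0}$ by the chain rule. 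Two points you should make explicit: nondegeneracy of the source MGS form, which you need to turn the pullback identity into injectivity of $T\mathcal{F}$, only holds where $\nu_\s+\nu_\p+\J_{N_\alpha}(\lambda,a,b)$ stays in $(\g_\mu^*)_r$, so $W$ must be shrunk for this reason (not merely so the image lies in the domain of $\mathcal{T}_0$, which is really a concern of Theorem \ref{thm:tubegeneral}); your dimension count $\dim C=\dim\s$ is right, as $\k$ is the kernel of $\xi\in\h_\mu\mapsto\xi\cdot\alpha$. What each approach buys: the paper's reduction route reuses the machinery already set up for Theorem \ref{thm:tube0} and avoids differentiating $\Gamma$ altogether; yours is more elementary and self-contained, at the price of a longer, though ultimately tame, direct verification.
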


\begin{proof}
As in the first part of the proof of Theorem \ref{thm:tube0} there is a neighborhood $(\g_\mu^*)_r$ of $0\in \g^*$
 such that
 $Z_0:=G\times (\g_\mu^*)_r\times (\o\times S\times S^*)$ is a symplectic space
 with $\omega_{Z_0}:=\omega_{T_\mu}+\Omega_{N_0}$. We are in the same setting as in Subsection \ref{sec:reduced-MGS}, therefore, $Z_0$ supports $G^L$ and $H_\mu^T$ Hamiltonian actions with momentum maps that were denoted as $\mathbf{K}_{G^L}$ and $\mathbf{K}_{H_\mu^T}$.

  Similarly $Z_\alpha:=G\times (\g_\mu^*)_r\times (\o\times B\times (B^*)_r)$ is a symplectic space with symplectic form $\omega_{Z_\alpha}:=\omega_{T_\mu}+\Omega_{N_\alpha}$ because $N_\alpha=\o \times B\times B^*$. Note that $Z_\alpha$ has $G^L$ and $K^T$ Hamiltonian actions with momentum maps $\mathbf{M}_{G_L}$ and $\mathbf{M}_{K^T}$. Consider now the map
\begin{align*}
f:Z_\alpha & \longrightarrow Z_0 \\
(g,\nu,\lambda;a,b) &\longmapsto (g,\nu,\lambda;a+\Gamma(\eta;b),b+\alpha)
\end{align*}
where $\eta=\nu \rrestr{\s}-a\diamond_\s b-\frac{1}{2}\lambda \diamond_\s \ad^*_\lambda \mu$. As $\Gamma$ is $K$-equivariant then  $f$ is $G^L\times K^T$ equivariant. Note that the potential $\theta_{Z_0}(g,\nu,\lambda;a,b)(\xi,\dot \nu,\dot \lambda;\dot a,\dot b) =\lpair \nu+\mu,\xi \rpair+\frac{1}{2}\lpair \mu,[\lambda,\dot\lambda]\rpair+\lpair b, \dot a\rpair-\lpair \alpha,\dot a\rpair$ generates the symplectic structure $\omega_{Z_0}$ (see \eqref{eq:Tmu-potential}) and
\begin{align*}
(f^*\theta_{Z_0})(g,\nu,\lambda,a,b)\cdot v &=\theta_{Z_0}(f(g,\nu,\lambda,a,b))(T_{(g,\nu,\lambda,a,b)}f\cdot v) = \\
&= \lpair \mu + \nu,\xi\rpair +\frac{1}{2}\lpair \mu,[\lambda,\dot\lambda]\rpair +\lpair b,\dot a +T_{(\nu,\lambda,a,b)}\Gamma\cdot(\dot\nu,\dot \lambda,\dot a,\dot b)\rpair=\\
&= \lpair \mu + \nu,\xi\rpair +\frac{1}{2}\lpair \mu,[\lambda,\dot\lambda]\rpair +\lpair b,\dot a\rpair
\end{align*}
where $v=(\xi,\dot \nu,\dot a,\dot b)\in T_{(g,\nu,a,b)} Z_\alpha$. Taking the exterior derivative of this equality we get $f^*\omega_{Z_0}=\omega_{Z_\alpha}$.

Additionally,
the $H_\mu^T$-momentum evalueated at $f(g,\nu,\lambda,a,b)$ is
\begin{align}
\mathbf{K}_{H_\mu^T}(f(g,\nu,\lambda;a,b)) & = - \nu + \frac{1}{2}\lambda \diamond_{\h_\mu} \ad_\lambda^*\mu +(a+\Gamma(\eta;b))\diamond_{\h_\mu} (b+\alpha) = \label{eq:diamondGamma}\\
& = - \nu\rrestr{\h_\mu} + \frac{1}{2}\lambda \diamond_{\h_\mu} \ad_\lambda^*\mu + a\diamond_{\h_\mu} b + \Gamma(\eta;b)\diamond_{\s} (b+\alpha)= \nonumber\\
& = - \nu\rrestr{\h_\mu} + \frac{1}{2}\lambda \diamond_{\h_\mu} \ad_\lambda^*\mu + a\diamond_{\h_\mu} b + \eta= \nonumber\\
& =  - \nu\rrestr{\h_\mu} + \frac{1}{2}\lambda \diamond_{\h_\mu} \ad_\lambda^*\mu + a\diamond_{\h_\mu} b + \nu \rrestr{\s}-a\diamond_\s b-\frac{1}{2}\lambda \diamond_\s \ad^*_\lambda \mu \nonumber\\
&=  - \nu\rrestr{\k} +\frac{1}{2}\ad_\lambda^*\mu \diamond_{\k} \lambda + a\diamond_{\k} b. \nonumber
\end{align}
This means that if $\xi\in \s$ then $\lpair \mathbf{K}_{H_\mu^T}(f(g,\nu,\lambda,a,b)),\xi\rpair=0$ and as $$\mathbf{M}_{K^T}(g,\nu,\lambda,a,b)= -\nu \rrestr{\k}+\frac{1}{2}\ad_\lambda^*\mu \diamond_{\k} \lambda + a\diamond_{\k} b,$$ $f$ can be restricted to
\[\widetilde{f}: \mathbf{M}^{-1}_{K^T}(0) \longrightarrow \mathbf{K}^{-1}_{H_\mu^T}(0).\]

As we did in Theorem \ref{thm:tube0} we can construct from the top to the bottom all the arrows of the diagram
 \[\xymatrix{
  Z_\alpha \ar[r]^(.6){f} & Z_0 & \\
 *+<10pt>{\mathbf{M}^{-1}_{K^T}(0)} \ar[r]^{\widetilde{f}} \ar[d] \ar[dr] \ar@{^{(}->}[u] & *+<10pt>{\mathbf{K}^{-1}_{H_\mu}(0)} \ar[d] \ar@{^{(}->}[u] \\
 \mathbf{M}^{-1}_{K^T}(0)/K^T \ar[r]^{F} &  \mathbf{K}^{-1}_{H_\mu}(0)/{H_\mu^T}
 }\]
using the same arguments as in the first part of Theorem \ref{thm:tube0}. Therefore, as $F$ is an immersion between spaces of the same dimension, it is a local diffeomorphism onto its image.

Adapting the construction of map \eqref{eq:Lmap}  to this setting define the $K$-invariant open set \[W:=\{(\nu_\s+\nu_\p,\lambda,a,b)\in (\s^*\oplus \s^*)\times \o\times B\times (B^*)_r \mid  \nu_\s+\nu_\p+\underbrace{\frac{1}{2}\ad_\lambda^*\mu \diamond_{\k} \lambda + a\diamond_{\k} b}_{\J_{N_\alpha}}\in (\g_\mu^*)_r \}\]
and the map $L_\alpha: G\times_K W \longrightarrow \mathbf{M}^{-1}_{K^T}(0)/K^T$ given by $$L_\alpha([g,\nu_\s+\nu_\p,\lambda,a,b]_{K}) = [g,\nu_\s+\nu_\p+\J_{N_\alpha}(\lambda,a,b),\lambda,a,b]_{K},$$ then $L_\alpha$ is a $G$-equivariant symplectomorphism. And similarly,
$$R_0:\mathbf{K}^{-1}_{H_\mu^T}(0)/H_\mu^T \to G\times_{H_\mu}(\p^*\times \o \times S \times S^*)$$ defined by $[g,\nu,\lambda,a,b]_{H_\mu} \mapsto [g,\nu\rrestr{\p},\lambda,a,b]_{H_\mu}$
is a $G$-equivariant symplectomorphism.

Finally, we can conclude that the composition $\mathcal{F}=R_0\circ F\circ L_\alpha: G\times_K W \to G\times_{H_\mu}(\p^*\times \o \times S \times S^*)$ is a $G$-equivariant local diffeomorphism that pulls-back the MGS symplectic form of the target fo the MGS symplectic form of $G\times_K W$.

\end{proof}

\subsection{General tube}
\label{sec:general-tube}

In this section we will deal with the most general situation and  will construct a Hamiltonian tube around an arbitrary point $\varphi(e,\mu,0,\alpha)$. To do so we will use Theorem \ref{thm:tube0} to obtain a Hamiltonian tube around $\varphi(e,\mu,0,0)$ and then we will compose it with the map $\mathcal{F}$ of Theorem \ref{thm:shift}.
The result of this composition will be the desired Hamiltonian tube around $\varphi(e,\mu,0,\alpha)$.

\begin{thm}
Consider the point $z\in T^*(G\times_H S)$ defined by $z=\varphi(e,\mu,0,\alpha)$. Let $\g=\g_\mu\oplus \o\oplus\l\oplus \n$ be an adapted splitting in the sense of Proposition \ref{prop:liealgsplitting} and let $\Phi:G\times U_\Phi \longrightarrow T^*G$ be an associated restricted $G$-tube. Let $\g_\mu=\h_\mu\oplus \p$ be a $H_\mu$-invariant splitting and  $\h_\mu=\g_z\oplus \s$ a $G_z$ invariant splitting. Define $B=(\h_\mu\cdot \alpha)^\circ \subset S$ and let the map $\Gamma:\s^*\times B^*_r\to S$ be the one defined in Proposition \ref{prop:Gamma}.

In these conditions there are small enough ${G_z}$-invariant neighborhoods of zero $ \s^*_r\subset \s^*$, $ \p^*_r\subset \p^*$, $ \o_r\subset \o$, $B_r\subset B$ and $B_r^*\subset B^*$ such that the map
\begin{align}
\mathcal{T}:G\times_{G_z} \left((\s^*_r\oplus \p^*_r)\times \o_r \times B_r\times B^*_r\right) &\longrightarrow T^*(G\times_H S) \label{eq:gentube}\\
[g,\nu_\s+\nu_\p,\lambda;a,b]_{{G_z}} &\longmapsto \varphi(\Phi(g,\tilde \nu,\lambda;\eps);\tilde a,b+\alpha) \nonumber
\end{align}
where
\begin{align*}
\tilde a &=a+\Gamma(\nu_\s-a\diamond_\s b-\frac{1}{2}\lambda \diamond_\s \ad^*_\lambda\mu;b) \\
\tilde \nu &=\nu_\p+\nu_\s+\frac{1}{2}\lambda\diamond_{{\g_z}}\ad_\lambda^* \mu+a \diamond_{{\g_z}} b  \\
\eps &= \tilde a \diamond_\l (b+\alpha)
\end{align*}
is a Hamiltonian tube around the point $z=\varphi(e,\mu,0,\alpha)$.
\label{thm:tubegeneral}
\end{thm}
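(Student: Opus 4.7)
The plan is to construct $\mathcal{T}$ as the composition $\mathcal{T} = \mathcal{T}_0 \circ \mathcal{F}$, where $\mathcal{T}_0$ is the Hamiltonian tube at $\varphi(e,\mu,0,0)$ produced by Theorem~\ref{thm:tube0} and $\mathcal{F}$ is the $G$-equivariant local symplectomorphism from Theorem~\ref{thm:shift}. The first observation is that $G_z = G_\mu \cap H_\alpha = H_\mu \cap H_\alpha$ coincides with the subgroup $K$ of Theorem~\ref{thm:shift}, so the isotropy groups used on the two sides match. By choosing $\s^*_r, \p^*_r, \o_r, B_r, B^*_r$ small enough, $\mathcal{F}$ will send the proposed domain $G\times_{G_z}\left((\s^*_r \oplus \p^*_r) \times \o_r \times B_r \times B^*_r\right)$ into the domain of $\mathcal{T}_0$; the composition is then a $G$-equivariant map that pulls back $\omega_{T^*(G\times_H S)}$ to the MGS form, and it remains to identify the explicit formula and verify the tube properties at the center point.

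The central computation is to evaluate $\mathcal{T}_0 \circ \mathcal{F}$ in coordinates. Applying $\mathcal{F}$ to $[g,\nu_\s + \nu_\p,\lambda,a,b]_{G_z}$ gives $[g,\nu_\p,\lambda,\tilde a, b+\alpha]_{H_\mu}$ with $\tilde a$ as in the statement, and feeding this into $\mathcal{T}_0$ yields $\varphi(\Phi(g,\tilde\nu^0,\lambda;\tilde a \diamond_\l(b+\alpha));\tilde a, b+\alpha)$ with
\[
\tilde\nu^0 = \nu_\p + \tfrac{1}{2}\lambda\diamond_{\h_\mu}\ad^*_\lambda\mu + \tilde a \diamond_{\h_\mu}(b+\alpha).
\]
The only nontrivial algebraic step is to identify $\tilde\nu^0$ with the $\tilde\nu$ of the statement, and for this I would invoke identity~\eqref{eq:diamondGamma} from the proof of Theorem~\ref{thm:shift}, which rewrites $\tilde a \diamond_{\h_\mu}(b+\alpha)$ as $a\diamond_{\h_\mu} b + \nu_\s - a\diamond_\s b - \tfrac{1}{2}\lambda\diamond_\s\ad^*_\lambda\mu$. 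Substituting and splitting each $\h_\mu$-diamond along the $G_z$-invariant decomposition $\h_\mu = \g_z \oplus \s$, the $\s$-parts cancel and one recovers exactly $\tilde\nu = \nu_\p + \nu_\s + \tfrac{1}{2}\lambda\diamond_{\g_z}\ad^*_\lambda\mu + a\diamond_{\g_z} b$, while the third argument of $\Phi$ is $\eps$ by definition. This matches formula~\eqref{eq:gentube}.

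Finally I would check the conditions at the center point and upgrade the local symplectomorphism to a global Hamiltonian tube. Since $\g_z = \h_\mu \cap \h_\alpha$, the subspace $\s$ acts faithfully on $\alpha$, so the map $c\mapsto c\diamond_\s\alpha$ has kernel $B$ and restricts to a $K$-equivariant isomorphism $C\to \s^*$. Together with $\Gamma(0;0)\in C$ and $\Gamma(0;0)\diamond_\s\alpha = 0$ from Proposition~\ref{prop:Gamma}, this forces $\Gamma(0;0) = 0$, whence $\tilde a, \tilde\nu, \eps$ all vanish at the origin and $\mathcal{T}([e,0,0,0,0]_{G_z}) = \varphi(\Phi(e,0,0;0);0,\alpha) = \varphi(e,\mu,0,\alpha) = z$. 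The inverse function theorem then makes $\mathcal{T}$ a local diffeomorphism near the origin, and invoking the properness argument of Step~2 in the proof of Theorem~\ref{thm:tube0} (properness of the $G$-action together with the compactness of $G_z$) one can shrink the neighborhoods to $G_z$-invariant ones on which $\mathcal{T}$ is injective. The main obstacle I anticipate is purely bookkeeping: keeping the three nested splittings $\g = \g_\mu \oplus \o \oplus \l \oplus \n$, $\g_\mu = \h_\mu \oplus \p$, and $\h_\mu = \g_z \oplus \s$ consistent while performing the algebraic simplification above, since conceptually the result is now essentially a corollary of Theorems~\ref{thm:tube0} and~\ref{thm:shift}.
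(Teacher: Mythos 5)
Your proposal is correct and follows essentially the same route as the paper: compose the $\alpha=0$ tube $\mathcal{T}_0$ of Theorem \ref{thm:tube0} with the map $\mathcal{F}$ of Theorem \ref{thm:shift} (noting $K=G_z$ and that the center maps to $[e,0,0,0,\alpha]_{H_\mu}$, which lies in the domain of $\mathcal{T}_0$ since $0\diamond_\h\alpha=0$), then simplify $\tilde a\diamond_{\h_\mu}(b+\alpha)$ via the computation \eqref{eq:diamondGamma} to recover the stated $\tilde\nu$, $\eps$. The paper's proof is exactly this composition-plus-bookkeeping argument, so no further comparison is needed.
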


\begin{proof}
Using Theorem \ref{thm:tube0}, there is a Hamiltonian tube $\mathcal{T}_0:G\times_{H_\mu} V \to T^*(G\times_H S)$ with $V\subset \p^*\times \o \times S \times S^*$ satisfying $\mathcal{T}_0([e,0]_{H_\mu})=\varphi(e,\mu,0,0)$.
By Theorem \ref{thm:shift}, there is a map $\mathcal{F}: G\times_{G_z} W \to G\times_{H_\mu} (\p^*\times \o \times T^*S)$ with $W\subset (\p^*\oplus \s^*)\times \o \times B \times B^*$.

Note that $G\times_{H_\mu} V$ is an open $G$-invariant subset of $G\times_{H_\mu} (\p^*\times \o \times T^*S)$. Since $\mathcal{F}$ is continuous then the preimage of  $G\times_{H_\mu} V$ by $\mathcal{F}$ is open and it contains the point $[e,0,0,0,0]_{G_z}$ because $\mathcal{F}([e,0,0,0,0]_{G_z})=[e,0,0,0,\alpha]_{H_\mu}\subset G\times_{H_\mu} V$ since $0\diamond_\h \alpha=0\in \h_r^*$.
Therefore, we can choose small enough ${G_z}$-invariant neighborhoods of zero $ \s^*_r\subset \s^*$, $ \p^*_r\subset \p^*$, $ \o_r\subset \o$, $B_r\subset B$ and $B_r^*\subset B^*$ such that the composition $\mathcal{T}:=\mathcal{T}_0\circ \mathcal{F}: G\times_{G_z} (\s^*_r\oplus \p^*_r)\times \o_r \times B_r\times B^*_r \to T^*(G\times_H S)$ is well-defined and injective. Using Theorem \ref{thm:tube0} and \ref{thm:shift} we conclude that $\mathcal{T}$ is a Hamiltonian tube around $\varphi(e,\mu,0,\alpha)$.

More precisely, as $\mathcal{F}([g,\nu_\s+\nu_\p,\lambda,a,b]_{G_z})=[g,\nu_\p,\lambda,\widetilde{a},b+\alpha]_{H_\mu}$ with $\widetilde{a}=a+\Gamma(\nu_\s-a\diamond_\s b-\frac{1}{2}\lambda\diamond_\s\ad^*_\lambda \mu;b)$  then $(\mathcal{T}_0\circ\mathcal{F})([g,\nu_\s+\nu_\p,\lambda;a,b]_{G_z})=\varphi(\Phi(g,\widetilde\nu,\lambda;\eps);\widetilde{a},b+\alpha)$ where $\eps=\widetilde{a}\diamond_\l (b+\alpha)$ and
\[\widetilde{\nu} =\nu_\p+\frac{1}{2}\lambda\diamond_{\h_\mu}\ad_\lambda^* \mu+\widetilde{a} \diamond_{\h_\mu} (b+\alpha),\]
but using exactly the same computations that as in \eqref{eq:diamondGamma} we get $\widetilde{a} \diamond_{\h_\mu} (b+\alpha)=a\diamond_{{\g_z}} b+ \nu_\s-\frac{1}{2}\lambda\diamond_\s\ad^*_\lambda \mu$, that is $\widetilde{\nu}=\nu_\p+\nu_\s+\frac{1}{2}\lambda\diamond_{{\g_z}}\ad_\lambda^* \mu+a \diamond_{{\g_z}} b$.

\end{proof}

Note that if we assume that $\mu\in \g^*$ satisfies $\g_\mu=\g$ then $\o=0$ and the Hamiltonian tube $\mathcal{T}$ will be of the form
\begin{align*}
\mathcal{T}:G\times_{G_z} \left((\s^*_r\oplus \p^*_r) \times B_r\times B^*_r\right) &\longrightarrow T^*(G\times_H S)\\
[g,\nu_\s+\nu_\p;a,b]_{G_z} &\longmapsto \varphi(g,\mu+\tilde \nu;\tilde a,b+\alpha) \nonumber
\end{align*}
where
\[\tilde \nu =\nu_\p+\nu_\s+a \diamond_{{\g_z}} b,\quad   \tilde a =a+\Gamma(\nu_\s-a\diamond_\s b;b).\]
This map is the content of Theorem 31 of \cite{MR2293645}. That is, the map $\mathcal{T}$ coincides with the results of \cite{MR2293645} when we restrict to their totally isotropic hypothesis $\g=\g_\mu$. What happens in this case is that the map given by Theorem \ref{thm:tube0} becomes the trivial $\mu$-shift
\begin{align*}
\mathcal{T}_0:G\times_{H_\mu} \left(\p^*\times S\times S^*\right) &\longrightarrow T^*(G\times_H S) \\
[g,\nu;a,b]_{H_\mu} &\longmapsto \varphi(g,\mu+\nu+a\diamond_{\h_\mu} b;a,b). \nonumber
\end{align*}

The other extreme case is be when $\Gamma$ becomes trivial. This will happen for example if $S=0$, which is equivalent to assume  that locally $Q=G/H$. Fix a point $z=\varphi(e,\mu)\in T^*(G/H)$, as $G_z=H_\mu$ then $\s=0$ and as $S=0$ then $B=0$, therefore, according to \eqref{eq:gentube} $\mathcal{T}$ becomes
\begin{align*}
\mathcal{T}:G\times_{H_\mu}\left(\p^*_r\times \o_r\right) &\longrightarrow T^*(G/H) \\
[g,\nu_\p,\lambda]_{H_\mu} &\longmapsto \varphi(\Phi(g,\nu_\p+\frac{1}{2}\lambda\diamond_{\h_\mu}\ad_\lambda^* \mu,\lambda;0)). \nonumber
\end{align*}

\section{A fibered Bates-Lerman Lemma}
\label{sec:Bates-Lerman}

One of the most important consequences of the MGS model is that it provides a local description of the set of points with momentum $\mu$ which is very useful in the theory of singular reduction. This is the content of the following result that appeared in \cite{MR1486529} and previously in \cite{MR1127479} for $\mu=0$.

\begin{prop}
Let $(M,\omega)$ be a symplectic manifold supporting a Hamiltonian $G$-action with momentum map $\J$. Let $m\in M$, $\mu=\J(m)$ and $\mathcal{T}:G\times_{G_m}(\m^*_r\times N_r)\to M$ a Hamiltonian tube around $m$. There is an  open $G_\mu$-invariant neighborhood $U_M$  of $G_\mu \cdot m$ such that
\[U_M\cap \J^{-1}(\mu)=\mathcal{T}(Z)\]
where
\[Z=\{[g,\nu,\lambda,v]_{G_m}\in \mathcal{T}^{-1}(U_M) \mid g\in G_\mu,\ \nu=0,\ \J_N(v)=0  \}.\]
\label{prop:Bates-Lerman}
\end{prop}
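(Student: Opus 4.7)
The plan is to transport the statement to the MGS model via $\mathcal{T}$ and reduce it to a transversality between the coadjoint orbit of $\mu$ and an affine subspace of $\g^*$. Since $\mathcal{T}$ is a $G$-equivariant symplectomorphism sending $[e,0,0]_{G_m}$ to $m$, one has $\J\circ\mathcal{T}=\J_Y$, so the problem reduces to describing $\J_Y^{-1}(\mu)\cap\mathcal{T}^{-1}(U_M)$. By \eqref{eq:MGS-momentum}, $\J_Y[g,\nu,v]_{G_m}=\mu$ is equivalent to
\[
\Ad^*_g\mu - \mu = \nu + \J_N(v). \qquad(\star)
\]
Fix a $G_\mu$-invariant complement $\q$ of $\g_\mu$ in $\g$; the induced decomposition $\g^*=\g_\mu^\circ\oplus\q^\circ$ is $G_\mu$-invariant and identifies $\g_\mu^*\cong\q^\circ$, under which the right-hand side of $(\star)$ lies in $\q^\circ$.

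The easy inclusion $\mathcal{T}(Z)\subset U_M\cap \J^{-1}(\mu)$ is immediate: if $g\in G_\mu$, then $\Ad^*_g\mu=\mu$ and $(\star)$ forces $\nu+\J_N(v)=0$; since $\m^*$ and $\g_m^*$ are independent summands of $\g_\mu^*$, both $\nu$ and $\J_N(v)$ must vanish. For the converse, choose a small $G_m$-invariant open ball $V_\q\subset\q$ around $0$ and define
\[
U_M:=\mathcal{T}\bigl([G_\mu\cdot\exp(V_\q),\,\m^*_r,\,N_r]_{G_m}\bigr),
\]
an open $G_\mu$-invariant neighborhood of $G_\mu\cdot m$. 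Since $G_\mu\cdot\exp(V_\q)$ is a tubular neighborhood of $G_\mu$ in $G$, every $G_m$-equivalence class in $\mathcal{T}^{-1}(U_M)$ admits a representative $[k\exp(\xi),\nu,v]_{G_m}$ with $k\in G_\mu$ and $\xi\in V_\q$. Using $\Ad^*_k\mu=\mu$ together with the $G_\mu$-invariance of $\q^\circ$, the equation $(\star)$ forces $\Ad^*_{\exp(\xi)}\mu-\mu\in\q^\circ$, i.e.\ $(\Ad^*_{\exp(\xi)}\mu-\mu)\restr{\q}=0$.

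The heart of the argument is the transversality $T_\mu\Orb_\mu\oplus\q^\circ=\g^*$. Indeed $T_\mu\Orb_\mu=\{\ad^*_\eta\mu:\eta\in\g\}\subset\g_\mu^\circ$ (since $\mu([\zeta,\eta])=0$ for $\zeta\in\g_\mu$), and a dimension count gives equality. Consequently the smooth map $F\colon V_\q\to\q^*$, $F(\xi):=(\Ad^*_{\exp(\xi)}\mu-\mu)\restr{\q}$, has derivative at $0$ equal to the linear isomorphism $\xi\mapsto -(\ad^*_\xi\mu)\restr{\q}$, so after shrinking $V_\q$ the inverse function theorem yields $F^{-1}(0)=\{0\}$. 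Hence $\xi=0$, $g=k\in G_\mu$, and the easy direction supplies $\nu=0$ and $\J_N(v)=0$, establishing the reverse inclusion. The only substantive step is the transversality lemma just stated; everything else is routine bookkeeping concerning the tubular parametrization $G_\mu\cdot\exp(V_\q)$ and the $G_\mu$-equivariance of the various splittings of $\g$ and $\g^*$.
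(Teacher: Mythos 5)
Your overall strategy is sound and, at its core, it rests on the same transversality that underlies the paper's treatment: the paper quotes this proposition from \cite{MR1486529} and reproduces the relevant argument inside the proof of Proposition \ref{prop:fiberedBL}, where $\J_Y$ is factorized as $\gamma\circ\beta$ and one uses that $\gamma$ is a submersion near $[e,0]_{G_m}$ plus a dimension count — which is exactly the statement $T_\mu\Orb_\mu\oplus\iota(\g_\mu^*)=\g^*$ that you isolate. Your version replaces the dimension count on the quotient model $G\times_{G_m}\g_\mu^*$ by explicit coordinates $g=k\exp(\xi)$ upstairs and the inverse function theorem applied to $\xi\mapsto(\Ad^*_{\exp\xi}\mu-\mu)\restr{\q}$; this is a legitimate, more hands-on packaging of the same idea, and the easy inclusion and the final bookkeeping are fine.

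There is, however, one step that is wrong as stated: you begin by fixing a $G_\mu$-invariant complement $\q$ of $\g_\mu$ in $\g$. Such a complement need not exist when $G_\mu$ is noncompact. For instance, for $G=SL(2,\R)$ and $\mu$ nilpotent (a case the paper itself treats in Section \ref{sec:explicitexamples}), $\g_\mu=\R X$ with $X$ nilpotent, and any $\ad_X$-invariant subspace complementary to $\R X$ would contain a vector $w$ with $\ad_X^2 w$ a nonzero multiple of $X$, forcing $X$ into the complement — a contradiction; so no $G_\mu$-invariant complement exists. Fortunately your argument never really uses this invariance: in the representative $k\exp(\xi)$ the factor $k\in G_\mu$ sits on the left, so $\Ad^*_{k\exp\xi}\mu=\Ad^*_{\exp\xi}\Ad^*_k\mu=\Ad^*_{\exp\xi}\mu$ needs only $\Ad^*_k\mu=\mu$; openness of $U_M$ follows from openness of the quotient map $\pi_{G_m}$, and $G_\mu$-invariance of $U_M$ from the left $G$-action alone. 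What you do need, and should say explicitly, is: (i) a complement invariant under the compact group $G_m$ (this always exists by averaging, and is convenient for the twisted-product bookkeeping), and (ii) compatibility with the embedding $\iota:\g_\mu^*\hookrightarrow\g^*$ implicit in \eqref{eq:MGS-momentum} — the right-hand side of your equation $(\star)$ lies in $\iota(\g_\mu^*)$, i.e.\ in the annihilator of the complement used in the construction of the tube, so either take $\q$ to be that complement or note that $\xi\in\q\mapsto(\ad^*_\xi\mu)\restr{\q_0}$ is an isomorphism for any pair of complements $\q,\q_0$ of $\g_\mu$. With these adjustments your proof is complete and essentially equivalent to the Bates--Lerman argument.
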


Recall that,  in some cases, we saw that the domain of the cotangent bundle Hamiltonian tube is unbounded in the $S^*$ direction (see Theorem \ref{thm:tube0}).
  In this section we will present an important consequence of this which is that for cotangent-lifted actions the open neighborhood $U_M$ can be global in the vertical direction. That is, it will be of the form $\tau^{-1}(\mathcal{U}_Q)$ where $\tau:T^*Q\to Q$ is the natural projection and $\mathcal{U}_Q$ is a neighborhood  in $Q$.

\begin{prop}
Consider the Hamiltonian tube $\mathcal{T}_0:G\times_{H_\mu} (\p_r^*\times \o_r \times (T^*S)_r)\to T^*(G\times_H S)$
of Theorem \ref{thm:tube0} at the point $\varphi(e,\mu,0,0)\in T^*(G\times_H S)$. There is  a $G_\mu$-invariant neighborhood $\mathcal{U}_Q$ of $[e,0]_H\in G\times_H S$ such that
\begin{equation}\tau^{-1}(\mathcal{U}_Q)\cap \J^{-1}(\mu) = \mathcal{T}_0(Z)\label{eq:fiberedBL}\end{equation}
where
\[Z=\{[g,\nu,\lambda,a,b]_{H_\mu}\in \mathcal{T}_0^{-1}(\tau^{-1}(\mathcal{U}_Q)) \mid g\in G_\mu,\ \nu=0,\ \frac{1}{2}\lambda\diamond_{\h_\mu} \ad^*_\lambda\mu+a\diamond_{\h_\mu} b=0  \}.\]
\label{prop:fiberedBL}
\end{prop}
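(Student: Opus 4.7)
The plan is to combine the formula for the MGS momentum map with the vertical-global domain of $\mathcal{T}_0$ built in Theorem~\ref{thm:tube0}. Since $\mathcal{T}_0$ is a Hamiltonian tube, $\J\circ\mathcal{T}_0 = \J_Y$ where \eqref{eq:MGS-momentum} reads
$$\J_Y([g,\nu,\lambda,a,b]_{H_\mu}) = \Ad^*_{g^{-1}}\bigl(\mu + \iota(\nu + \J_{N_0}(\lambda,a,b))\bigr),$$
with $\iota:\g_\mu^*\hookrightarrow \g^*$ the $H_\mu$-invariant injection of Section~\ref{sec:reduced-MGS} and $\J_{N_0}(\lambda,a,b)=\tfrac{1}{2}\lambda\diamond_{\h_\mu}\ad^*_\lambda \mu + a\diamond_{\h_\mu}b$. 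From this, the inclusion $\mathcal{T}_0(Z)\subseteq \tau^{-1}(\mathcal{U}_Q)\cap \J^{-1}(\mu)$ is automatic: containment in $\tau^{-1}(\mathcal{U}_Q)$ is built into the definition of $Z$, and the three algebraic conditions $g\in G_\mu$, $\nu=0$, $\J_{N_0}=0$ collapse the above expression to $\Ad^*_{g^{-1}}\mu=\mu$.

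For the converse I would proceed in two independent steps. First, after possibly shrinking the parameter ranges $\p^*_r, \o_r, \h_r^*$ from Theorem~\ref{thm:tube0}, every point of the MGS domain with $\J_Y=\mu$ automatically lies in $Z$. The equation can be rewritten $\Ad^*_g\mu = \mu + \iota(\nu + \J_{N_0})$, pinning the left-hand side to the affine subspace $\mu + \iota(\g_\mu^*)$. Using the non-degeneracy of $\Omega^\mu$ on any $H_\mu$-invariant complement of $\g_\mu$ (as in the proof of Proposition~\ref{prop:liealgsplitting}), a short computation shows $T_\mu\Orb_\mu\cap \iota(\g_\mu^*)=0$, and by dimension count this is a direct-sum decomposition of $\g^*$. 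Hence $\Orb_\mu$ meets the affine subspace $\mu+\iota(\g_\mu^*)$ transversely and, close to $\mu$, only at $\mu$ itself. The constraint $a\diamond_\h b\in \h_r^*$ bounds $a\diamond_{\h_\mu}b$ (since $\h_\mu\subset \h$), so after shrinking parameters the right-hand side is close to $\mu$, forcing $\Ad^*_g\mu=\mu$ and hence $g\in G_\mu$. The residual equation $\nu+\J_{N_0}=0$ then splits under $\g_\mu^*=\p^*\oplus \h_\mu^*$ into $\nu=0$ and $\J_{N_0}(\lambda,a,b)=0$, placing the point in $Z$.

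Second, I would construct a $G_\mu$-invariant $\mathcal{U}_Q$ so that $\tau^{-1}(\mathcal{U}_Q)\cap \J^{-1}(\mu)$ lies in the image of $\mathcal{T}_0$. Take $\mathcal{U}_Q$ to be the $G_\mu$-saturation of a product-type neighborhood $[V,S_0]_H$ of $[e,0]_H$, with $V$ a small neighborhood of $e$ in $G$ and $S_0$ a small $H$-invariant neighborhood of $0\in S$. Any $(q,p)\in \tau^{-1}(\mathcal{U}_Q)\cap\J^{-1}(\mu)$ may, after acting by a suitable element of $G_\mu$, be assumed to project to $q=[v,a']_H$ with $v\in V$ and $a'\in S_0$. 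Lift $(q,p)$ through $\varphi$ to $(v,\eta,a',b')\in \J_{H^T}^{-1}(0)\subset T^*G\times T^*S$; the $G$-equivariance of $\J_{G^L}$ forces $\eta = \Ad^*_v\mu$, and the zero-$\h$-momentum condition yields $a'\diamond_\h b'=\Ad^*_v\mu\rrestr{\h}$, which is small when $v$ is close to $G_\mu$. In this regime, inverting the restricted tube $\Phi$ near $(e,\mu)$ via Proposition~\ref{prop:restrtubes} produces the required parameters $(g,\nu,\lambda,\eps)$, and taking $(a,b):=(a',b')$ completes the preimage in the domain of $\mathcal{T}_0$.

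The main obstacle is precisely this coverage step: $\tau^{-1}(\mathcal{U}_Q)\cap \J^{-1}(\mu)$ must sit inside the image of $\mathcal{T}_0$ even though fibers of $\tau$ are non-compact. This is exactly what the vertical-global domain $(T^*S)_r$ from Theorem~\ref{thm:tube0} was engineered for: the only restriction there is $a\diamond_\h b\in\h_r^*$, and the momentum equation $a'\diamond_\h b'=\Ad^*_v\mu\rrestr{\h}$ automatically satisfies this constraint once $v$ is sufficiently close to $G_\mu$. The interplay between that domain design and the momentum transversality is what upgrades the classical Bates--Lerman statement (Proposition~\ref{prop:Bates-Lerman}) to the fibered version.
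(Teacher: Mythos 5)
Your two inclusions are organized sensibly, and your coverage step is in fact a more direct route than the paper's: where the paper builds the auxiliary map $f$ on $U_G\times\p^*_0\times\o_0\times\h_0^*$, proves it is a submersion, and then works modulo an unknown twist $h\in H$, you simply invert the restricted tube $\Phi$ at $(v,\Ad^*_v\mu)$, which lies in its image once $v$ is close to $e$ (and, by $G^L$-equivariance, close to $G_\mu$). This works, and shrinking $\mathcal{U}_Q$ in the $S$-direction is harmless since the statement only asks for some $G_\mu$-invariant neighborhood. But note that "inverting $\Phi$ produces the required parameters" hides a real verification: $\mathcal{T}_0$ does not feed $\Phi$ free parameters, it feeds $\tilde\nu=\nu+\tfrac12\lambda\diamond_{\h_\mu}\ad^*_\lambda\mu+a\diamond_{\h_\mu}b$ with $\nu\in\p^*$ and $\eps=a\diamond_\l b$; that the parameters obtained from $\Phi^{-1}(v,\Ad^*_v\mu)$ are compatible with this format (i.e.\ that the $\h_\mu^*$-part of the second argument and the $\l^*$-argument come out right with $(a,b)=(a',b')$) is exactly the content of the $H_\mu^T$-momentum identity \eqref{eq:simpletubeH_mu_momentum} together with item 4 of Definition \ref{def:restrictedtube}, the same identities the paper invokes at this point; they should be made explicit.

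The genuine gap is your first converse step. You claim that, after shrinking only $\p^*_r,\o_r,\h^*_r$, \emph{every} point of the MGS domain with $\J_Y=\mu$ lies in $Z$, on the grounds that $\Orb_\mu$ meets the affine slice $\mu+\iota(\g_\mu^*)$ transversally and hence, close to $\mu$, only at $\mu$. Transversality at $\mu$ controls the orbit only in its own (immersed) topology, i.e.\ for group elements close to $G_\mu$; in the model $G\times_{H_\mu}(\cdots)$ the group variable is unrestricted, and for a general Lie group the coadjoint orbit need not be embedded, or even locally closed, near $\mu$, so points $\Ad^*_g\mu\neq\mu$ with $g$ far from $G_\mu$ may lie in the slice arbitrarily close to $\mu$. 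This is precisely why the paper does not assert such a global statement: its characterization \eqref{eq:BLeq} is proved only on $\beta^{-1}(U_\text{BL})$, where $U_\text{BL}$ is a $G_\mu$-invariant neighborhood of $G_\mu\times_{H_\mu}\{0\}$ inside $G\times_{H_\mu}\g_\mu^*$ (so the group variable is confined near $G_\mu$), and the second half of its proof (the submersion $f$, the definition of $\mathcal{U}_G$ through $U_{\g^*}$ and $\h_0^*$) exists to guarantee that the constructed preimages land in that set. Your argument is repairable along the same lines, because the preimages produced by your second step have group component in a small neighborhood of $e$ after the initial $G_\mu$-translation: it suffices to prove the localized statement (group component near $G_\mu$, parameters small, $\J_Y=\mu$ implies membership in $Z$), which follows from the submersion/dimension count for $\gamma([g,\sigma])=\Ad^*_{g^{-1}}(\mu+\sigma)$ plus $G_\mu$-invariance, and then to translate back by $G_\mu$. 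As written, however, the unrestricted claim is not established by the transversality argument you give, and it is exactly the point where care is needed for noncompact $G$.
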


\begin{proof}
As in the previous section, $N_0=\o\times T^*S$
will be the symplectic slice at $\varphi(e,\mu,0,0)$ with the symplectic form \eqref{eq:symplslice}. The Hamiltonian tube puts the momentum map $\J$ in the normal form \eqref{eq:MGS-momentum}, that is $\J\circ \mathcal{T}_0=\J_Y$. We will now proceed as in the proof of Proposition \ref{prop:Bates-Lerman} in \cite{MR1486529} and we will factorize $\J_Y=\gamma\circ \beta$, with\begin{align*}
\beta:G\times_{H_\mu}(\p^*\times N_0) & \longrightarrow G\times_{H_\mu}\g_\mu^*, &  \gamma:G\times_{H_\mu}\g_\mu^* &\longrightarrow \g^* \\
[g,\nu,v]_{H_\mu} &\longmapsto [g,\nu+\J_{N_0}(v)] &[g,\nu]_{H_\mu} &\longmapsto \Ad_{g^{-1}}^*(\mu+\nu).
\end{align*}
Using this factorization it will be easy to describe $\J_Y^{-1}(\mu)$. Note that, since the map $$T_{[e,0]_{H_\mu}}\gamma \cdot (\xi,\dot \nu)=-\ad^*_\xi \mu +\dot \nu$$ is onto, $\gamma$ is a submersion near $[e,0]_{H_\mu}$ but by $G$-equivariance there is a $G$-invariant open set $U_\text{subm}\subset G\times_{H_\mu} \g^*$ where $\gamma$ is a submersion. Therefore, $\gamma^{-1}(\mu)\cap U_\text{subm}$ is a manifold of dimension $\dim G_\mu-\dim H_\mu$. As $G_\mu\times_{H_\mu} \{0\}\subset \gamma^{-1}(\mu)$, $G_\mu\times_{H_\mu} \{0\}$ has to be an open submanifold of $\gamma^{-1}(\mu)\cap U$, that is, there is an open set $U_\text{BL}\subset U_\text{subm}$ with $ G_\mu\times_{H_\mu} \{0\} = \gamma^{-1}(\mu)\cap U_\text{BL}$. By equivariance of $\gamma$ we can assume that $U_\text{BL}$ is $G_\mu$-invariant. Applying $\beta^{-1}$ on the equality $ G_\mu\times_{H_\mu} \{0\} = \gamma^{-1}(\mu)\cap U_\text{BL}$, we get
\begin{equation}\{[g,0,v]_{H_\mu} \in G\times_{H_\mu}(\p^*\times N_0) \mid \J_{N_0}(v)=0\} = \J_Y^{-1}(\mu)\cap \beta^{-1}(U_\text{BL}).\label{eq:BLeq}\end{equation}

In this setting, let $U_G$ be a $G_\mu^L\times H_\mu^R$-invariant neighborhood of $e\in G$, $\p^*_0\subset\p^*$,  $\o_0\subset \o$ $H_\mu$-invariant neighborhoods of zero and $\h_0^*\subset\h^*$ a $H$-invariant neighborhood of zero such that
\[\{[g,\nu,\lambda,a,b]_{H_\mu} \mid g\in U_G,\ \nu\in \p^*_0,\ \lambda\in \o_0,\ a\diamond_\h b \in \h_0^*\} \subset \big(G\times_{H_\mu} (\p_r^*\times \o_r \times (T^*S)_r)\big) \cap \beta^{-1}(U_\text{BL}).\]

Let now $\Phi$ be the restricted tube used in the definition of $\mathcal{T}_0$ (see Theorem \ref{thm:tube0})
and consider the map
\begin{align*}
f:U_G\times  \p^*_0 \times \o_0 \times \h_0^* & \longrightarrow T^*G/H^T \\
(g,\nu,\lambda,\rho) & \longmapsto \pi_{H^T}(\Phi(g,\nu+\frac{1}{2}\lambda\diamond_{\h_\mu}\ad^*_\lambda \mu+\rho\rrestr{\h_\mu},\lambda, \rho\rrestr{\l})).
\end{align*}
Note that this expression is very similar to \eqref{eq:alpha0tube} but we have changed the dependence on $T^*S$ by a dependence on $\h_\mu^*$.  We claim that this map is a submersion at $(e,0,0,0)$. Using the notation of the proof of Proposition \ref{prop:restrtubes}, if $v=(\xi,\dot\nu,\dot\lambda,\dot \eps)\in T_{(e,0,0,0)}(G\times \g_\mu^*\times \o\times \l^*)$,
\begin{align*}T_{(e,0,0,0)}\Phi\cdot v &= T_{(e,0,0,0)}(\Theta\circ \iota_W\circ \psi^{-1})\cdot v=T_{(e,0,0,0)}\Theta\cdot (\xi,\dot\nu,\dot\lambda+\sigma^{-1}\cdot \eps) \\
 &= (\xi+\dot \lambda+\sigma^{-1}\cdot \dot \eps,\dot \nu-\dot \eps+\ad^*_{\dot \lambda}\mu)
\end{align*}
where $\sigma:\n\to \l^*$ is the linear isomorphism $\sigma\cdot \zeta=\ad^*_\zeta\mu\rrestr{\l}$. Applying this result to $f$ we get
\begin{equation}T_{(e,0,0,0)}f\cdot(\xi,\dot \nu,\dot \lambda,\dot \rho)=T_{(e,\mu)}\pi_{H^T}\cdot (\xi+\dot\lambda+\sigma^{-1}\cdot \rho\rrestr{\l},\dot\nu+\ad_{\dot \lambda}^*\mu+\dot \rho).\label{eq:linearf} \end{equation}
But, since the splitting of Proposition \ref{prop:liealgsplitting} induces the dual decomposition $\g^*=\h_\mu^*\oplus \p^*\oplus \o^*\oplus \l^*\oplus \n^*$ each element of $\g^*$ can be expressed as $\dot \rho\rrestr{\h_\mu}+\dot\nu+\ad_{\dot \lambda}^*\mu+\dot\rho\rrestr{\l}+\ad^*_\eta\mu$ for some $\dot\rho\in \h^*$, $\dot\nu\in \p^*$, $\dot \lambda\in \o^*$ and $\eta\in \h$. Finally $\Ker(T_{(e,\mu)}\pi_{H^T})=\{(\eta,\ad^*_\eta\mu)\mid \eta\in \h\}$ and \eqref{eq:linearf} imply that $T_{(e,0,0,0)}f$ is a surjective linear map.

Since $f$ is $G$-equivariant $f$ is a submersion on a neighborhood of $G\cdot (e,0,0,0)$ and since submersions are open maps the image of $f$ contains an open neighborhood of $G\cdot [e,\mu]_H$ so there must exist  a neighborhood $U_{\g^*}$ of $\mu\in\g^*$ such that \begin{equation}\pi_{H}(G\times_H U_{\g^*})\subset f(U_G\times  \p^*_0 \times \o_0 \times \h_0^*). \label{eq:f-surjective}\end{equation}

Define $\mathcal{U}_G:=\{g\in U_G \mid \Ad_g^*\mu\in U_{\g^*}\} \cap \{g\in U_G\mid \Ad_g^*\mu\rrestr{\h}\in \h_0^*\}$ which is an open neighborhood of $e\in G$ and let $\mathcal{U}_Q:=\mathcal{U}_G\times_H S$ which is an open neighborhood of $[e,0]_H\in Q$. We will now check that $\mathcal{U}_Q$ satisfies \eqref{eq:fiberedBL}.
\begin{itemize}
\item $\tau^{-1}(\mathcal{U}_Q)\cap \J^{-1}(\mu) \supset \mathcal{T}_0(Z)$.

This inclusion is trivial because if $[g,0,\lambda,a,b]_{H_\mu}\in Z$ 
then $\mathcal{T}_0([g,0,\lambda,a,b]_{H_\mu})\in \tau^{-1}(\mathcal{U}_Q)$ and $(\J\circ \mathcal{T}_0)([g,0,\lambda,a,b]_{H_\mu})=\J_Y([g,0,\lambda,a,b]_{H_\mu})=\Ad_{g^{-1}}^*(\mu+\J_{N_0}(\lambda,a,b))=\Ad_{g^{-1}}^*\mu=\mu$.

 \item $\tau^{-1}(\mathcal{U}_Q)\cap \J^{-1}(\mu) \subset \mathcal{T}_0(Z)$.

Let $z\in \tau^{-1}(\mathcal{U}_Q)\cap \J^{-1}(\mu)$. Using the cotangent reduction map $\varphi$ (see  \eqref{eq:varphi-Z}),
 there is an element $(g,\nu,a,b)$ such that $\varphi(g,\nu,a,b)=z$, but, as $\tau(z)\in \mathcal{U}_Q$ then $g\in \mathcal{U}_G$. Since $\varphi(g,\nu,a,b)\in \mathbf{J}^{-1}(\mu)$, using \eqref{eq:Zmomentummaps} we have $\Ad_{g^{-1}}^*\nu=\mu$. Additionally, as $(g,\nu,a,b)\in \mathbf{J}^{-1}_{H^T}(0)$, then $\nu\rrestr{\h}=a\diamond_\h b$. Using $\nu=\Ad^*_{g}\mu$ this implies the relation $(\Ad_{g}^*\mu) \rrestr{\h}=a\diamond_\h b$.

As $g\in \mathcal{U}_G$, using the definition of $\mathcal{U}_G$ we have $(g,\Ad^*_g\mu)\in G\times U_{\g^*}$.
Equation \eqref{eq:f-surjective} implies that there is a point $(g',\nu',\lambda,\rho)\in U_G\times  \p^*_0 \times \o_0 \times \h_0^*$ such that $f(g',\nu',\lambda,\rho)=[g,\Ad_g^*\mu]_H$. Therefore there is $h\in H$ such that
\begin{equation}(gh^{-1},\Ad^*_{h^{-1}}\Ad^*_g\mu)=\Phi(g',\nu'+\frac{1}{2}\lambda\diamond_{\h_\mu}\ad^*_\lambda\mu+\rho\rrestr{\h_\mu},\rho\rrestr{\l}).\label{eq:image_eq}\end{equation}

Moreover, using \eqref{eq:simpletubeH_mu_momentum} and \eqref{eq:implicitPhi} it can be checked that the $H_\mu^T$-momentum of a restricted $G$-tube is $\J_{H_\mu^T}(\Phi(g,\nu,\lambda,\eps))=-\nu\rrestr{\h_\mu}+\frac{1}{2}\lambda\diamond_{\h_\mu}\ad^*_\lambda\mu$. Therefore taking the $H_\mu^T$-momentum on the previous equation
\[-(\Ad^*_{h^{-1}}\Ad^*_g\mu)\rrestr{\h_\mu}=-\frac{1}{2}\lambda\diamond_{\h_\mu}\ad^*_\lambda\mu-\rho\rrestr{\h_\mu}+\frac{1}{2}\lambda\diamond_{\h_\mu}\ad^*_\lambda\mu=-\rho\rrestr{\h_\mu}.\]
Now, using item 4. in Definition \ref{def:restrictedtube} we have that   $H^T$-momentum restricted to $\l^*\subset \g^*$ in \eqref{eq:image_eq} becomes the equality
\[-(\Ad^*_{h^{-1}}\Ad^*_g\mu)\rrestr{\l}=-\rho\rrestr{\l}. \]
That is $(\Ad^*_{h^{-1}}\Ad^*_g\mu)\rrestr{\h}=\rho$. But, as $\Ad^*_g\mu\rrestr{\h}=\nu\rrestr{\h}=a\diamond_\h b$, it follows that $\rho=\Ad_{h^{-1}}^*(a\diamond_\h b)=(h\cdot a)\diamond_\h (h\cdot b)$ and therefore
\begin{align*}
\mathcal{T}_0([g',\nu',\lambda,h\cdot a,h\cdot b]_{H_\mu}) &=\varphi(\Phi(g',\nu'+\frac{1}{2}\lambda\diamond_{\h_\mu}\ad^*_\lambda \mu+\rho\rrestr{\h_\mu},\lambda,(h\cdot a)\diamond_\l(h\cdot b);h\cdot a,h\cdot b)\\
&=\varphi(\Phi(g',\nu'+\frac{1}{2}\lambda\diamond_{\h_\mu}\ad^*_\lambda \mu+\rho\rrestr{\h_\mu},\lambda,\rho\rrestr{\l};h\cdot a,h\cdot b) \\
&=\varphi(gh^{-1},\Ad^*_{h^{-1}}\Ad^*_g\mu,h\cdot a,h\cdot b)\\
&=\varphi(g,\Ad^*_g\mu,a,b)=\varphi(g,\nu,a,b).
\end{align*}
Finally, as $g\in \mathcal{U}_G$, $\Ad^*_g\mu\rrestr{\h}\in \h_0^*$ and  $\h_0^*$ is $H$-invariant, then  $(h\cdot a)\diamond_\h(h\cdot b)=\Ad^*_{h^{-1}}(\Ad^*_g\mu\rrestr{\h})\in \h^*_0$. This observation implies that $(g',\nu',\lambda',h\cdot a,h\cdot b)\in \pi_{H_\mu}^{-1}(\beta^{-1}(U_\text{BL}))$. Using the characterization \eqref{eq:BLeq}, $\nu'=0$, $g\in G_\mu$ and $\J_{N_0}(\lambda',h\cdot a,h\cdot b)=0$, that is $[g',0,\lambda',h\cdot a,h\cdot b]_{H_\mu}\in Z$ as we wanted to show.

\end{itemize}

\end{proof}

\begin{rem}
 Note that we have started with a tube centered around $\varphi(e,\mu,0,0)$. In general if we consider a tube around $\varphi(e,\mu,0,\alpha)$ we can not expect  its image to be global in the $B^*$ direction. This is because all the points in the model space $G\times_{G_z} \big( (\s^*_r\oplus \p^*_r)\times \o_r \times B_r\times B^*_r\big)$
must  have $G$-isotropy conjugated to a subgroup of $G_z$.
 From this observation we can conclude that, in general, $(\s^*_r\oplus \p^*_r)\times \o_r \times B_r\times B^*_r$ will not be an open neighborhood containing containing points of the form $(0,0,0,b)$ for arbitrary large $b\in B^*$. Indeed, if  that was true, we would have $(0,0,0,\alpha)\in (\s^*_r\oplus \p^*_r)\times \o_r \times B_r\times B^*_r$ which would imply $\mathcal{T}([e,0,0,0,-\alpha]_{G_z})=\varphi(e,\mu,0,0)$. But this is a point  with $G$-isotropy $H_\mu$ and in general, $G_z\subsetneq H_\mu$, producing a contradiction.

\end{rem}

\section{Explicit examples}
\label{sec:explicitexamples}

In Proposition \ref{prop:simpleGtube} we proved the existence of simple $G$-tubes using  Moser's trick. In this section we will write down  the actual differential equation that must be solved.
We will see that if $\dim \q=2$ then the simple $G$-tube will be a scaling of an exponential map and we will compute it explicitly for $SO(3)$ and $SL(2,\R)$. From the explicit $SO(3)$ restricted tube in Subsection \ref{sec:so3onr3} we will present the Hamiltonian tube for the natural action of $SO(3)$ on $T^*\R^3$ generalizing the final example of \cite{MR2293645} to $\mu \neq 0$.

We will compute explicitly the flow that determines a simple $G$-tube. For that we are going to use the notation of the proof of Proposition \ref{prop:simpleGtube}.

Recall that we constructed the simple $G$-tube as the composition $\Theta=F\circ \Psi_1$ where $$F(g,\nu,\lambda)=(g \exp(\lambda),\Ad_{ \exp(\lambda)}^*(\nu+\mu))$$ (see \eqref{eq:defF}) and $\Psi_1$ is the time-1
flow of the time dependent vector field $X_t$ that satisfies the Moser equation associated with $\theta_t=tF^*\theta_{T^*G}+(1-t)\theta_Y$, that is, $i_{X_t}(-\ed \theta_t)=\frac{\partial\theta_t}{\partial t}$.

Moser's equation can be written explicitly in this case. Using the above expression of $F$ and \eqref{eq:oneform} we have
\begin{align*}
F^*\theta_{T^*G}(g,\nu,\lambda)(\xi,\dot \nu,\dot \lambda) &=\lpair \Ad_{ \exp(\lambda)}^*(\nu+\mu),\Ad_{ \exp(\lambda)}^{-1}\xi+T_eL_{\exp(\lambda)}^{-1}T_\lambda \exp(\dot\lambda)\rpair \\
&= \lpair \nu+\mu,\xi\rpair + \lpair \nu+\mu,\Ad_{\exp(\lambda)}T_eL_{\exp(\lambda)}^{-1}T_\lambda \exp (\dot \lambda)\rpair \\
&= \lpair \nu+\mu,\xi\rpair + \lpair \nu+\mu,T_eR_{\exp(\lambda)}^{-1} T_\lambda \exp (\dot \lambda)\rpair.
\end{align*}
The last term can be expressed a as a series of Lie brackets (see for example \cite{duistermaat2000lie})
\[M(\lambda)\cdot \dot \lambda:=T_eR_{\exp(\lambda)}^{-1} T_\lambda \exp (\dot \lambda)=\sum_{n\ge 0}\frac{1}{(n+1)!}\ad^n_\lambda\dot\lambda.\]
And, in fact, this series is just the pullback of the right Maurer-Cartan form $\varpi^R(g)=T_eR_g^{-1}$ by the restricted exponential $\exp\rrestr{\q}:\q\to G$. Therefore using the Maurer-Cartan relation
\begin{equation}
\begin{split}(\ed M)(X,Y)=\ed(\exp^*\varpi^R)(X,Y)=\exp^*(\ed\varpi^R)(X,Y)=[\exp^*\varpi^R(X),\exp^*\varpi^R(Y)]\\
=[M(X),M(Y)].
  \end{split}\label{eq:M-MC}
\end{equation}
Now, since $\theta_t(g,\nu,\lambda)(\xi,\dot \nu,\dot \lambda)=\langle \mu+\nu,\xi\rangle+t\langle \mu+\nu,M(\lambda)\cdot \dot \lambda \rangle+(1-t)\frac{1}{2}\langle \mu,[\lambda,\dot \lambda]\rangle$ (using \eqref{eq:M-MC}) the exterior derivative $\omega_t=-\ed\theta_t$ simplifies as
\begin{equation*}
 \begin{split}
  \omega_t(g,\nu,\lambda)(\xi_1,\dot \nu_1,\dot \lambda_1)(\xi_2,\dot \nu_2,\dot \lambda_2)=\lpair \dot \nu_2,\xi_1\rpair-\lpair \dot \nu_1,\xi_2\rpair+\lpair \nu+\mu,[\xi_1,\xi_2]\rpair+\\
  +t\lpair \dot \nu_2,M(\lambda)\dot \lambda_1\rpair-t\lpair \dot \nu_1,M(\lambda)\dot \lambda_2\rpair
  +t\lpair \nu+\mu,-[M(\lambda)\cdot \dot \lambda_1,M(\lambda)\cdot \dot \lambda_2]\rpair-(1-t)\lpair \mu,[\dot \lambda_1,\dot \lambda_2]\rpair.
 \end{split}
 %\label{eq:omegat}
\end{equation*}
Also, the expression $\frac{\partial \theta_t}{\partial t}=\theta_1-\theta_0$ can be written as 
\begin{equation}\frac{\partial \theta_t}{\partial t}(g,\nu,\lambda)(\xi,\dot \nu,\dot \lambda)=\langle \nu+\mu,M(\lambda)\cdot \dot \lambda\rangle-\langle \mu,\frac{1}{2}\ad_\lambda{\dot\lambda}+\dot\lambda\rangle. \label{eq:d-dt-oneform}\end{equation}

From now on we will assume that $\dim \q=2$. Note that the one-form
\begin{align*}
  \omega_t(g,\nu,\lambda)(0,0,\lambda)(\xi_2,\dot \nu_2,\dot \lambda_2) &= t\lpair \dot \nu_2,\lambda\rpair
  +t\lpair \nu+\mu,-[\lambda,M(\lambda)\cdot \dot \lambda_2]\rpair-(1-t)\lpair \mu,[\lambda,\dot \lambda_2]\rpair \\
  &=t\lpair \nu+\mu,-M(\lambda)[\lambda,\dot \lambda_2]\rpair-(1-t)\lpair \mu,[\lambda,\dot \lambda_2]\rpair
\end{align*}
and \eqref{eq:d-dt-oneform} have the same kernel $\g\oplus \g_\mu^*\oplus \R\cdot \lambda\subset T_{(g,\nu,\lambda)} (G\times \g_\mu^*\times \q)$. Therefore, there is a real-valued function
$f$ such that
\[\omega_t(g,\nu,\lambda)(0,0,f(\nu,\lambda,t)\lambda)(\xi_2,\dot \nu_2,\dot \lambda_2)=\frac{\partial \theta_t}{\partial t}(g,\nu,\lambda)(\xi_2,\dot \nu_2,\dot \lambda_2).\]
That is, $X_t(g,\nu,\lambda)=f(\nu,\lambda,t)\frac{\partial}{\partial\lambda}$ and in particular $\Psi_t(g,\nu,\lambda)=(g,\nu,m_t(\nu,\lambda)\lambda)$ for certain scaling factor $m_t:\g_\mu^*\times \q\to \R$. We will obtain an equation that fully determines $m_1$ and therefore the map $\Psi_1$ and the simple $G$-tube.

Taking the time-derivative of the time-dependent pull-back (see \eqref{eq:dt_time_flow})
\begin{align*}
 \frac{\partial}{\partial t}\left(\Psi^*_t\theta_t\right)&=\Psi^*_t\left( (\ed i_{X_t}+i_{X_t}\ed)\theta_t+\frac{\partial\theta_t}{\partial t} \right)=\Psi^*_t( \ed i_{X_t}\theta_t)=\Psi^*_t( \ed i_{X_t}(\theta_0+t(\theta_1-\theta_0))=\\
 &=\Psi^*_t( \ed i_{X_t}\theta_0)=\ed(\Psi^*_t (i_{X_t} \theta_0)).
\end{align*}
Additionally
\[\Psi^*_t (i_{X_t} \theta_0)=\Psi^*_t(\langle \mu,\lambda\rangle f(\nu,\lambda,t))=\langle \mu,\lambda\rangle f(\nu,m_t(\nu,\lambda)\lambda,t))=\frac{\partial}{\partial t} \langle \mu,m_t(\nu,\lambda) \lambda\rangle\]
from where we get \[\frac{\partial}{\partial t}\left(\Psi^*_t\theta_t-\ed\langle \mu,m_t(\nu,\lambda) \lambda\rangle\right)=0.\]

This equation implies that $\Psi_t$ satisfies the following equation on one-forms
\begin{equation}\Psi^*_1\theta_1-\ed\langle \mu,m_1(\nu,\lambda) \lambda\rangle=\theta_0-\ed\langle \mu,\lambda\rangle. \label{eq:oneform-preservation}\end{equation}
But this equation does not depend on the derivatives of the scaling factor $m_1$ because
\begin{align*}
 \Psi^*_1\theta_1(\xi,\dot \nu,\dot \lambda) &=\langle \mu+\nu,(\D_\nu m_1\cdot \dot \nu+\D_\lambda m_1\cdot \dot \lambda)\lambda+M(m_1\lambda)\cdot(m_1\dot \lambda)\rangle=\\
 &=\langle \mu,\lambda\rangle (\D_\nu m_1\cdot \dot \nu+\D_\lambda m_1\cdot \dot \lambda)+\langle \mu+\nu,M(m_1\lambda)\cdot(m_1\dot \lambda)\rangle\
\end{align*}
and
\[\ed\langle \mu,m_1 \lambda\rangle(\xi,\dot \nu,\dot \lambda)=\langle \mu,\lambda\rangle (\D_\nu m_1\cdot \dot \nu+\D_\lambda m_1\cdot \dot \lambda)+\langle \mu,m_1\dot \lambda\rangle.\]
Since $\langle \mu,[\lambda,\dot \lambda]\rangle$ is a non-vanishing one-form on the two dimensional space $\q$ with kernel $\R\cdot \lambda$
 and $\langle \mu+\nu,M(\lambda)\cdot\dot \lambda\rangle-\langle \mu,\dot \lambda\rangle$ has also kernel $\R\cdot\lambda$ then there is an analytic function $h(\lambda,\nu)$ such that
$$\langle \mu+\nu,M(\lambda)\cdot\dot \lambda\rangle-\langle \mu,\dot \lambda\rangle=h(\lambda,\nu)\langle \mu,[\lambda,\dot \lambda]\rangle.$$ 
With this notation, \eqref{eq:oneform-preservation} becomes the non-linear equation
\begin{equation}h(m_1\,\lambda,\nu) m_1^2=\frac{1}{2}.\label{eq:h-eq}\end{equation}
Its solution $m_1(\lambda,\nu)$  is enough to write down explicitly the simple $G$-tube $$\Theta(g,\nu,\lambda)=(g\exp(m_1(\lambda,\nu)\lambda),\Ad_{\exp(m_1(\lambda,\nu)\lambda)}(\nu+\mu)).$$ In the following lemmas we will see that under some algebraic assumptions on $\g$ we can write down $m_1$ in terms of elementary functions.

\begin{lem}
Assume that the subspace $\q$ defined by the splitting $\g=\g_\mu\oplus \q$ is a 2-dimensional subalgebra.
Then the equation \eqref{eq:h-eq} has the solution $m_1(\nu,\lambda)=\mathcal{E}(-\tr(\ad_\lambda\rrestr{\q}))$ where $\mathcal{E}:\R\to \R^+$ the unique analytic function that satisfies
\begin{equation}e^{-x\mathcal{E}(x)}=1-x\mathcal{E}(x)+\frac{x^2}{2}. \label{eq:Efunction}\end{equation}
\label{lem:sl2-parabolic}
\end{lem}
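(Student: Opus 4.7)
The plan is to exploit the fact that a 2-dimensional subalgebra $\q$ trivializes the series defining $M(\lambda)$. Since $\lambda\in\ker(\ad_\lambda|_\q)$ and $\dim\q=2$, the endomorphism $\ad_\lambda|_\q$ has rank at most one, with characteristic polynomial $t^2-xt$ where $x:=\tr(\ad_\lambda|_\q)$. Cayley--Hamilton then gives $\ad_\lambda^2|_\q = x\,\ad_\lambda|_\q$, whence $\ad_\lambda^n|_\q = x^{n-1}\ad_\lambda|_\q$ for every $n\ge 1$. Using this I would sum the defining series of $M$ on $\q$ in closed form:
\begin{equation*}
M(\lambda)\dot\lambda \;=\; \dot\lambda + \sum_{n\ge 1}\frac{x^{n-1}}{(n+1)!}\,[\lambda,\dot\lambda] \;=\; \dot\lambda + \frac{e^x-1-x}{x^2}\,[\lambda,\dot\lambda],\qquad \dot\lambda\in\q,
\end{equation*}
with the value at $x=0$ interpreted by continuity as the coefficient $1/2$.

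Next I would compute $h$ explicitly. The splitting $\g=\g_\mu\oplus\q$ identifies $\nu\in\g_\mu^*$ with the element of $\g^*$ that annihilates $\q$; since $\q$ is a subalgebra we also have $[\lambda,\dot\lambda]\in\q$, so both $\langle\nu,\dot\lambda\rangle$ and $\langle\nu,[\lambda,\dot\lambda]\rangle$ vanish. Substituting the closed form for $M$ into the defining relation for $h$ collapses everything to
\begin{equation*}
\langle\mu+\nu,M(\lambda)\dot\lambda\rangle-\langle\mu,\dot\lambda\rangle \;=\; \frac{e^x-1-x}{x^2}\,\langle\mu,[\lambda,\dot\lambda]\rangle,
\end{equation*}
so $h(\lambda,\nu)=(e^x-1-x)/x^2$ depends only on $x=\tr(\ad_\lambda|_\q)$, not on $\nu$.

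To finish, I plug this into \eqref{eq:h-eq}. Setting $y:=m_1 x=\tr(\ad_{m_1\lambda}|_\q)$, the relation $h(m_1\lambda,\nu)\,m_1^2=1/2$ becomes
\begin{equation*}
\frac{e^y-1-y}{y^2}\,m_1^2 \;=\; \frac{1}{2} \quad\Longleftrightarrow\quad e^y \;=\; 1+y+\frac{x^2}{2},
\end{equation*}
and substituting $u=-x$ and $m_1=\mathcal{E}(u)$ (so that $y=-u\,\mathcal{E}(u)$) reproduces exactly the defining equation \eqref{eq:Efunction}. The remaining task is to verify that $\mathcal{E}$ is well defined as an analytic function near $0$: writing $\Psi(y):=y\sqrt{2(e^y-1-y)/y^2}$, which is analytic with $\Psi(0)=0$ and $\Psi'(0)=1$, the equation reads $\Psi(y)^2=x^2$, and the analytic inverse function theorem produces a unique analytic solution $y(x)$ with $y(0)=0$; the branch $\Psi(y)=x$ (rather than $-x$) is singled out by continuity with the trivial case $\lambda=0$, where $m_1=1$ forces $\mathcal{E}(0)=1>0$. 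The only real subtlety in the argument is this branch selection; the algebraic heart is the one-line Cayley--Hamilton identity for $\ad_\lambda|_\q$.
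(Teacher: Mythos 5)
Your proof is correct and follows essentially the same route as the paper: Cayley--Hamilton for the singular endomorphism $\ad_\lambda\rrestr{\q}$ (so $\ad_\lambda^2\rrestr{\q}=\tr(\ad_\lambda\rrestr{\q})\,\ad_\lambda\rrestr{\q}$), closed-form summation of $M(\lambda)$, identification of $h(\lambda,\nu)=(e^x-1-x)/x^2$, and substitution into \eqref{eq:h-eq} to recover the defining relation \eqref{eq:Efunction}. The extra details you supply --- the vanishing of the $\nu$-terms because $\q$ is a subalgebra and $\nu$ annihilates $\q$, and the analyticity/branch discussion for $\mathcal{E}$ --- are consistent with, and slightly more explicit than, the paper's argument.
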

\begin{proof}
As the dimension of $\q$ is two and $\ad_\xi$ is singular it follows that $\ad_\eta^2\rrestr{\q}-\tr(\ad_\eta\rrestr{\q})\ad_\eta\rrestr{\q}=0$ for any $\eta\in \q$. Therefore,
\[\sum_{k\ge 0} \frac{1}{(k+1)!} \ad_{\xi}^k=\operatorname{Id}+ \sum_{k\ge0}\frac{(\tr(\ad_\xi\rrestr{\q}))^k}{(k+2)!} \ad_\xi =\operatorname{Id}+\frac{e^x-x-1}{x^2}\ad_\xi\]
where $x=\tr(\ad_\xi\rrestr{\q}) $. Then \eqref{eq:oneform-preservation} becomes
\[\langle \mu+\nu,M(\lambda)\cdot \dot \lambda\rangle=\langle \mu,M(\lambda)\cdot \lambda\rangle=\langle \mu,\dot \lambda\rangle+\langle \mu,[\lambda,\dot \lambda]\rangle \frac{e^x-x-1}{x^2}.\]
Comparing with \eqref{eq:h-eq} we see that $h(\nu,\lambda)=\frac{e^{-x}+x-1}{x^2}$ with $x=-\tr(\ad_\xi\rrestr{\q})$.
\end{proof}

\begin{rem}
The function $\mathcal{E}$ can be written in terms of the \emph{Lambert W function} (see \cite{MR1414285})
\[\mathcal{E}(x)=\begin{cases} \frac{x}{2}+\frac{W_0(-\exp(-1-\frac12 x^2))+1}{x} & \text{if } x>0 \\
   \frac{x}{2}+\frac{W_{-1}(-\exp(-1-\frac12 x^2))+1}{x} & \text{if } x<0
  \end{cases}
\]
where $W_0$ and $W_{-1}$ are defined the same reference. It can be checked that $\mathcal{E}(x)$ is positive and strictly increasing for all $x\in \R$. Additionally $\mathcal{E}(x)$ is asymptotic to $\frac{x}{2}$ if $x\to \infty$, and satisfies $\mathcal{E}(0)=1$ and $\mathcal{E}(x)\to 0$ if $x\to -\infty$.
\end{rem}

\begin{lem}
Assume that the splitting $\g=\g_\mu\oplus \q$ satisfies
\begin{enumerate}
 \item $\ad^3_\xi+a(\xi)\ad_\xi=0$ $\forall \xi\in \q$ for a certain smooth function $a:\q\to \R$, and
 \item $\langle \mu+\nu,\ad_\xi^2\eta\rangle=0$ for any $\xi,\eta\in\q$ and $\nu\in\q^\circ$.
\end{enumerate}
In addition, let $b:(\g_\mu^*)_r\to \R$ be the function satisfying $\langle \nu+\mu,[\xi,\eta]\rangle=b(\nu)\langle \mu,[\xi,\eta]\rangle$ for any $\xi,\eta\in \q$. 
Then equation \eqref{eq:h-eq} has the solution $m_1(\nu,\lambda)=\mathcal{F}\left(\frac{a(\lambda)}{4b(\nu)}\right)\frac{1}{\sqrt{b(\lambda)}}$, where $\mathcal{F}:(-\infty,1)\to \R^+$ is the analytic function
\[\mathcal{F}(x)=\begin{cases}
   \frac{\arcsin(\sqrt x)}{\sqrt x} & \textrm{ if } x> 0 \\
   \frac{\operatorname{arcsinh} (\sqrt{|x|})}{\sqrt{|x|}} &\textrm{ if } x< 0
  \end{cases}
\]
\label{lem:so3-computations}
\end{lem}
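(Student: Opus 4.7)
The plan is to compute the function $h(\lambda,\nu)$ explicitly under the two hypotheses, then invert the resulting trigonometric (or hyperbolic) equation \eqref{eq:h-eq}. First I would expand the series $M(\lambda) = \sum_{n\ge 0}\frac{1}{(n+1)!}\ad_\lambda^n$. Hypothesis (1) gives $\ad_\lambda^3 = -a(\lambda)\ad_\lambda$, which by induction yields $\ad_\lambda^{2k-1} = (-a(\lambda))^{k-1}\ad_\lambda$ and $\ad_\lambda^{2k} = (-a(\lambda))^{k-1}\ad_\lambda^2$ for $k\ge 1$; splitting the series into odd and even indices and resumming then gives, when $a(\lambda)>0$,
$$M(\lambda) = \operatorname{Id} + \frac{1-\cos\sqrt{a(\lambda)}}{a(\lambda)}\,\ad_\lambda + \frac{\sqrt{a(\lambda)}-\sin\sqrt{a(\lambda)}}{a(\lambda)^{3/2}}\,\ad_\lambda^2,$$
with the obvious hyperbolic analogue when $a(\lambda)<0$.

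Next I pair with $\mu+\nu$ against a test vector $\dot\lambda\in\q$, identifying $\g_\mu^*\cong\q^\circ$ so that $\langle\nu,\dot\lambda\rangle=0$. The identity term contributes exactly $\langle\mu,\dot\lambda\rangle$, which cancels the subtracted term in the definition of $h$; hypothesis (2) kills the $\ad_\lambda^2$-term; and the middle term becomes $\tfrac{1-\cos\sqrt{a(\lambda)}}{a(\lambda)}\langle\mu+\nu,[\lambda,\dot\lambda]\rangle = \tfrac{1-\cos\sqrt{a(\lambda)}}{a(\lambda)}\,b(\nu)\,\langle\mu,[\lambda,\dot\lambda]\rangle$ by the defining property of $b$. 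Comparing coefficients of the nonzero linear form $\dot\lambda\mapsto\langle\mu,[\lambda,\dot\lambda]\rangle$ identifies $h(\lambda,\nu) = b(\nu)\,\tfrac{1-\cos\sqrt{a(\lambda)}}{a(\lambda)}$. Then I would observe that hypothesis (1) forces the degree-$2$ homogeneity $a(c\lambda) = c^2 a(\lambda)$ (from $c^3\ad_\lambda^3 = \ad_{c\lambda}^3 = -c\,a(c\lambda)\ad_\lambda$), so $a(m_1\lambda) = m_1^2 a(\lambda)$. Substituting into \eqref{eq:h-eq} the factor $m_1^2$ cancels outside the cosine and one obtains $1-\cos(m_1\sqrt{a(\lambda)}) = a(\lambda)/(2b(\nu))$; the half-angle identity together with the substitution $u := a(\lambda)/(4b(\nu))$, $\sqrt{a(\lambda)} = 2\sqrt{u}\sqrt{b(\nu)}$, then yields $m_1 = \arcsin(\sqrt{u})/(\sqrt{u}\,\sqrt{b(\nu)}) = \mathcal{F}(u)/\sqrt{b(\nu)}$. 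The $a(\lambda)<0$ branch runs identically with $\cosh$/$\sinh$ in place of $\cos$/$\sin$, producing the second piece of the piecewise $\mathcal{F}$.

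The delicate points I would verify are: (i) that the linear form $\dot\lambda\mapsto\langle\mu,[\lambda,\dot\lambda]\rangle$ does not vanish identically on $\q$, which is needed to read off $h$ as a scalar from the equality of two one-forms that share kernel $\mathbb{R}\cdot\lambda$, and this holds whenever $\lambda\notin\g_\mu$; and (ii) that $(\g_\mu^*)_r$ can be shrunk around $0$ so that $b(\nu)>0$ (automatic from $b(0)=1$) and $a(\lambda)/(4b(\nu))<1$, keeping the argument of $\mathcal{F}$ in its domain. Apart from these mild smallness checks the whole argument is a chain of algebraic simplifications; the main bookkeeping challenge is the clean handling of the sign of $a(\lambda)$ when switching between the trigonometric and hyperbolic branches.
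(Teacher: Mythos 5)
Your proposal is correct and follows essentially the same route as the paper: expand $M(\lambda)$ using $\ad_\lambda^3=-a(\lambda)\ad_\lambda$ to get $h(\lambda,\nu)=b(\nu)\frac{1-\cos\sqrt{a(\lambda)}}{a(\lambda)}$ (the $\ad_\lambda^2$ term being killed by hypothesis 2), then use the quadratic homogeneity of $a$ and the half-angle identity to invert equation \eqref{eq:h-eq}, with the hyperbolic branch handled analogously. Your explicit derivation of $a(c\lambda)=c^2a(\lambda)$ and the checks on $b(\nu)>0$ and the non-vanishing of $\dot\lambda\mapsto\langle\mu,[\lambda,\dot\lambda]\rangle$ are details the paper leaves implicit, but the argument is the same.
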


\begin{proof}
Using the first hypothesis $\displaystyle\sum_{n\ge 0} \frac{1}{(n+1)!} \ad_{\xi}^n=\Id+A_1(a(\xi))\ad_\xi+A_2(a(\xi))\ad_\xi^2$
where $A_1$ and $A_2$ are analytic scalar functions. Then
\[\langle \mu+\nu, M(\lambda)\cdot \dot \lambda)=\langle \mu+\nu,\dot \lambda\rangle +A_1(a(\lambda))\langle \mu+\nu,[\lambda,\dot \lambda]\rangle=\langle \mu,\dot \lambda\rangle +A_1(a(\lambda))b(\nu)\langle \mu,[\lambda,\dot \lambda]\rangle,\]
that is $h(\lambda,\nu)=A_1(a(\lambda))b(\nu)$. It can be checked that $A_1(x)=\frac{1-\cos(\sqrt x)}{x}$. If we assume $a(\lambda)>0$ then using simple formal manipulations \eqref{eq:h-eq} is equivalent to $m_1\sqrt{a(\lambda)}=\operatorname{arccos}\left(1-\frac{a(\lambda)}{2b(\nu)}\right)$ and as $2\operatorname{arcsin} x=\operatorname{arccos}(1-2x^2)$ then $m_1(\lambda,\nu)=\mathcal{F}\left(\frac{a(\lambda)}{4b(\nu)}\right)\frac{1}{\sqrt{b(\lambda)}}$. If $a(\lambda)\le 0$ a similar computation gives the same result.
\end{proof}

\subsection{$SO(3)$ simple tube}

Under the hat map the Lie algebra $\g=\mathfrak{so}(3)$ can be identified with $\R^3$ equipped with the cross product. The standard inner product $\langle\cdot,\cdot \rangle$ on $\R^3\cong \g$ will correspond to the dual pairing between $\g$ and $\g^*$, identifying them.

Fix an element $\mu\in \g^*$. We have two different possibilities:

\begin{itemize}
 \item $\mu=0$. In this case the $G$-tube is trivial (see Remark \ref{rem:isotropicmu}).
 \item $\mu\neq 0$. In this case  $\g_\mu$ is the subspace generated by $\mu$ and we will define $\q$ as the orthogonal complement to $\g_\mu$. The subspace $\g_\mu^*$ being the annihilator of $\q$ is also identified with the subspace generated by $\mu$.

The vector identity $\mathbf{a}\times (\mathbf{a}\times \mathbf{c})=\langle\mathbf{a},\mathbf{c}\rangle \mathbf{a}-\langle\mathbf{a},\mathbf{a}\rangle\mathbf{c}$ implies that both conditions of Lemma \ref{lem:so3-computations} hold for $\mathfrak{so}(3)$ with $a(\lambda)=\|\lambda\|^2$. Therefore the map
\begin{align}
G\times \g_\mu^*\times \q & \longrightarrow SO(3)\times \R^3\cong T^*SO(3) \label{eq:so3tube}\\
(g,\nu,\lambda) & \longmapsto (gE(\nu,\lambda),E(\nu,\lambda)\cdot (\nu+\mu)) \nonumber
\end{align}
with $E(\nu,\lambda)=\exp\left(2\frac{\arcsin\left(\sqrt{\frac{\mu}{\mu+\nu}}\frac{\|\lambda\|}{2}\right)}{\|\lambda\|} \widehat{\lambda} \right)$ is a simple $SO(3)$-tube at $(e,\mu)\in T^*SO(3)$.
\end{itemize}

Note that this expression is exactly the same as the one obtained in Theorem 3 of \cite{1311.7447}. In fact, this map was known in celestial mechanics as \emph{regularized Serret-Andoyer-Deprit coordinates} (see \cite{MR2222429} and references therein).

\subsection{$SL(2,\R)$ simple tube}

On the Lie algebra $\g=\mathfrak{sl}(2,\R)$ the bilinear form $\langle A,B\rangle=-2\text{Tr}(AB)$ is non-degenerate and we will use it to identify $\g$ and $\g^*$. If $\xi,\eta\in \mathfrak{sl}(2,\R)$ it can be checked that $\ad_\xi\ad_\xi \eta=\langle\xi,\eta\rangle\xi-\langle\xi,\xi\rangle\eta$ and then for any $\xi\in\g$ we have $\ad_\xi^3+\|\xi\|^2\ad_\xi=0$.

Fix an element $\mu\in \g^*$. We now have three different cases:

\begin{itemize}
 \item $\mu=0$. In this case the $G$-tube is trivial (see Remark \ref{rem:isotropicmu})
 \item $\|\mu\|^2:=\langle \mu,\mu\rangle\neq 0$. Then $\g_\mu$ is one dimensional and  is the space generated by $\mu$. We will define $\q$ to be the orthogonal space to $\mu$ with respect to the pairing. Since the norm of $\mu$ is non-zero $\g=\g_\mu\oplus \q$. As before $\g_\mu^*=\q^\circ=\g_\mu$ so we can apply Lemma \ref{lem:so3-computations} obtaining that \begin{align*}
G\times \g_\mu^*\times \q & \longrightarrow T^*SL(2,\R) \\
(g,\nu,\lambda) & \longmapsto (gE(\nu,\lambda),\Ad^*_{E(\nu,\lambda)} (\nu+\mu))
\end{align*}
with $E(\nu,\lambda)=\exp\left(\mathcal{F}\left(\frac{\|\lambda\|^2\mu}{4(\mu+\nu)}\right)\sqrt{\frac{\mu}{\mu+\nu}}\lambda\right)$ is a simple $SL(2,\R)$-tube at $(e,\mu)\in T^*SL(2,\R)$.

 \item $\|\mu\|^2=0$ and $\mu\neq 0$. In this case, using basic linear algebra, it can be shown that
 there is $k\in SL(2,\R)$ such that $\mu=k \begin{bmatrix} 0 & s \\ 0 & 0 \end{bmatrix} k^{-1}$ with $s=1$ or $s=-1$.

Also in this case $\g_\mu$ is the subspace generated by $\mu$,  and we will define $\q$ as the subspace generated by $k\begin{bmatrix} 1 & 0 \\ 0 & -1 \end{bmatrix}k^{-1}$ and $k\begin{bmatrix} 0 & 0 \\ 1 & 0 \end{bmatrix}k^{-1}$. A generic element in $\q$ will be represented as $k\begin{bmatrix} a & 0 \\ b & -a \end{bmatrix}k^{-1}$. It can be checked that $\g_\mu^*=\q^\circ$ is the subspace generated by $k\begin{bmatrix} 0 & 0 \\ 1 & 0 \end{bmatrix}k^{-1}$.

A simple computation shows that $\q$ is a subalgebra of $\g$ so we can apply Lemma \ref{lem:sl2-parabolic}, obtaining that the map
\begin{align*}
G\times \g_\mu^*\times \q & \longrightarrow T^*SL(2,\R) \\
(g,\nu\mu,\lambda) & \longmapsto \Big(gE(\lambda),\Ad^*_{E(\lambda)}\big( (\nu+1)\mu\big)\Big),
\end{align*}
where $\lambda=k\begin{bmatrix} a & 0 \\ b & -a \end{bmatrix}k^{-1}$, $\nu\in \R$ and $E(\lambda)=\exp\left(\mathcal{E}(2a) \lambda \right)$, %because $\tr(\ad_\lambda\rrestr{\q})=-2a$,
 is a simple $SL(2,\R)$-tube at $(e,\mu)\in T^*SL(2,\R)$.
 Note that for this tube the domain is the whole space $G\times \g_\mu^*\times \q$. There are no restrictions on  $\nu$ or $\lambda$ but the map is not onto.

\end{itemize}

\subsection{A $SO(3)$ restricted tube}
Let $H$ be a compact non-discrete subgroup of $SO(3)$. Note that $H$ must be one-dimensional. We will denote by $\xi_\h\in \R^3$ the generator of $\h$ with unit norm. In this setting the adapted splitting of Proposition \ref{prop:liealgsplitting} reduces to $\l=\R\cdot\xi_\h$, $\p=\R\cdot \mu$ and $\n=\R\cdot \xi_\h\times \mu $. To obtain the restricted tube we will use \eqref{eq:implicitPhi}, so we neeed to find $\zeta\in \n$ satisfying the condition
\begin{equation}\J_R(\Theta(g,\nu,\zeta)\rrestr{\l}=-\eps \label{eq:cond1}\end{equation} as a function of $\nu$ and $\eps$. Using the notation of \eqref{eq:simpleGtube-E}, $\Theta$ can be written as $$\Theta(g,\nu,\lambda)=(gE(\nu,\lambda),\Ad_{E(\nu,\lambda)}^*(\nu+\mu)).$$ Using Proposition \ref{prop:T*G} we can rewrite \eqref{eq:cond1} as% the condition for $\zeta$ becomes
\begin{equation}\Ad^*_{E(\nu,\zeta)}(\nu+\mu)\rrestr{\l}=\eps. \label{eq:cond2} \end{equation}
Applying the explicit expression \eqref{eq:so3tube} for the $SO(3)$ simple tube we have $E(\nu,\zeta)=\exp(\rho(\nu,\zeta) \frac{\xi_\h\times \mu}{\|\xi_\h\times \mu\|})$. Then, solving \eqref{eq:cond2} is equivalent to finding the real parameter $\rho$ as a function of $\nu$ and $\eps$ that satisfies

\[\left\langle \exp\Big(-\rho\frac{\xi_\h\times \mu}{\|\xi_\h\times \mu\|}\Big)\cdot (\nu+\mu),\xi_\h\right\rangle = \langle \eps,\xi_\h\rangle.\]
Since $\{\xi_\h,\frac{\mu}{\|\mu\|},\frac{\xi_\h\times \mu}{\|\xi_\h\times \mu\|} \}$ is an orthogonal basis this last equation is equivalent to
$$\langle\sin(\rho)(\nu+\mu), \frac{\mu}{\|\mu\|}\rangle=\langle\eps, \xi_\h\rangle.$$  Therefore, if we denote by $r$ the expression $\arcsin\frac{(\eps\cdot \xi_\h)\|\mu\|}{(\nu+\mu)\cdot \mu}$, the equation
\begin{equation}
\Phi(g,\nu;\eps)=\left(g\exp\left(r \frac{\xi_\h\times \mu}{\|\xi_\h\times \mu\|}\right),\exp\left(-r \frac{\xi_\h\times \mu}{\|\xi_\h\times \mu\|}\right)\cdot (\nu+\mu)\right)\in SO(3)\times \R^3 \label{eq:restrso3tube}
\end{equation}
defines a a restricted $SO(3)$-tube.

\subsection{Hamiltonian tube for $SO(3)$ acting on $T^*\R^3$}
\label{sec:so3onr3}

Consider the natural action of $SO(3)$ on $\R^3$ and fix a point $z=(q,p)\in T^*\R^3$. If $\mu=q\times p=0$ then \cite{MR2293645} provides an explicit computation of the Hamiltonian tube centered at $(q,p)$. Therefore, we will assume $\mu=q\times p\neq 0$, and in particular $q\neq 0$ so the isotropy $H:=G_q$ is the group of rotations with axis $q$. The linear slice $S=(\g\cdot q)^\perp$ is the subspace generated by $q$, and note that this subspace is fixed by $H$. As $\mu$ and $q$ are perpendicular the groups $H_\mu$ and $G_z$ are trivial. Under the identification of $\mathfrak{so}(3)$ with $\R^3$ the linear splitting of Proposition \ref{prop:liealgsplitting} becomes $\l=\R\cdot q$, $\n=\R\cdot (\mu \times q)$ and $\g_\mu=\R\cdot \mu$. Recall that$\alpha:=z\rrestr{S}\in S^*$ (see Theorem \ref{thm:symplnorm-cotang}). In this setting $\alpha=\frac{-\mu\times q}{\|q\|^2}$.

 Theorem \ref{thm:tubegeneral} together with the explicit expression for the restricted tube \eqref{eq:restrso3tube} give that
\begin{align*}
G\times\g_\mu^*\times T^*S &\longrightarrow T^*(SO(3)\times_H S)\\
(g,\nu,a,b) & \longmapsto \varphi(g,\nu+\mu, a, b+\alpha)
\end{align*}
is a Hamiltonian tube at $\varphi(e,\mu,0,\alpha)\in T^*(SO(3)\times_H S)$. In this case the parameter $\eps=a\diamond_\l b$ always vanishes because $S$ is fixed by the whole group $H$.

Let $S_r:=\{a \in \R^3 \mid \langle a,q\rangle=0,\  \|a\|<\|q\|\}\subset S$. It is a standard fact that the map $\mathbf{s}:G\times_H S_r\to \R^3$ defined by $[g,a]_H\mapsto g\cdot (q+a)$ is a Palais' tube around $q$. Using this  tube, after some easy manipulations the previous Hamiltonian tube at $(q,p)$ can be written as
\begin{align*}
\mathcal{T}:G\times_{\Id}\left( \g_\mu^*\times S_r \times S^*\right) &\longrightarrow \R^3\times \R^3 \cong T^*\R^3 \\
(g,\nu,a,b) &\longmapsto \Bigg(g\cdot (q+a),g\cdot \Big( (\nu+\mu)\times \frac{q+a}{\|q+a\|^2}+b+\underbrace{\frac{-\mu\times q}{\|q\|^2}}_\alpha\Big)\Bigg).
\end{align*}

\clearpage

%%%%%%%%%%%%%
% REFERENCES
%%%%%%%%%%%%%

%\addcontentsline{toc}{section}{References}

% \bibliography{bibliography}

%\bibliographystyle{alpha}
% \bibliographystyle{abbrv}
% \bibliography{../../Bibliografia/tfm2}

\begin{thebibliography}{10}

\bibitem{MR515141}
R.~Abraham and J.~Marsden.
\newblock {\em Foundations of mechanics}.
\newblock Benjamin-Cummings, New York, 1978.
\newblock Second edition.

\bibitem{MR1486529}
L.~Bates and E.~Lerman.
\newblock Proper group actions and symplectic stratified spaces.
\newblock {\em Pacific J. Math.}, 181(2):201--229, 1997.

\bibitem{MR2222429}
G.~Benettin, F.~Fass{\`o}, and M.~Guzzo.
\newblock Long term stability of proper rotations and local chaotic motions in
  the perturbed {E}uler rigid body.
\newblock {\em Regul. Chaotic Dyn.}, 11(1):1--17, 2006.

\bibitem{MR1414285}
R.~M. Corless, G.~H. Gonnet, D.~E.~G. Hare, D.~J. Jeffrey, and D.~E. Knuth.
\newblock On the {L}ambert {$W$} function.
\newblock {\em Adv. Comput. Math.}, 5(4):329--359, 1996.

\bibitem{duistermaat2000lie}
J.~J. Duistermaat and J.~A. Kolk.
\newblock {\em Lie groups}.
\newblock Springer, Berlin, 2000.

\bibitem{guillemin1984normal}
V.~Guillemin and S.~Sternberg.
\newblock A normal form for the moment map.
\newblock In {\em Differential geometric methods in mathematical physics
  ({J}erusalem, 1982)}, volume~6 of {\em Math. Phys. Stud.}, pages 161--175.
  Reidel, Dordrecht, 1984.

\bibitem{MR1627802}
D.~D. Holm, J.~E. Marsden, and T.~S. Ratiu.
\newblock The {E}uler-{P}oincar\'e equations and semidirect products with
  applications to continuum theories.
\newblock {\em Adv. Math.}, 137(1):1--81, 1998.

\bibitem{MR1660367}
E.~Lerman and S.~F. Singer.
\newblock Stability and persistence of relative equilibria at singular values
  of the moment map.
\newblock {\em Nonlinearity}, 11(6):1637--1649, 1998.

\bibitem{MR1809496}
E.~Lerman and C.~Willett.
\newblock The topological structure of contact and symplectic quotients.
\newblock {\em Internat. Math. Res. Notices}, (1):33--52, 2001.

\bibitem{MR1150395}
D.~Lewis.
\newblock Lagrangian block diagonalization.
\newblock {\em J. Dynam. Differential Equations}, 4(1):1--41, 1992.

\bibitem{MR859857}
C.-M. Marle.
\newblock Mod\`ele d'action hamiltonienne d'un groupe de {L}ie sur une
  vari\'et\'e symplectique.
\newblock {\em Rend. Sem. Mat. Univ. Politec. Torino}, 43(2):227--251, 1985.

\bibitem{MR0402819}
J.~Marsden and A.~Weinstein.
\newblock Reduction of symplectic manifolds with symmetry.
\newblock {\em Rep. Mathematical Phys.}, 5(1):121--130, 1974.

\bibitem{montaldi1997persistence}
J.~Montaldi.
\newblock Persistence and stability of relative equilibria.
\newblock {\em Nonlinearity}, 10:449--466, 1997.

\bibitem{MR1656378}
J.~Montaldi and R.~M. Roberts.
\newblock Relative equilibria of molecules.
\newblock {\em J. Nonlinear Sci.}, 9(1):53--88, 1999.

\bibitem{MR946385}
J.~Montaldi, R.~M. Roberts, and I.~N. Stewart.
\newblock Periodic solutions near equilibria of symmetric {H}amiltonian
  systems.
\newblock {\em Philos. Trans. Roy. Soc. London Ser. A}, 325(1584):237--293,
  1988.

\bibitem{MR1958531}
J.~Montaldi and T.~Tokieda.
\newblock Openness of momentum maps and persistence of extremal relative
  equilibria.
\newblock {\em Topology}, 42(4):833--844, 2003.

\bibitem{MR1614032}
J.-P. Ortega and T.~S. Ratiu.
\newblock Persistence and smoothness of critical relative elements in
  {H}amiltonian systems with symmetry.
\newblock {\em C. R. Acad. Sci. Paris S\'er. I Math.}, 325(10):1107--1111,
  1997.

\bibitem{MR1690200}
J.-P. Ortega and T.~S. Ratiu.
\newblock Stability of {H}amiltonian relative equilibria.
\newblock {\em Nonlinearity}, 12(3):693--720, 1999.

\bibitem{ortegaratiu}
J.-P. Ortega and T.~S. Ratiu.
\newblock {\em Momentum Maps and Hamiltonian Reduction}.
\newblock Birkh\"auser, 2004.

\bibitem{MR0126506}
R.~S. Palais.
\newblock On the existence of slices for actions of non-compact {L}ie groups.
\newblock {\em Ann. of Math.}, 73:295--323, 1961.

\bibitem{patrick1995relative}
G.~Patrick.
\newblock Relative equilibria of hamiltonian systems with symmetry:
  linearization, smoothness, and drift.
\newblock {\em J. Nonlinear Sci.}, 5(5):373--418, 1995.

\bibitem{MR2271206}
M.~Perlmutter, M.~Rodr{\'{\i}}guez-Olmos, and M.~E. Sousa-Dias.
\newblock On the geometry of reduced cotangent bundles at zero momentum.
\newblock {\em J. Geom. Phys.}, 57(2):571--596, 2007.

\bibitem{MR2421706}
M.~Perlmutter, M.~Rodr{\'{\i}}guez-Olmos, and M.~E. Sousa-Dias.
\newblock The symplectic normal space of a cotangent-lifted action.
\newblock {\em Differential Geom. Appl.}, 26(3):277--297, 2008.

\bibitem{MR1885680}
M.~Roberts, C.~Wulff, and J.~S.~W. Lamb.
\newblock Hamiltonian systems near relative equilibria.
\newblock {\em J. Differential Equations}, 179(2):562--604, 2002.

\bibitem{MR1483562}
R.~M. Roberts and M.~E. Sousa-Dias.
\newblock Bifurcations from relative equilibria of {H}amiltonian systems.
\newblock {\em Nonlinearity}, 10(6):1719--1738, 1997.

\bibitem{MR0448428}
W.~J. Satzer, Jr.
\newblock Canonical reduction of mechanical systems invariant under abelian
  group actions with an application to celestial mechanics.
\newblock {\em Indiana Univ. Math. J.}, 26(5):951--976, 1977.

\bibitem{phdschmah}
T.~Schmah.
\newblock {\em Symmetries of Cotangent Bundles}.
\newblock PhD thesis, Ecole Polytechnique Fédérale de Lausane, 2001.

\bibitem{MR2293645}
T.~Schmah.
\newblock A cotangent bundle slice theorem.
\newblock {\em Differential Geom. Appl.}, 25(1):101--124, 2007.

\bibitem{1311.7447}
T.~Schmah and C.~Stoica.
\newblock Normal forms for lie symmetric cotangent bundle systems with free and
  proper actions.
\newblock {\em Preprint \url{http://arxiv.org/abs/1311.7447}}, 2013.

\bibitem{simo1991stability}
J.~Simo, D.~Lewis, and J.~Marsden.
\newblock Stability of relative equilibria. part i: The reduced energy-momentum
  method.
\newblock {\em Arch. Ration. Mech. An.}, 115(1):15--59, 1991.

\bibitem{MR1127479}
R.~Sjamaar and E.~Lerman.
\newblock Stratified symplectic spaces and reduction.
\newblock {\em Ann. of Math.}, 134(2):375--422, 1991.

\end{thebibliography}

\end{document}